\newtheorem{theorem}{Theorem}
\newtheorem{lemma}{Lemma} 
\newtheorem{fact}[lemma]{Fact}
\newtheorem{claim}[lemma]{Claim}
\theoremstyle{definition}
\newtheorem{example}[lemma]{Example}
\def\Z{{\mathbb{Z}}} 
\def\N{{\mathbb{N}}} 
\def\cA{{\mathcal A}}
\def\cB{{\mathcal B}}
\def\cF{{\mathcal F}}
\def\cG{{\mathcal G}}
\def\cW{{\mathcal W}}
\def\dual{{{\rm dual}_t}}
\def\shiftsto{\to}
\begin{document}
\title
{AK-type stability theorems on cross $t$-intersecting families}
\thanks{{This is a  preprint version of the paper appearing in European J. Combinat. 82 (2019) 102993.  \url{https://doi.org/10.1016/j.ejc.2019.07.004}}}
\author{Sang June Lee}
\address{Department of Mathematics, Kyung Hee University, Seoul 02447, South Korea}
\email{sjlee242@khu.ac.kr}
\thanks{The first author was supported by Korea Electric Power Corporation (Grant number:R18XA01) and by Basic Science Research Program through the National Research Foundation of Korea (NRF) funded by the Ministry of Education (NRF-2016R1D1A1B03933404 and NRF-2019R1F1A1058860). The second author is supported by Korean NRF Basic Science Research Program (NRF-2018R1D1A1A09083741) and by the  Kyungpook National University Research Fund. The last author was supported by JSPS KAKENHI 25287031, 18K03399}
\author{Mark Siggers}
\address{College of Natural Sciences, Kyungpook National University, Daegu 702-701, South Korea}
\email{mhsiggers@knu.ac.kr}
\author{Norihide Tokushige}
\address{College of Education, Ryukyu University, Nishihara, Okinawa 903-0213, Japan}
\email{hide@edu.u-ryukyu.ac.jp}
\keywords{Cross intersecting families; Erd\H{o}s-Ko-Rado theorem; Ahlswede-Khachatrian theorem; Shifting; Random walks}

\begin{abstract}
  Two families, ${\mathcal A}$ and ${\mathcal B}$, of subsets of $[n]$ are
  cross $t$-intersecting if for every $A \in {\mathcal A}$ and $B \in {\mathcal B}$,
  $A$ and $B$ intersect in at least $t$ elements. 
  For a real number $p$ and a family ${\mathcal A}$ the product measure $\mu_p ({\mathcal A})$
  is defined as the sum of $p^{|A|}(1-p)^{n-|A|}$  over all
  $A\in{\mathcal A}$.
  For every non-negative integer $r$, and for large enough $t$, we determine, for any $p$ satisfying
  $\frac r{t+2r-1}\leq p\leq\frac{r+1}{t+2r+1}$, the maximum possible value of
  $\mu_p ({\mathcal A})\mu_p ({\mathcal B})$ for cross $t$-intersecting families ${\mathcal A}$ and ${\mathcal B}$.
  In this paper we prove a stronger stability result which yields the
above result.
\end{abstract}

\maketitle
\section{Introduction}
Let $n\geq t$ be positive integers. Let $[n]:=\{1,2,\ldots,n\}$ and 
$2^{[n]}:=\{F:F\subset[n]\}$.
A family of subsets $\cA\subset 2^{[n]}$ is called \emph{$t$-intersecting} 
if $|A\cap A'|\geq t$ for all $A,A'\in\cA$.
For any real number $p\in(0,1)$ and a family $\cA\subset 2^{[n]}$, 
we define the \emph{product measure} $\mu_p $ of $\cA$ by 
\[
 \mu_p (\cA):=\sum_{A\in\cA}p^{|A|}(1-p)^{n-|A|}.
\]
What is the maximum product measure of $t$-intersecting families? 
To answer this question, let $r$ be a non-negative integer and let
\[
 \cF_r^t:=\{F\subset[n]:|F\cap[t+2r]|\geq t+r\}. 
\]
The family $\cF_r^t$ is $t$-intersecting since $|F\cap F'\cap[t+2r]|\geq t$
for all $F,F'\in\cF^t_r$. Two families $\cA,\cA'\subset 2^{[n]}$
are isomorphic, denoted by $\cA\cong\cA'$, if 
$\cA'=\{\{\pi(a):a\in A\}:A\in\cA\}$, where $\pi$ is a permutation on $[n]$.
Answering a conjecture of Frankl, and extending partial results by Frankl and F\"uredi in \cite{FF},
the following result is essentially proved in \cite{AK-p} by Ahlswede and
Khachatrian,  see also \cite{BE,DS,Filmus,Tuvsw}.

\begin{theorem}
If $\cA\subset 2^{[n]}$ is $t$-intersecting, then
\[
 \mu_p (\cA)\leq\max_{0\leq r\leq \frac{n-t}2}\mu_p (\cF^t_r).
\]
Moreover, equality holds if and only if $\cA\cong\cF^t_r$ for some $r$.
\end{theorem}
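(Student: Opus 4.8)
The plan is to reduce to shifted families, identify the ``support'' of $\cA$ by a random-walk argument, and then invoke the quantitative core of the Ahlswede--Khachatrian theorem to turn this into the measure bound. For $i<j$, the shift $S_{ij}$ replaces each $A\in\cA$ with $j\in A$, $i\notin A$ and $(A\setminus\{j\})\cup\{i\}\notin\cA$ by $(A\setminus\{j\})\cup\{i\}$; this leaves every cardinality, hence $\mu_p(\cA)$, unchanged, preserves the $t$-intersecting property, and strictly decreases $\sum_{A\in\cA}\sum_{a\in A}a$ whenever it alters $\cA$. Iterating, we may assume $\cA$ is shifted. (For the equality statement we will need, at the very end, the standard ``reversing the shift'' lemma: if $S_{ij}\cA$ is isomorphic to $\cF_r^t$ and $\cA$ is extremal, then $\cA\cong\cF_r^t$; I postpone this.)

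Next, to $A\subseteq[n]$ associate the lattice path whose $m$-th step is up if $m\in A$ and down otherwise, so that its height after $m$ steps equals $|A\cap[m]|-|[m]\setminus A|$. By Frankl's lemma, in a shifted $t$-intersecting family the walk of every $A\in\cA$ attains height $t$; equivalently $\cA\subseteq\bigcup_{r\ge 0}\cF_r^t$. Moreover $\mu_p(\cF_r^t)$ is exactly the probability that the $p$-biased walk (up-step with probability $p$) stands at height $\ge t$ after $t+2r$ steps, i.e. $\Pr[\operatorname{Bin}(t+2r,p)\ge t+r]$.

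Mere containment in $\bigcup_r\cF_r^t$ is too weak — its $\mu_p$-measure strictly exceeds $\max_r\mu_p(\cF_r^t)$ — so the $t$-intersecting hypothesis must be exploited quantitatively, and this is where the real work lies. I would run the Ahlswede--Khachatrian recursion by induction: splitting $\cA$ according to whether $n\in A$ gives $\mu_p(\cA)=(1-p)\mu_p(\cA_0)+p\,\mu_p(\cA_1)$ with $\cA_0,\cA_1\subseteq 2^{[n-1]}$ shifted, where $\cA_0$ is $t$-intersecting, $\cA_1$ is $(t-1)$-intersecting, and $(\cA_0,\cA_1)$ is cross $t$-intersecting; applying the inductive bounds for parameters $t$ and $t-1$ and optimizing the resulting expression over $p$ returns $\max_{0\le r\le(n-t)/2}\mu_p(\cF_r^t)$, with the optimal $r$ increasing in $p$ and switching from $\cF_r^t$ to $\cF_{r+1}^t$ at the value where the two tie. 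An alternative that sidesteps the induction: for rational $p$, blow each point of $[n]$ into a large cloud and pad, obtaining a $k$-uniform $t$-intersecting family of normalized size tending to $\mu_p(\cA)$, bound it by the uniform complete-intersection theorem of Ahlswede--Khachatrian applied to the blow-up of $\cF_r^t$, and pass to general $p$ by continuity. The main obstacle is precisely this step: it is in effect the full Ahlswede--Khachatrian theorem, and no shortcut delivers the exact extremal value rather than merely an upper bound that is too large. For uniqueness one analyses the equality case here — it forces the shifted $\cA$ to be exactly some $\cF_r^t$ — and then applies the ``reversing the shift'' lemma noted above.
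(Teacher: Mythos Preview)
The paper does not prove this theorem: it is stated as background (Theorem~1) and attributed to Ahlswede and Khachatrian \cite{AK-p}, with further references \cite{BE,DS,Filmus,Tuvsw}. There is no ``paper's own proof'' to compare your proposal against; the authors simply quote the result and build on it.

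As for your sketch itself: the reduction to shifted families and the random-walk containment $\cA\subseteq\bigcup_r\cF_r^t$ are correct and standard, and you rightly observe that this containment alone is too weak. But then your two proposed routes are both, as you yourself admit, essentially the full Ahlswede--Khachatrian theorem in disguise: the inductive splitting into $\cA_0,\cA_1$ with a cross-$t$-intersecting constraint is the AK recursion, and the blow-up argument invokes the uniform complete intersection theorem, which is AK. So what you have written is not a proof but an honest acknowledgment that the statement \emph{is} the AK theorem and that you would need to reproduce one of its known proofs. That is an accurate assessment of the situation, but it is not an independent argument. If the intent was to supply a proof the paper omits, you would need to actually carry out one of those two programs in full; if the intent was merely to situate the result, a citation (as the paper does) suffices.
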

Grouping the subsets in the family $\cF^t_r$ according to the size of
their intersection with $[t+2r]$ we see, where $q = 1-p$, 
\begin{equation}\label{weight cFtr1}
  \mu_p (\cF_r^t) = \sum_{i = 0}^r \binom{t+2r}{i}p^{t + 2r - i}q^i.
\end{equation}  
By comparing $\mu_p (\cF^t_{r+1}\setminus\cF^t_r) = \binom{t+2r}{r+1} p^{t+r+1}q^{r+1}$ and 
$\mu_p (\cF^t_r\setminus\cF^t_{r+1})= \binom{t+2r}r p^{t+r}q^{r+2}$, one sees that 
$\mu_p (\cF^t_{r+1})-\mu_p (\cF^t_{r})$ is positive, $0$, negative 
if and only if $p-\frac{r+1}{t+2r+1}$ is positive, $0$, negative, respectively.
In particular, if 
\begin{equation}\label{def:p range}
 \frac r{t+2r-1}\leq p\leq \frac{r+1}{t+2r+1}, 
\end{equation}
then 
\[
 \mu_p (\cF^t_0)< \mu_p (\cF^t_1)<\cdots< \mu_p (\cF^t_{r-1})\leq
\mu_p (\cF^t_{r})\geq\mu_p (\cF^t_{r+1})>\mu_p (\cF^t_{r+2})>\cdots.
\]
Thus the Ahlswede--Khachatrian theorem says that the maximum product measure
of $t$-intersecting families is given by $\mu_p (\cF^t_r)$ provided that $t,p,$ and $r$ satisfy \eqref{def:p range}.

We extend this result to two families in $2^{[n]}$. Two families 
$\cA,\cB\subset 2^{[n]}$ are \emph{cross $t$-intersecting} if $|A\cap B|\geq t$ 
for all $A\in\cA$ and $B\in\cB$.
In this case, it is conjectured in \cite{LST} that 
\begin{equation}\label{eq:conjecture}\mu_p (\cA)\mu_p (\cB)\leq\mu_p (\cF^t_r)^2\end{equation} under the assumption of 
\eqref{def:p range}. The inequality~\eqref{eq:conjecture} was proved for $r=0$ and $n\geq t\geq 14$ in~\cite{FLST} and for $r=1$ and $n\geq t\geq 200$ in~\cite{LST}. We also mention that Borg obtained related results in 
\cite{Borg14,Borg16,Borg17}.

In this paper, using the random-walk method that was introduced by Frankl in \cite{Fr78,Fr87}, we verify
that this conjecture holds for every fixed $r$ and large $t$.
That is we prove the following, referring to \cite{FLST} for the case $r=0$.

\begin{theorem}\label{thm:inequality}
  For every integer $r\geq 0$, there exists an integer $ t_0 = t_0 (r)$, depending only on $r$,
  such that for all $n\geq t\geq  t_0 (r)$ and all $p$ with
  $\frac {r}{t+2r-1}\leq p\leq \frac {r+1}{t+2r+1}$, the following
  holds. 
If $\cA, \cB \subset 2^{[n]}$ are cross $t$-intersecting, then
\begin{align}\label{eq:main ineq}
 \mu_p (\cA)\mu_p (\cB) &\leq \mu_p (\cF^t_r)^2.
\end{align}
Moreover, equality holds if and only if one of the following holds:

\begin{enumerate}
\item $\cA=\cB\cong \cF^t_{r-1}$ and $p=\dfrac{r}{t+2r-1}$,
\item $\cA=\cB\cong \cF^t_r$ and $\dfrac{r}{t+2r-1}\leq p\leq \dfrac{r+1}{t+2r+1}$, or
\item $\cA=\cB\cong \cF^t_{r+1}$ and $p=\dfrac{r+1}{t+2r+1}$.
\end{enumerate}
\end{theorem}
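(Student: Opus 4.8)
The plan is to run the compression (shifting) method down to shifted up-sets, apply Frankl's random-walk encoding, and then establish a two-sided structural (stability) statement for shifted cross $t$-intersecting pairs from which \eqref{eq:main ineq} and the three equality cases follow after a one-parameter optimization.

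\emph{Step 1: reduction to shifted up-sets.} For $i<j$ the simultaneous shift $S_{ij}$ sends a cross $t$-intersecting pair $(\cA,\cB)$ to a cross $t$-intersecting pair (a standard lemma) and preserves $\mu_p(\cA)$ and $\mu_p(\cB)$ exactly, since it preserves all cardinalities; iterating terminates. Passing to upward closures likewise preserves cross $t$-intersection and does not decrease $\mu_p(\cA)\mu_p(\cB)$. So it suffices to prove \eqref{eq:main ineq} for shifted up-sets, and we may even take $\cB=\dual(\cA)$, the largest family cross $t$-intersecting with $\cA$ (whence also $\cA\subseteq\dual(\cB)$). Because compression preserves $\mu_p(\cA)$ and $\mu_p(\cB)$ exactly, a pair attaining equality in \eqref{eq:main ineq} is carried to a shifted pair still attaining equality; Steps 3--4 will show such a shifted extremal pair must be $(\cF^t_\ell,\cF^t_\ell)$ for some $\ell$, and a standard argument reversing the compressions (while keeping cross $t$-intersection) then shows the original pair is isomorphic to it.

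\emph{Step 2: the random-walk encoding.} Encode $F\subseteq[n]$ by the lattice path whose $i$-th step is $+1$ if $i\notin F$ and $-1$ if $i\in F$; under $\mu_p$ this is the simple random walk with up-probability $q=1-p$ and down-probability $p$, and since $p<\frac12$ throughout the range $\frac r{t+2r-1}\le p\le\frac{r+1}{t+2r+1}$ (valid once $t\ge 2$) it has positive drift $q-p$. In this language $\cF^t_\ell$ consists of the sets whose path makes at most $\ell$ up-steps among its first $t+2\ell$ steps, which is exactly \eqref{weight cFtr1}, and the positive drift lets one compute the $\mu_p$-measure of a shifted family from a bounded initial segment of the walk up to a geometrically small tail, all bounds being independent of $n$. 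We also record from the introduction that for $p$ in the stated range the map $\ell\mapsto\mu_p(\cF^t_\ell)$ is unimodal with maximum at $\ell=r$, tying with $\ell=r-1$ exactly when $p=\frac r{t+2r-1}$ and with $\ell=r+1$ exactly when $p=\frac{r+1}{t+2r+1}$.

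\emph{Step 3: the structural lemma (the crux).} The main work is to show that a shifted cross $t$-intersecting pair $(\cA,\dual(\cA))$ of up-sets is, up to a geometrically small error, governed by a single integer $\ell$: shiftedness lets one read off the ``profile'' of the minimal members of $\cA$, the condition $|A\cap B|\ge t$ translates into the statement that every path of $\dual(\cA)$ must descend to a level fixed by that profile before a controlled time, and symmetrically; the walk estimates of Step 2 then turn $\mu_p(\cA)$ and $\mu_p(\dual(\cA))$ into the quantities of \eqref{weight cFtr1} (plus errors), with the two ``descent depths'' forced to sum to roughly $2t$. Equivalently, pushing $\cA$ past $\cF^t_\ell$ on one side squeezes $\dual(\cA)$ into a correspondingly smaller $\cF^t$-type family on the other, so the product of the two walk probabilities becomes essentially a one-parameter function of how $t$ is split. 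Making this precise while keeping the errors negligible near the extremal configurations is the main obstacle; coping with large $r$ is exactly what forces $t\ge t_0(r)$, since the number of near-extremal profiles to exclude grows with $r$ while the gap between $\mu_p(\cF^t_r)^2$ and the next competitor shrinks.

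\emph{Step 4: optimization and equality.} Finally one maximizes the one-parameter product from Step 3: by the unimodality of Step 2 together with a log-concavity estimate for $(\mu_p(\cF^t_\ell))_\ell$, the maximum is at the balanced split and equals $\mu_p(\cF^t_r)^2$, while every other split and every pair not genuinely of the form $(\cF^t_\ell,\cF^t_\ell)$ loses a definite amount that, for $t\ge t_0(r)$, exceeds all accumulated errors. This gives \eqref{eq:main ineq}; tracing equality back through Steps 1--3 forces $\cA=\cB\cong\cF^t_\ell$ with $\ell\in\{r-1,r,r+1\}$, and the endpoint conditions on $p$ from Step 2 pick out precisely the cases (i), (ii), (iii).
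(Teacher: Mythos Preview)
Your outline matches the paper's architecture: shift to $t$-nice pairs, encode as walks, prove a structural/stability statement, then optimize. The difficulty is that your Step~3 is a description of what must be proved rather than a proof, and the framing you choose hides a case that does real work in the paper. You assert that a shifted pair is ``governed by a single integer $\ell$'' and then in Step~4 optimize over $\mu_p(\cF^t_\ell)^2$. But the paper's structural analysis produces \emph{two} parameters for each family: the line $y=x+u$ that $\cA$ hits (so $\cA\subset\cF^u$) and the index $s$ such that the part of $\cA$ not crossing $y=x+u+1$ sits inside $\cF^u_s$; similarly $v,s'$ for $\cB$. One gets $u+v=2t$ and $s-s'=(v-u)/2$, so besides the ``diagonal'' near-extremal pairs $(\cF^t_s,\cF^t_s)$ there are genuine ``off-diagonal'' competitors $(\cF^{t-1}_s,\cF^{t+1}_{s-1})$ with $s\in\{r,r+1\}$. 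These are not of the form $(\cF^t_\ell,\cF^t_\ell)$, so your unimodality/log-concavity argument in Step~4 does not touch them; the paper disposes of them by a separate explicit computation (its Claim~\ref{claim:2opt>pf+g/p}) showing $p\,\mu_p(\cF^{t-1}_s)+p^{-1}\mu_p(\cF^{t+1}_{s-1})<2\mu_p(\cF^t_r)$, which then feeds into AM--GM.

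A second, smaller gap: the ``geometrically small error'' you invoke in Step~3 is not uniformly available. In the non-extremal regime $(s,s')\notin R_{\rm ex}$ the paper only gets $\mu_p(\cA)\mu_p(\cB)\le\bigl(1-\tfrac{1}{r+2}\bigr)\mu_p(\cF^t_r)^2(1+o(1))$, a constant-factor (not geometric) gap, and this bound is tight. In the extremal regime the paper needs the specific walks $D^t_s(i)$ and their duals, together with the integer parameters $I,J$ recording how far along the $D$-sequence $\cA,\cB$ reach, to squeeze out the inequalities $\mu_p(\cB\setminus\cF^v_{s'})\ll\mu_p(\cF^u_s\setminus\cA)$, etc. None of this machinery is visible in your sketch. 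Finally, your reduction to $\cB=\dual(\cA)$ in Step~1 is not how the paper proceeds (and $\dual$ in the paper acts on sets, not families); the paper works with general $t$-nice pairs throughout and quotes an external lemma to pull the equality case back through shifting.
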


In this paper we do not attempt to optimize $ t_0 (r)$. 
We simplify calculations by assuming,  for each fixed $r$, that $t$ is sufficiently large.   
As such, when  we use asymptotic notation such
as $o(1)$, $O(f)$, or $f \ll g$, it is always asymptotic in $t$, with $r$ fixed,
and $p$ being some fixed proportion of the way through the range \eqref{def:p range}. 

We also consider the stability of extremal structures. Suppose that
$\cA,\cB\subset 2^{[n]}$ are cross $t$-intersecting families.
If condition \eqref{def:p range} is satisfied and $\mu_p (\cA)\mu_p (\cB)$ 
is close to the maximum value, then we can ask whether $\cA$ and $\cB$ are
close to (isomorphic copies of) the extremal families $\cF^t_{r-1}, \cF^t_{r}$,
or $\cF^t_{r+1}$, where we say that  two families $\cF$ and $\cG$ are close
if their symmetric difference 
$\cF\triangle\cG=(\cF\setminus\cG)\cup(\cG\setminus\cF)$ has small measure.
We are able to show that this is true if the
two families satisfy the additional condition of being `shifted.'

A family $\cA\subset 2^{[n]}$ is \emph{shifted} (sometimes called {\em compressed})
if   $A\in\cA$ and
$\{i,j\}\cap A=\{j\}$ for some $1\leq i<j\leq n$ imply
$(A\setminus\{j\})\cup\{i\}\in \cA$.
It is known (see, e.g., Lemma 2.3 in \cite{FLST}) that for any given
cross $t$-intersecting families $\cA,\cB\subset 2^{[n]}$ one can apply a
sequence of shifting operations to them and get shifted 
cross $t$-intersecting families $\cA',\cB'\subset 2^{[n]}$ such that
$\mu_p (\cA)=\mu_p (\cA')$ and $\mu_p (\cB)=\mu_p (\cB')$.
Notice that the definition of a shifted family depends on the ordering
of $[n]$, so an isomorphic copy of a shifted family is not
necessarily shifted in this sense.

A family $\cA$ is \emph{inclusion maximal} if $A\subset A'$ and $A\in\cA$
imply $A'\in\cA$ as well. Since we are interested in the maximum measure of cross $t$-intersecting families, we always assume that families are inclusion maximal.
It is not difficult to see that the property of
being inclusion maximal is invariant under shifting operations.

Two families $\cA$ and $\cB$ are \emph{$t$-nice} if they are shifted, 
inclusion maximal, and cross $t$-intersecting.
We obtain the following statement of stability.

\begin{theorem}\label{thm:weakstability}
For every integer $r\geq 0$ and all real numbers $\epsilon \in (0,1/2)$ and $C>2$, there exists an integer $ t_0 = t_0 (r,\epsilon, C)$ such that
for all $n\geq t\geq  t_0 $ 
the following holds.
Let 
$\frac {r+\epsilon}{t}\leq  p\leq \frac {r+1-\epsilon}{t}$, and let $\cA,\cB\subset 2^{[n]}$ be $t$-nice families.
If 
\begin{equation}\label{eq:AB_large(2)}\mu_p (\cA)\mu_p (\cB)\geq (1-\delta)\mu_p (\cF^t_r)^2\end{equation} with 
$\delta\in (0,\epsilon/(r+1))$, then
\begin{equation}\label{eq:<f(gamma)}
\mu_p (\cA\triangle\cF^t_r)+\mu_p (\cB\triangle\cF^t_r)\leq C\left(1-\sqrt{1-\delta}\right)\mu_p(\cF^t_r).
\end{equation}
\end{theorem}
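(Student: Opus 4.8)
The plan is to derive \eqref{eq:<f(gamma)} from two ingredients: the AM--GM inequality, which already produces the correct leading term $2(1-\sqrt{1-\delta})\mu_p(\cF^t_r)$, and a random-walk estimate controlling the ``excess'' of near-extremal $t$-nice families, which the surplus $C-2>0$ is there to absorb. Write $f=\mu_p(\cF^t_r)$, and for a $t$-nice pair $\cA,\cB$ set $d_\cA=\mu_p(\cF^t_r\setminus\cA)$ and $e_\cA=\mu_p(\cA\setminus\cF^t_r)$, so that $\mu_p(\cA)=f-d_\cA+e_\cA$ and $\mu_p(\cA\triangle\cF^t_r)=d_\cA+e_\cA$, and likewise for $\cB$; the quantity to be bounded is $S:=(d_\cA+e_\cA)+(d_\cB+e_\cB)$.

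From \eqref{eq:AB_large(2)} and $\mu_p(\cA)\mu_p(\cB)\le\bigl(\tfrac12(\mu_p(\cA)+\mu_p(\cB))\bigr)^2$ one gets $\mu_p(\cA)+\mu_p(\cB)\ge 2\sqrt{1-\delta}\,f$, that is,
\begin{equation}\label{eq:AMGM-plan}
  d_\cA+d_\cB\le 2(1-\sqrt{1-\delta})f+(e_\cA+e_\cB),
\end{equation}
whence $S\le 2(1-\sqrt{1-\delta})f+2(e_\cA+e_\cB)$. Since $C>2$, Theorem~\ref{thm:weakstability} will follow once we establish, for $t$ large in terms of $r,\epsilon,C$, the excess bound
\begin{equation}\label{eq:excess-plan}
  e_\cA+e_\cB\le\tfrac{C-2}{2}\,(1-\sqrt{1-\delta})f .
\end{equation}
Note this is where the two hypotheses will be used: the shrunken interval $\tfrac{r+\epsilon}{t}\le p\le\tfrac{r+1-\epsilon}{t}$ and the bound $\delta<\epsilon/(r+1)$ together rule out the ``whole level off'' configurations $\cA\cong\cB\cong\cF^t_{r\pm1}$, since by the discussion after \eqref{weight cFtr1} a direct computation shows $\mu_p(\cF^t_{r\pm1})/\mu_p(\cF^t_r)$ stays bounded away from $1$ on this interval, so such pairs violate \eqref{eq:AB_large(2)}.

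To prove \eqref{eq:excess-plan} I would argue in two stages. \emph{Qualitative stage:} under \eqref{eq:AB_large(2)} and the stated conditions no $t$-nice member of a near-extremal pair can contain a shifted-minimal set outside $\cF^t_r$ with only boundedly many elements beyond $[t+2r]$. The point is a measure estimate via \eqref{weight cFtr1}: for such a ``shallow'' set $A_*\in\cA\setminus\cF^t_r$ one has $|A_*|=t+O(1)$, $p=\Theta(1/t)$, and $\mu_p(\{B:|A_*\cap B|\ge t\})$ is a small enough multiple of $f$ that $\mu_p(\cA)\mu_p(\cB)\le\mu_p(\cA)\,\mu_p(\{B:|A_*\cap B|\ge t\})$ contradicts \eqref{eq:AB_large(2)} — once one knows $\mu_p(\cA)$ is not much larger than $f$, which is itself forced, since making $\mu_p(\cA)$ large requires large excess and (by the same computation applied to $\cB$) drives the product below $(1-\delta)f^2$. \emph{Quantitative stage:} the excess that survives comes only from ``deep'' shifted-minimal $A_*\notin\cF^t_r$ with many elements beyond $[t+2r]$, and for these a binomial/random-walk estimate gives that $\mu_p(\{B\in\cF^t_r:|A_*\cap B|<t\})$, the part of $\cF^t_r$ that $\cB$ must discard, is at least $(t-1)$ times the measure the shift-closure of $A_*$ contributes to $\cA\setminus\cF^t_r$ — because pushing an element of $A_*$ right costs a factor $p=\Theta(1/t)$ in the excess but only a factor $q=1-p$ in the surviving part of $\cB$. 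Summing gives $\mu_p(\cB)\le f-(t-1)e_\cA$, and with $\mu_p(\cA)\le f+e_\cA$ this yields
\[
  (1-\delta)f^2\le\mu_p(\cA)\mu_p(\cB)\le(f+e_\cA)\bigl(f-(t-1)e_\cA\bigr)\le f^2-(t-2)e_\cA f ,
\]
so $e_\cA\le\delta f/(t-2)$, and symmetrically $e_\cB\le\delta f/(t-2)$. Using $\delta\le 2(1-\sqrt{1-\delta})$ this gives $e_\cA+e_\cB\le\frac{4}{t-2}(1-\sqrt{1-\delta})f\le\frac{C-2}{2}(1-\sqrt{1-\delta})f$ once $t\ge 2+\frac{8}{C-2}$, proving \eqref{eq:excess-plan}; with \eqref{eq:AMGM-plan} we then get $S\le\bigl(2+\tfrac{4}{t-2}\bigr)(1-\sqrt{1-\delta})f\le C(1-\sqrt{1-\delta})f$.

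The main obstacle is the qualitative stage: ruling out shallow excess (and the level-off structure) in near-extremal $t$-nice pairs is an Ahlswede--Khachatrian-type stability statement for two families, and I expect it to absorb the bulk of the work, along the same random-walk lines that drive the proof of Theorem~\ref{thm:inequality}. Once that structural input is in place, the reduction to \eqref{eq:<f(gamma)} via AM--GM and the deep-excess estimate is short, and it makes transparent why the bound has the form $C(1-\sqrt{1-\delta})\mu_p(\cF^t_r)$ with $C>2$ — the ``$2$'' is exactly the AM--GM constant, and the slack $C-2$ accommodates the unavoidable (but lower-order) excess present in genuinely near-extremal configurations.
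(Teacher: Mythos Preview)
Your AM--GM reduction is exactly what the paper does in its Case~3: from $\mu_p(\cA)+\mu_p(\cB)\ge 2\sqrt{1-\delta}\,f$ and inclusion--exclusion one gets $S\le 2(1-\sqrt{1-\delta})f+2(e_\cA+e_\cB)$, and everything reduces to controlling $e_\cA+e_\cB$. The paper, however, does not go through a deep/shallow dichotomy. It invokes Theorem~\ref{thm:stability}, whose part~(b) gives $X_*\le\epsilon_1 X_\Delta$ with $\epsilon_1$ chosen so that $C=2/(1-2\epsilon_1)$; in your notation this reads $e_\cA+e_\cB\le\epsilon_1 S$, and combined with the AM--GM line it yields $S\le C(1-\sqrt{1-\delta})f$ in one step.

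There is a genuine gap in your quantitative stage. The line ``summing gives $\mu_p(\cB)\le f-(t-1)e_\cA$'' tacitly assumes that the regions of $\cF^t_r$ expelled from $\cB$ by distinct shifted-minimal $A_*\in\cA\setminus\cF^t_r$ are disjoint; they need not be, so the sum can vastly overcount $d_\cB$, and the inequality does not follow. The paper avoids any such summation. In the diagonal case $s=s'=r$ it fixes a \emph{single} integer parameter $I=\max\{i:D^t_r(i)\in\cA\}$ for an explicit walk $D^t_r(i)\in\dot\cF^t_r$, and from $D^t_r(I+1)\notin\cA$ and $D^t_r(I)\in\cA$ reads off
\[
\mu_p(\cF^t_r\setminus\cA)\ge\Theta(p^t q^I),\qquad \mu_p(\cB\setminus\cF^t_r)\le(p/q)^{t+I},
\]
so that $e_\cB/d_\cA=O\bigl((p/q^2)^I\bigr)=O(p)$, and symmetrically $e_\cA/d_\cB=O(p)$. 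This delivers your desired $e_\cA+e_\cB\ll d_\cA+d_\cB$ without summing over minimal elements and without any qualitative/quantitative split.

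A second gap: you dismiss $\cA=\cB\cong\cF^t_{r\pm1}$ but say nothing about the non-diagonal near-extremal pairs $\{\cF^{t-1}_{r+1},\cF^{t+1}_r\}$ and $\{\cF^{t-1}_r,\cF^{t+1}_{r-1}\}$, which are equally $t$-nice and must also be excluded before one is entitled to assume $(s,s')=(r,r)$. The paper handles these in its Cases~1 and~2 by computing that on the interval $\tfrac{r+\epsilon}{t}\le p\le\tfrac{r+1-\epsilon}{t}$ each such pair has product at most $(1-\tfrac{\epsilon}{r+1}+o(1))f^2$; this is exactly where the hypothesis $\delta<\epsilon/(r+1)$ is spent, not in the diagonal analysis you focus on.
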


For $r=0$, a similar result was proved in~\cite{FLST}; for the same $t$ it is weaker than this, but it is proved for $t_0 = 14$. 
There are some points about this theorem that bear further explanation.
For one, no matter how close we require that $\mu_p (\cA)\mu_p (\cB)$
be to $\mu_p (\cF_r^t)^2$ in \eqref{eq:AB_large(2)} there are $t$-nice
families $\cA$ and $\cB$, which are \emph{not}
subfamilies of $\cF_r^t$, that satisfy \eqref{eq:<f(gamma)}.
Indeed, consider the families 
\[
 \cA=\cB=\left(\cF^t_r\setminus \tbinom{[t+2r]}{t+r} \right)\sqcup
\Big\{G\sqcup[t+2r+1,n]:G\in\tbinom{[t+2r]}{t+r-1}\Big\}.
\]
The families $\cA$ and $\cB$ are $t$-nice, and 
$\mu_p(\cA)\mu_p(\cB)\to\mu_p(\cF^t_r)^2$ for $n \gg t$ as $t\to\infty$.
Theorem~\ref{thm:weakstability} says that such families  $\cA$ and $\cB$ must be close to $\cF^t_r$ in the sense that the sum of their measures 
$\mu_p(\cA\triangle\cF^t_r)+\mu_p(\cB\triangle\cF^t_r)$ goes to $0$.
Observe that with such a definition of closeness, inequality~\eqref{eq:<f(gamma)} is sharp.
Indeed, if $\cA=\cB\subset\cF^t_r$ and 
$\mu_p(\cA)=\sqrt{1-\delta}\mu_p(\cF^t_r)$ then the LHS of \eqref{eq:<f(gamma)} 
is precisely $2(1 - \sqrt{1 - \delta})\,\mu_p(\cF^t_r)$.

Another point we should explain is the reduced range of $p$ in the statement of the theorem.  
When $p = (r+1)/(t + 2r +1)$ we have $\mu_p (\cF^t_{r+1}) = \mu_p (\cF^t_r)$, so to make a statement of
stability with respect to $\cF^t_r$ it is necessary to move $p$ away from this point. We thus introduce
a gap of $\epsilon$ into the bound $p < (r+1-\epsilon)/(t+2r+1)$, and once this is introduced, we
absorb constants into it and simplify it to $p < (r+1-\epsilon)/t$.
Similarly, we require $p>(r+\epsilon)/t$ because of the family $\cF^t_{r-1}$.
As $\epsilon$  goes to $0$, we must introduce $\delta$ in \eqref{eq:AB_large(2)} that depends on $\epsilon$.  
 
It turns out that the condition $\delta<\epsilon/(r+1)$ is sharp.
Indeed, consider the following pair of families:
\begin{center}
 $\cA=\cF^{t-1}_{r+1}$ and $\cB=\cF^{t+1}_r$. 
\end{center}
The families $\cA$ and $\cB$ are $t$-nice, and  far from $\cF^t_r$, 
in fact, both 
$\mu_p(\cA\setminus\cF^t_r)/\mu_p(\cF^t_r)$ and
$\mu_p(\cF^t_r\setminus\cB)/\mu_p(\cF^t_r)$ go to infinity as $t\to\infty$.
On the other hand, one 
can show that if $p=\frac{r+1-\epsilon}{t}$, then
\[\frac{\mu_p(\cA)\mu_p(\cB)}{\mu_p(\cF_r^t)^2}= \left(1 - \frac{\epsilon}{r+1}\right)(1+o(1)) 
\] (See~\eqref{eq:<1-e/(r+2)}).
So the condition $\delta < \epsilon/(r+1)$ cannot be improved.

For a full stability result, we must consider these other extremal
families. We do this with the more complicated
Theorem~\ref{thm:stability},
from which Theorem~\ref{thm:weakstability} follows as a corollary.  
Assume that $\cA$ and $\cB$ are $t$-nice families, 
and \eqref{def:p range} is satisfied.
Theorem~\ref{thm:stability} says that 
either $\mu_p (\cA)\mu_p (\cB)$ is much smaller than the optimal value $\mu_p (\cF_r^t)^2$, or $\{\cA,\cB\}$ is close to one of
\begin{equation}\label{eq:extremal families}
\text{
$\{\cF^t_{r-1},\cF^t_{r-1}\}$, $\{\cF^t_r,\cF^t_r\}$, 
$\{\cF^t_{r+1},\cF^t_{r+1}\}$, $\{\cF^{t-1}_r,\cF^{t+1}_{r-1}\}$, 
or $\{\cF^{t-1}_{r+1},\cF^{t+1}_{r}\}$.
}
\end{equation}

We  denote the set of pairs of subscripts of the extremal families in \eqref{eq:extremal families} 
 by
  \begin{equation}\label{def:R_ex} R_{\rm ex}:= \left\{
      \begin{array}{cl}
          \{(0,0),(1,1),(1,0)\} & \text{if } r = 0\\
          \{(r-1,r-1),(r,r),(r+1,r+1),(r,r-1),(r+1,r)\} & \text{if } r\geq 1.
      \end{array} \right.
 \end{equation}

 If $\mu_p (\cA)\mu_p (\cB)$ is close to optimal, then we will have, up to switching $\cA$ and $\cB$,
 that  $(\cA,\cB)$ is close to $(\cF^u_s,\cF^v_{s'})$ for a unique $(s,s') \in R_{\rm ex}$, with
 $u = t - (s-s')$ and $v = t + (s - s')$.  Again, to quantify `close to' we consider the measure
 of the symmetric differences
 $\cA\triangle\cF^u_s$ and $\cB\triangle\cF^v_{s'}$.  We could just sum these, but we will
 observe below, in \eqref{weight cFtr2},  that $\mu_p (\cF^t_r) = \Theta(p^t)$, so the measures of
 $\cF_s^u$ and $\cF_{s'}^v$ can be vastly different. It is natural,
 therefore, to normalise the measures of these symmetric differences with respect to the measures of
$\cF^u_s$ and $\cF^v_{s'}$. 
We thus define the following normalised measures:
\begin{align*}
 X&:=p^{t-u}\mu_p (\cA) + p^{t-v}\mu_p (\cB), \\
X_{\cF}&:=p^{t-u}\mu_p (\cF^u_s)+p^{t-v}\mu_p (\cF^v_{s'}),\\
X_{\Delta}&:=p^{t-u}\mu_p (\cA\triangle\cF^u_s) + p^{t-v}\mu_p (\cB\triangle\cF^v_{s'}), \\
X_{*}&:=p^{t-u}\mu_p (\cA\setminus\cF^u_s) + p^{t-v}\mu_p (\cB\setminus\cF^v_{s'}).
\end{align*}
Now we can state our main result.
\begin{theorem}\label{thm:stability}
  For every integer $r\geq 0$, and all real numbers 
$\epsilon,\epsilon_1\in(0,1/2)$, and $\delta_1\in(0,1/(r+2))$,
there exists an integer 
\[
 t_0=\begin{cases}
      t_0(r,\epsilon_1,\delta_1)\text{ for }r\geq 1,\\
      t_0(\epsilon,\epsilon_1,\delta_1)\text{ for }r=0
     \end{cases} 
\]
such that for all $n\geq t\geq  t_0 $ the following holds. Let 
\begin{align*}
\frac {r}{t+2r-1}\leq p\leq \frac {r+1}{t+2r+1} \text{ for }r\geq 1,
\text{ and } \frac{\epsilon}{t}\leq p\leq\frac{1}{t+1} \text{ for }r=0,
\end{align*}
 and let $\cA,\cB\subset 2^{[n]}$ be 
$t$-nice families. 
Suppose \begin{equation}\label{eq:AB_large}\mu_p (\cA)\mu_p (\cB)\geq(1-\delta_1)\mu_p (\cF^t_r)^2.\end{equation} 
Then, up to switching $\cA$ and $\cB$, there exists unique $(s,s')\in R_{\rm ex}$ such that, for
\begin{equation*}\label{eq:(s,s') is in Rex}
u=t-(s-s') \mbox{ and }  v=t+(s-s'), 
\end{equation*}
the following hold.
\begin{enumerate}
\item[(a)]
$X\leq X_{\cF} $ with equality holding if and only if 
$\cA=\cF^u_s$ and $\cB=\cF^v_{s'}$, 
\item[(b)]
$X_{*}\leq \epsilon_1X_{\Delta}$, and
\item[(c)]
$\mu_p(\cA)\mu_p(\cB)\leq\mu_p(\cF^u_s)\mu_p(\cF^v_{s'})$ with equality holding if and only if 
$\cA=\cF^u_s$ and $\cB=\cF^v_{s'}$.
\end{enumerate}
\end{theorem}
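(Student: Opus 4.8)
The plan is to combine the random-walk method with a careful analysis of shifted cross $t$-intersecting families. The key reduction, established via shifting, is that $t$-nice families $\cA,\cB$ can be analysed by their \emph{generating sets}, i.e.\ the minimal members with respect to inclusion; since $\cA$ is shifted, these can be controlled by associating to each minimal set $A$ a lattice path that records whether each $i\in[n]$ lies in $A$, and bounding the measure $\mu_p(\cA)$ by the probability that a $p$-random walk (up with probability $p$, down with probability $q=1-p$) ever visits the ``obstruction region'' determined by the generating sets. The cross $t$-intersecting condition translates into a combinatorial constraint pairing the generating sets of $\cA$ against those of $\cB$, which forces one of the two families to have all its small generators concentrated in an initial segment of $[n]$ close to $[t+2r]$.

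First I would set up the random-walk estimates: record the precise asymptotics $\mu_p(\cF^t_r)=\Theta(p^t)$ from \eqref{weight cFtr1}, and prove general upper bounds for $\mu_p(\cA)$ in terms of the ``type'' of the shifted family $\cA$, where the type measures how far into $[n]$ the generating sets of $\cA$ reach. The core dichotomy is then: either both $\cA$ and $\cB$ have generators only within a bounded-size initial segment (so each is, up to a small-measure symmetric difference, one of the families $\cF^u_s$), or at least one of them has a generator reaching far out, in which case the random-walk bound shows its measure is so small that $\mu_p(\cA)\mu_p(\cB)$ drops well below $(1-\delta_1)\mu_p(\cF^t_r)^2$, contradicting \eqref{eq:AB_large}. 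This is where the hypothesis that $t$ is large (with $r$ fixed and $p$ a fixed proportion through the range) is essential: it makes the gap between the optimal product measure and the next-best configurations large enough to separate the cases.

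Next, having localised $\cA$ and $\cB$ to initial segments, I would enumerate the finitely many possible pairs of ``shapes'' for the generating sets and check, for each, whether the cross $t$-intersecting constraint is satisfiable and what product measure results. This finite case analysis, combined with the inequality $\mu_p(\cF^t_{r+1})-\mu_p(\cF^t_r)$ having a sign determined by $p-(r+1)/(t+2r+1)$ (and the analogous statement for $\cF^t_{r-1}$), pins down that the only shapes surviving \eqref{eq:AB_large} are exactly those indexed by $R_{\rm ex}$, giving the uniqueness of $(s,s')$ and the values $u=t-(s-s')$, $v=t+(s-s')$. For part (c), once $(s,s')$ is fixed, $\mu_p(\cA)\leq\mu_p(\cF^u_s)$ and $\mu_p(\cB)\leq\mu_p(\cF^v_{s'})$ should follow because $\cF^u_s$ and $\cF^v_{s'}$ are the \emph{unique} inclusion-maximal shifted cross $t$-intersecting completions of the localised shapes, and multiplying gives (c) with the stated equality condition. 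Part (a) is a measure-normalised restatement: writing $X=p^{t-u}\mu_p(\cA)+p^{t-v}\mu_p(\cB)$, the bound $X\leq X_{\cF}$ follows from the two individual bounds after scaling, with equality iff both individual bounds are tight.

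For part (b), the claim $X_*\leq\epsilon_1 X_\Delta$ says that almost none of the normalised symmetric-difference measure comes from sets \emph{outside} $\cF^u_s$ or $\cF^v_{s'}$ — i.e.\ $\cA$ and $\cB$ are essentially \emph{sub}families. I would prove this by showing that any member of $\cA\setminus\cF^u_s$ forces, via the cross-intersecting condition and shiftedness, a substantial family of members to be \emph{missing} from $\cB$ relative to $\cF^v_{s'}$ (and symmetrically), so that $p^{t-u}\mu_p(\cA\setminus\cF^u_s)$ is dominated by $p^{t-v}\mu_p(\cF^v_{s'}\setminus\cB)\leq X_\Delta$ up to a factor that tends to $0$ as $t\to\infty$; choosing $t_0$ large enough against $\epsilon_1$ closes it. The main obstacle I expect is precisely this trade-off bookkeeping in part (b): one must track how an ``excess'' generator in $\cA$ of a given shape propagates to a quantified ``deficit'' in $\cB$, uniformly over all the localised shapes in $R_{\rm ex}$ and over the full range of $p$, while keeping the random-walk error terms under control — this is the step where the shifting, the cross-intersecting constraint, and the random-walk measure bounds all have to be reconciled simultaneously.
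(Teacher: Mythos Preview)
Your overall shape---random-walk bounds, a localisation step forcing $(s,s')\in R_{\rm ex}$, and a trade-off argument for part (b)---matches the paper, but there is a genuine gap in how you derive (a) and (c).

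You write that ``$\mu_p(\cA)\leq\mu_p(\cF^u_s)$ and $\mu_p(\cB)\leq\mu_p(\cF^v_{s'})$ should follow because $\cF^u_s$ and $\cF^v_{s'}$ are the unique inclusion-maximal shifted cross $t$-intersecting completions,'' and that (a) then follows from these individual bounds after scaling. This is false: in general $\cA\not\subset\cF^u_s$ and $\cB\not\subset\cF^v_{s'}$, so neither individual inequality holds. (The paper even gives an explicit example of $t$-nice $\cA=\cB$ with $\mu_p(\cA)\mu_p(\cB)\to\mu_p(\cF^t_r)^2$ that are not subfamilies of $\cF^t_r$; and it remarks that $X\leq X_{\cF}$ can fail without the hypothesis \eqref{eq:AB_large}.) Consequently your derivation of (c) by multiplying individual bounds, and of (a) by summing scaled individual bounds, does not work.

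What the paper actually does is the reverse order: it first proves the trade-off bound you sketch for (b), in the sharper form $X_*\ll X_\Delta-X_*$ (equivalently $\mu_p(\cB\setminus\cF^v_{s'})\ll\mu_p(\cF^u_s\setminus\cA)$ and symmetrically), via explicit walks $D^u_s(i)$ and their duals together with a parameter $I=\max\{i:D^u_s(i)\in\cA\}$. From $X_*<X_\Delta-X_*$ one deduces (a) by the identity $X=X_*+(\text{measure of }\cA\cap\cF^u_s)+(\text{measure of }\cB\cap\cF^v_{s'})$, and only then gets (c): in the diagonal case $s=s'$ via AM-GM from (a), and in the non-diagonal case $s=s'+1$ via a separate algebraic argument (the paper's Lemma~\ref{lemma:ttj}) that again uses the trade-off inequalities, not containment. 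So your plan for (b) is the right engine, but it must come \emph{first} and drive (a) and (c), rather than being an afterthought to a containment-based proof of (a) and (c) that does not exist.
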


What does this say exactly?  It says that if the product of the measures of $\cA$
and $\cB$ is close to optimal, then $(\cA, \cB)$ has measure not greater than one of the 
one of the pairs $(\cF_s^u, \cF_{s'}^v)$ of extremal families in \eqref{eq:extremal families}, and is 
close to this pair, in the sense that $X_\Delta$ is small. 
To see that $X_\Delta$ is small one can use (b) to show $X_{\Delta} \leq\frac{2(1-\sqrt{1-\delta_1})}{1-2\epsilon_1}\mu_p(\cF^t_r)$
as we do for  \eqref{eq:bndXdelta}.  
 That the measure of $(\cA, \cB)$ is at most that of $(\cF_s^u, \cF_{s'}^v)$ is stated with (a) and  (c). 
 We explain why there are two statements. The statements are very similar when
\eqref{eq:AB_large} holds. Statement (c) is perhaps the more obvious statement, in light of
the required inequality \eqref{eq:main ineq} of Theorem \ref{thm:inequality}, and in the case that
$s = s' = r$ it is all we need for proving both Theorem \ref{thm:inequality} and
\ref{thm:weakstability}.  Moreover, in this
case (c) follows from (a) by the AM-GM inequality.  When $s - s' = 1$ things are not so clean, and
we need both statements.  Statement (a) is stronger as $p$ approaches $r/(t+2r-1)$ or $(r+1)/(t+2r+1)$
and so we use this to prove Theorem \ref{thm:inequality}. When $p$ is bounded away from these
endpoints, (c) is stronger (and harder to prove) than statement (a). We use it to
prove Theorem \ref{thm:weakstability}.

We mention that the inequality $X\leq X_{\cF} $ in (a) is not necessarily true 
unless \eqref{eq:AB_large} holds. Indeed if, e.g., 
$\{\cA,\cB\}=\{\emptyset,2^{[n]}\}$,
or $\{\{[t]\},\bigcup_{i\geq t}\binom{[n]}t\}$, then it follows that $X\gg X_{\cF} $
if $n\gg t$.

We also mention that the condition $\delta_1<\frac1{r+2}$ is tight.
To see this, let $\{\cA,\cB\}=\{\cF^{t-1}_{r+2},\cF^{t+1}_{r+1}\}$,
or $\{\cF^{t-2}_{r+2},\cF^{t+2}_{r}\}$, and let $p=\frac{r+1}{t+2r+1}$.
 Then $\cA$ and $\cB$ are $t$-nice, and we have $\mu_p(\cA)\mu_p(\cB)/\mu_p(\cF^t_r)^2>1-\frac1{r+2}$,
  (and the LHS converges to the RHS as $t\to\infty$,)  but (b) does not hold for
  any $(s,s')\in R_{\rm ex}$.
This means that one cannot replace the condition \eqref{eq:AB_large}  with
$\mu_p(\cA)\mu_p(\cB)>(1-\frac1{r+2})\mu_p(\cF^t_r)^2$.

The organization of this paper is as follows.
In Section~\ref{sec:extFam} we 
derive Theorems~\ref{thm:inequality} and \ref{thm:weakstability} from
Theorem~\ref{thm:stability}.  Most of the rest of the paper is dedicated to the proof of
Theorem~\ref{thm:stability}. 
In Section \ref{sec:prelim} we recall some useful tools  from \cite{FLST} and \cite{LST}.
In Section \ref{sec:asymp} we lay out our asymptotic assumptions
and use them to give simplified expressions for frequently used values.
In Section \ref{subsec:setup}
we reduce Theorem~\ref{thm:stability} to the
essential case: $u+v=2t$. We then define the parameters $s$ and $s'$,
and use them to distinguish three cases for the proof of
Theorem~\ref{thm:stability}:  the non-extremal case, the diagonal extremal case, and the non-diagonal extremal case.
In Section~\ref{sec:ABEC} we settle the non-extremal case. 
We deal with the diagonal extremal case in
Section~\ref{sec:extremal cases}, and then, following the proof of this very closely,
we consider the non-diagonal extremal case in Section \ref{subsec:s=s'+1}.
In Section~\ref{sec:conclude} we make some brief comments about a recent result of
Ellis, Keller and Lifshitz \cite{EKL2} which is related to Theorem~\ref{thm:weakstability}.

\section{Proofs of Theorems~\ref{thm:inequality} and~\ref{thm:weakstability}}
\label{sec:extFam}

In this section
we derive Theorems~\ref{thm:inequality} and \ref{thm:weakstability} from Theorem~\ref{thm:stability}.
We use some basic asymptotics from Section~4.1.

\subsection{Proof of Theorem~\ref{thm:inequality} (using Theorem~\ref{thm:stability})}

The case $r=0$ of Theorem \ref{thm:inequality} is proved in \cite{FLST}, and hence, we fix $r \geq 1$. 
Let $\epsilon_1$ be a fixed constant in $(0,1/2)$,  and let
$ t_0 $ be $ t_0 (r,\epsilon_1,1/(r+3))$ of Theorem~\ref{thm:stability}. 
Let $n,t,p$ be chosen so that
$n\geq t\geq  t_0 $ and $\frac r{t+2r-1}\leq p\leq\frac{r+1}{t+2r+1}$.

We first suppose that $\cA,\cB\subset 2^{[n]}$ are $t$-nice families.
If condition \eqref{eq:AB_large} of Theorem~\ref{thm:stability} does
not hold, then Theorem~\ref{thm:inequality} is clearly true, so assume
it holds and apply Theorem~\ref{thm:stability}. This gives us
values $(s,s') \in R_{\rm ex}$. 

In the case that $s = s'$, we have $u=v=t$. Item (c) of 
Theorem~\ref{thm:stability} gives
\[ \mu_p(\cA) \mu_p(\cB) \leq \mu_p (\cF^u_s)\mu_p(\cF^v_{s'}) = \mu_p(\cF^t_r)^2, \]
where the equality holds if and only if
$\cA$ and $\cB$ satisfy one of (i), (ii), and (iii) of 
Theorem~\ref{thm:inequality}.

In case that $s-s'=1$, the AM-GM inequality and (a) of 
Theorem~\ref{thm:stability} 
imply that
\begin{align*}\label{eq:AB}
 2\sqrt{\mu_p(\cA) \mu_p(\cB)}&\leq X\leq X_{\cF}.
\end{align*}

The following claim immediately implies~\eqref{eq:main ineq}.
\begin{claim}\label{claim:2opt>pf+g/p}
For $t\geq t_0$ and $(s,s')\in \{(r,r-1),(r+1,r)\}$ we have
$$X_{\cF}=p\mu_p (\cF^u_s)+p^{-1}\mu_p (\cF^v_{s'})<2\mu_p (\cF^t_r).$$
\end{claim}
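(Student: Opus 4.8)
The plan is to expand both $\mu_p(\cF^u_s)$ and $\mu_p(\cF^v_{s'})$ using the binomial formula \eqref{weight cFtr1}, compare the result term-by-term with $2\mu_p(\cF^t_r)$, and show that the discrepancy is strictly negative for large $t$. Recall that for $(s,s')=(r,r-1)$ we have $u=t-1$, $v=t+1$, and for $(s,s')=(r+1,r)$ we have $u=t-1$, $v=t+1$ as well (since $s-s'=1$ in both cases). So in either case $X_\cF = p\,\mu_p(\cF^{t-1}_s) + p^{-1}\mu_p(\cF^{t+1}_{s-1})$ with $q=1-p$. Using \eqref{weight cFtr1},
\[
p\,\mu_p(\cF^{t-1}_s) = \sum_{i=0}^{s}\binom{t-1+2s}{i}p^{t+2s-i}q^i,
\qquad
p^{-1}\mu_p(\cF^{t+1}_{s-1}) = \sum_{i=0}^{s-1}\binom{t+2s-1}{i}p^{t+2s-2-i}q^i.
\]

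The first step is to normalise: since $p = \frac{r+1}{t+2r+1}$-ish is $\Theta(1/t)$ and $q = 1-o(1)$, the dominant term in $\mu_p(\cF^t_r)$, in $p\,\mu_p(\cF^{t-1}_s)$, and in $p^{-1}\mu_p(\cF^{t+1}_{s-1})$ is the one with the largest power of $q$, i.e. $i$ as large as possible. I would factor out $p^{t}$ (or better, compare everything to the single term $\binom{t+2r}{r}p^{t+r}q^r$, which by the asymptotics of Section 4.1 is the main term of $\mu_p(\cF^t_r)$), and reduce the claim to an inequality among binomial-coefficient-times-power-of-$(p/q)$ expressions. Writing $x := p/q = \Theta(1/t)$, each sum becomes a polynomial in $x$ whose coefficients are ratios of binomial coefficients; since $\binom{t+2s}{i+1}/\binom{t+2s}{i} = \frac{t+2s-i}{i+1} = \Theta(t)$, consecutive terms differ by a factor $\Theta(tx) = \Theta(1)$, so only boundedly many terms matter and the whole comparison is a finite inequality in the single variable $tx$ (equivalently in $p\cdot t$), with $r$ and $s$ fixed. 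The key computational content is then: for $s\in\{r,r+1\}$ and $tp$ in the allowed range, the leading-order value of $p\,\mu_p(\cF^{t-1}_s)+p^{-1}\mu_p(\cF^{t+1}_{s-1})$, divided by $\mu_p(\cF^t_r)$, is strictly less than $2$.

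The cleanest way to organise this is probably to use the relation already derived in the introduction: $\mu_p(\cF^t_{r+1}) - \mu_p(\cF^t_r)$ has sign $p - \frac{r+1}{t+2r+1}$, and an analogous identity relates $\mu_p(\cF^{t\pm1}_s)$ to $\mu_p(\cF^t_r)$ by peeling off or adjoining a single coordinate. Concretely, $\mu_p(\cF^{t-1}_s) = \mu_p\big(\{F : |F\cap[t-1+2s]|\ge t-1+s\}\big)$, and splitting on whether a fixed extra element lies in $F$ gives a two-term recursion expressing $p\,\mu_p(\cF^{t-1}_s)$ and $p^{-1}\mu_p(\cF^{t+1}_{s-1})$ against $\mu_p(\cF^t_s)$ and $\mu_p(\cF^t_{s-1})$. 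Combined with the monotonicity chain $\mu_p(\cF^t_{r-1})\le\mu_p(\cF^t_r)\ge\mu_p(\cF^t_{r+1})$ valid under \eqref{def:p range}, one expects $p\,\mu_p(\cF^{t-1}_s)+p^{-1}\mu_p(\cF^{t+1}_{s-1}) < \mu_p(\cF^t_s)+\mu_p(\cF^t_{s-1}) \le 2\mu_p(\cF^t_r)$, where the last inequality uses $s,s-1 \in \{r-1,r,r+1\}$ and the chain. I would check the first (strict) inequality by the term-by-term/generating-function comparison above.

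The main obstacle is the strictness and uniformity of the first inequality near the endpoints of the $p$-range: at $p=\frac{r+1}{t+2r+1}$ we have $\mu_p(\cF^t_r)=\mu_p(\cF^t_{r+1})$, so some of the slack in the crude bound $\mu_p(\cF^t_s)+\mu_p(\cF^t_{s-1})\le 2\mu_p(\cF^t_r)$ vanishes, and one must extract the strict loss from the factors of $p$ versus $p^{-1}$ (i.e. from the fact that $p\ne 1$, so $p\,\mu_p(\cF^{t-1}_s)$ is genuinely smaller than the "full" measure $\mu_p(\cF^t_s)$ by a definite proportion). Tracking this carefully — showing the $o(1)$ error terms from Section 4.1 do not swamp a $\Theta(1)$ gap — is the delicate part; everything else is bookkeeping with binomial ratios.
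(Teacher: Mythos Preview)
Your overall plan---reduce each $\mu_p(\cF^{\cdot}_{\cdot})$ to its main binomial term and compare---is exactly what the paper does, but your execution takes an unnecessary detour and misestimates the size of the gap you must control.

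The detour through the chain
\[
  p\,\mu_p(\cF^{t-1}_s)+p^{-1}\mu_p(\cF^{t+1}_{s-1}) \;<\; \mu_p(\cF^t_s)+\mu_p(\cF^t_{s-1}) \;\le\; 2\mu_p(\cF^t_r)
\]
buys nothing: as you yourself note, at the critical endpoint (e.g.\ $p=\tfrac{r}{t+2r-1}$ when $(s,s')=(r,r-1)$) the second inequality is an \emph{equality}, so all of the strictness must come from the first inequality---which is then no easier than the original claim. More seriously, your assertion that the remaining gap is $\Theta(1)$ is wrong: at the endpoints the \emph{relative} gap $1 - X_\cF/(2\mu_p(\cF^t_r))$ is only $\Theta(1/t)$. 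So ``$o(1)$ errors do not swamp a $\Theta(1)$ gap'' is insufficient; you genuinely need the $\Theta(1/t^2)$ error in \eqref{eqn:2det}, not just $o(1)$.

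The paper's route is far more direct. Using \eqref{eqn:2det} for each of the three measures, the comparison of main terms reads (for $(s,s')=(r,r-1)$)
\[
  \binom{t+2r-1}{r} + \binom{t+2r-1}{r-1}\frac{1}{pq} \;<\; 2\binom{t+2r}{r},
\]
and a single application of Pascal's identity collapses this to $pq > \tfrac{r}{t+3r}$. Checking at the worst endpoint $p=\tfrac{r}{t+2r-1}$ yields the explicit condition $t>r^2-r+1$. The case $(s,s')=(r+1,r)$ is symmetric and reduces to $pq < \tfrac{(r+1)(t+1)}{(t+2r+1)(t+r+1)}$, giving $t>r^2-r-1$. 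The $\Theta(1/t^2)$ error in \eqref{eqn:2det} is absorbed because the main-term inequality holds with relative slack $\Theta(1/t)$. No recursion, no term-by-term comparison, no intermediate family is needed.
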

Except for proving this claim, we have thus proved Theorem~\ref{thm:inequality} provided that
$\cA$ and $\cB$ are $t$-nice families. 
Now suppose that $\cA$ and $\cB$ are 
(not necessarily shifted) inclusion maximal and cross $t$-intersecting
families. Let $\cA'$ and $\cB'$ be $t$-nice families obtained
from $\cA$ and $\cB$ after applying a sequence of shifting operations.
Then, by the fact we have just proved, the families $\cA'$ and $\cB'$
satisfy the inequality \eqref{eq:main ineq} with the equality conditions. 
Since the measure is invariant under shifting operations, we have $\mu_p(\cA)\mu_p(\cB)=\mu_p(\cA')\mu_p(\cB')$, and hence, 
we still have \eqref{eq:main ineq} for $\cA$ and $\cB$. Moreover, it is known
from Lemma~6 in \cite{LST} that if $\cA'=\cB'\cong\cF^x_y$ then 
$\cA=\cB\cong\cF^x_y$. Thus the equality conditions hold for $\cA$ and $\cB$
as well. This completes the proof of Theorem~\ref{thm:inequality}, up to the
proof of Claim~\ref{claim:2opt>pf+g/p}, which we give now. 

\begin{proof}[Proof of Claim~\ref{claim:2opt>pf+g/p}]
Since $s=s'+1$, we have that $u=t-1 \mbox{ and } v=t+1.$

First, we consider the case where $(s,s')=(r,r-1)$. 
Using $p = \Theta(1/t)$, it follows from \eqref{weight cFtr1} (see also \eqref{weight cFtr}) that
\begin{equation}\label{eqn:2det}
\mu_p (\cF^t_r)=\binom{t+2r}{r}p^{t + 2r - r}q^r(1+\Theta(1/t^2)).
\end{equation}
Hence, we can check Claim~\ref{claim:2opt>pf+g/p} by comparing the main terms of 
$p\mu_p (\cF^u_s)+p^{-1}\mu_p (\cF^v_{s'})$ and  $2\mu_p (\cF^t_r)$. That is, it suffices to show 
$p\binom{t+2r-1}r p^{t+r-1}q^r+\frac1p\binom{t+2r-1}{r-1}p^{t+r}q^{r-1}< 2\binom{t+2r}rp^{t+r}q^r$,
that is,
$\binom{t+2r-1}r+\binom{t+2r-1}{r-1}\frac1{pq}< 2\binom{t+2r}r$,
or equivalently,
$p(1-p)>\frac{r}{t+3r}$.
The LHS is minimized, for $p$ satisfying \eqref{def:p range}, at
$p=\frac{r}{t+2r-1}$; and in this case the above
inequality is equivalent to $t>r^2-r+1$.

Next, we consider the case where $(s,s')=(r+1,r)$. In this case it suffices to show that
$\binom{t+2r+1}{r+1}pq+\binom{t+2r+1}{r}  <2\binom{t+2r}r$,
or equivalently,
$p(1-p)<\frac{(r+1) (t+1)}{(t+2r+1)(t+r+1)}$.
The LHS is maximized at $p=\frac{r+1}{t+2r+1}$ and then the above
inequality is equivalent to~$t>r^2-r-1$.
\end{proof}

 Although we use \cite{FLST} for the case $r=0$, we could have proved this case with 
  Theorem~\ref{thm:stability} as well; but it would have required a slightly more complicated statement.  
  The only problem in applying Theorem~\ref{thm:stability} as is, is the condition $\epsilon/t \leq p$
  in the case $r = 0$.  This condition is used only in proving Lemma \ref{lemma:ttj}, which gives us
  item (c) of Theorem~\ref{thm:stability} in the case that $s \neq s'$.
  However, we only need (a) of Theorem~\ref{thm:stability} to prove Theorem~\ref{thm:inequality}.

\subsection{Proof of Theorem~\ref{thm:weakstability} (using Theorem~\ref{thm:stability})}
Let $r,\epsilon$, and $C > 2$ be given, and set $\epsilon_1$ so that
$C=2/(1-2\epsilon_1)$, 
(which implies $\epsilon_1<1/2$).
Let $\delta_1=2\epsilon/(r+2)$, and
$t_0=t_0(r,\epsilon_1,\delta_1)$ for $r\geq 1$
or $t_0=t_0(\epsilon,\epsilon_1,\delta_1)$ for $r=0$
be determined by Theorem~\ref{thm:stability}.
Let $\cA$ and $\cB$ be families, as in the setup of Theorem \ref{thm:weakstability}, that satisfy \eqref{eq:AB_large(2)}.  
Then \eqref{eq:AB_large} holds, and we can apply
Theorem~\ref{thm:stability} to get $(s,s')\in R_{\rm ex}$ for which
(a)--(c) hold.
We consider the following three cases separately:
\emph{Case 1} $(s,s')=(r+1,r)$ or $(r,r-1)$,
\emph{Case 2} $(s,s')=(r+1,r+1)$ or $(r-1,r-1)$,
\emph{Case 3} $(s,s')=(r,r)$.

\noindent $\bullet$ \emph{Case 1.} Since $s=s'+1$, we have
$u=t-1$ and $v=t+1$.
By (c) of Theorem~\ref{thm:stability}, we have
\begin{equation}\label{eq:AB/F2}
\mu_p(\cA)\mu_p(\cB)
\leq \mu_p(\cF^{t-1}_s)\mu_p(\cF^{t+1}_{s'}).
\end{equation}

\begin{claim}\label{claim:for corollary2b}
Let  $(s,s')\in\{(r+1,r),(r,r-1)\}$ and $0<\epsilon<1/2$. 
 If~$\frac{r+\epsilon}t\leq p\leq\frac{r+1-\epsilon}t$, then
\begin{equation}\label{eq:<1-e/(r+2)}
\frac{\mu_p(\cF^{t-1}_{s})\mu_p(\cF^{t+1}_{s'})}{\mu_p(\cF^t_r)^2}\leq
\left(1-\frac{\epsilon}{r+1}\right)(1+o(1)).\end{equation} 
\end{claim}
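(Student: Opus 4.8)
The plan is to compute both sides of \eqref{eq:<1-e/(r+2)} using the asymptotic expression \eqref{weight cFtr1}, exploiting that $p = \Theta(1/t)$ throughout the allowed range. First I would record, as in \eqref{eqn:2det}, that for $0\le s''\le r+1$ fixed and $p=\Theta(1/t)$,
\[
\mu_p(\cF^{w}_{s''}) = \binom{w+2s''}{s''}p^{w+s''}q^{s''}\bigl(1+\Theta(1/t^2)\bigr),
\]
since in the sum \eqref{weight cFtr1} the term $i=s''$ dominates and each successive lower term is smaller by a factor $\Theta(p t) = \Theta(1)$ times a binomial ratio $\Theta(1/t)$, hence is down by $\Theta(1/t)$; one is careful that the geometric-type tail only contributes $\Theta(1/t)$, not $\Theta(1/t^2)$, so the cleanest route is to keep the error as $(1+o(1))$ rather than $\Theta(1/t^2)$ — for the claim only $(1+o(1))$ is needed. (Actually it's worth double-checking whether one wants $(1+O(1/t))$ on the smaller $\mu_p(\cF^{t-1}_s)$ since that appears multiplied by the larger $\mu_p(\cF^{t+1}_{s'})$; but since the RHS of the claim already carries a blanket $(1+o(1))$, being sloppy here costs nothing.)

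Next I would form the ratio. Write $(s,s') = (r,r-1)$ first. Then
\[
\frac{\mu_p(\cF^{t-1}_{r})\,\mu_p(\cF^{t+1}_{r-1})}{\mu_p(\cF^t_r)^2}
= \frac{\binom{t+2r-1}{r}\binom{t+2r-1}{r-1}}{\binom{t+2r}{r}^2}\cdot\frac{p^{t+2r-1}q^{r}\cdot p^{t+r}q^{r-1}}{p^{2t+2r}q^{2r}}\,(1+o(1))
= \frac{\binom{t+2r-1}{r}\binom{t+2r-1}{r-1}}{\binom{t+2r}{r}^2}\cdot\frac{1}{pq}\,(1+o(1)).
\]
Using $\binom{t+2r-1}{r}/\binom{t+2r}{r} = (t+r)/(t+2r)$ and $\binom{t+2r-1}{r-1}/\binom{t+2r}{r} = r/(t+2r)$, the binomial factor is $\frac{r(t+r)}{(t+2r)^2}$, so the ratio is $\frac{r(t+r)}{(t+2r)^2}\cdot\frac{1}{pq}(1+o(1)) = \frac{r}{t}\cdot\frac{1}{p}(1+o(1))$, since $q = 1+o(1)$ and $(t+r)/(t+2r)^2 = 1/t\cdot(1+o(1))$. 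Now plug in the hypothesis $p \ge (r+\epsilon)/t$: the ratio is at most $\frac{r}{r+\epsilon}(1+o(1)) = \bigl(1 - \frac{\epsilon}{r+\epsilon}\bigr)(1+o(1)) \le \bigl(1 - \frac{\epsilon}{r+1}\bigr)(1+o(1))$, the last step using $r+\epsilon < r+1$. The case $(s,s') = (r+1,r)$ is symmetric: the same computation gives ratio $= \frac{(r+1)p}{t}(1+o(1))\cdot$(a bounded binomial factor that works out to $t/(r+1)\cdot1/t^2$ times $\ldots$) — more directly, one gets $\frac{(r+1)(t+1)}{(t+2r+1)(t+r+1)}\cdot pq\,(1+o(1)) = \frac{r+1}{t}\,p\,(1+o(1))$, and here one uses the upper bound $p \le (r+1-\epsilon)/t$ to conclude the ratio is at most $\frac{r+1-\epsilon}{r+1}(1+o(1)) = \bigl(1-\frac{\epsilon}{r+1}\bigr)(1+o(1))$.

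The main obstacle, such as it is, is purely bookkeeping: making sure the $\Theta(1/t)$ corrections in each of the four measures combine to a genuine $(1+o(1))$ in the final ratio and do not secretly swamp the $\epsilon/(r+1)$ gap — but since $\epsilon$ is a fixed constant while the corrections are $O(1/t) \to 0$, this is safe for $t$ large. A secondary point of care is the approximation $q^a = (1-p)^a = 1 + O(1/t)$ for fixed $a$, and that all binomial ratios $\binom{w+2s''}{s''}$ with $w \in \{t-1,t,t+1\}$ and $s''$ fixed differ only by factors $1+O(1/t)$ from their leading $t^{s''}/s''!$ behaviour; both are immediate. I would present the $(r,r-1)$ case in full and remark that $(r+1,r)$ is entirely analogous (indeed, it is the content already isolated in the second half of the proof of Claim~\ref{claim:2opt>pf+g/p}, just tracked to one more order in $p$), so the write-up stays short.
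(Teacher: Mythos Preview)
Your approach is exactly the paper's: use the leading-term asymptotic for each $\mu_p(\cF^w_{s''})$, form the ratio, simplify the binomial quotient, and bound via the appropriate endpoint of the $p$-range. The $(r,r-1)$ case is carried out correctly (modulo the harmless typo $p^{t+2r-1}$ for $p^{t+r-1}$), but in the $(r+1,r)$ sketch you have inverted the binomial factor: it should be $\dfrac{(t+2r+1)^2}{(r+1)(t+r+1)}$, so the ratio is $\dfrac{tpq}{r+1}(1+o(1))$, not $\dfrac{(r+1)p}{t}$ --- with the correct expression your final step via $p\le (r+1-\epsilon)/t$ gives $\le 1-\epsilon/(r+1)$ as you claim.
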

\begin{proof}
Recall that $u = t-1$ and $v = t+1$.  
First, let $(s,s')=(r+1,r)$. 
Note that $r+\epsilon<tp<r+1-\epsilon$.
Using \eqref{eqn:2det}, and in the last inequality that $tp < r+1-\epsilon$,  we get 
\begin{align*}
\frac{\mu_p(\cF^u_s)\mu_p(\cF^v_{s'})}{\mu_p(\cF^t_r)^2}&=
\frac{\binom{t+2r+1}{r+1}p^{t+r}q^{r+1}\binom{t+2r+1}{r}p^{t+r+1}q^{r}}{\left(\binom{t+2r}rp^{t+r}q^r\right)^2}(1+o(1))\\
&=\frac{(t+2r+1)^2pq}{(r+1)(t+r+1)}(1+o(1))
=\frac{tpq}{r+1}(1+o(1)) \\
&\leq\left(1-\frac{\epsilon}{r+1}\right)(1+o(1)).
\end{align*}

Next, for the case $(s,s')=(r,r-1)$ one can similarly check that  
\begin{align*}
\frac{\mu_p(\cF^u_{s})\mu_p(\cF^v_{s'})}{\mu_p(\cF^t_r)^2}
&=\frac{\binom{t+2r-1}{r}p^{t+r-1}q^{r}\binom{t+2r-1}{r-1}p^{t+r}q^{r-1}}{\left(\binom{t+2r}rp^{t+r}q^r\right)^2}(1+o(1))\\
&=\frac{(t+r)r}{(t+2r)^2pq}(1+o(1))=\frac{r}{tpq}(1+o(1))\\
&\leq\frac r{r+\epsilon}(1+o(1))<1-\frac{\epsilon}{r+1}. \qedhere
\end{align*}
\end{proof}
Combining~\eqref{eq:AB/F2} and~\eqref{eq:<1-e/(r+2)} 
contradicts \eqref{eq:AB_large(2)}, and hence Case 1 cannot happen.

\noindent $\bullet$ \emph{Case 2.}
Part (c) of Theorem~\ref{thm:stability} implies that
\begin{equation}\label{eq:AB/F2(2)}
\mu_p(\cA)\mu_p(\cB)
\leq\mu_p(\cF^{t}_s)^2.
\end{equation}

\begin{claim}\label{claim:for corollary2}
Let $0<\epsilon<1/2$. If $\frac{r+\epsilon}t\leq p\leq\frac{r+1-\epsilon}t$, then
\begin{equation}\label{eq:max F^t_r-1, F^t_r+1}
\frac{\max\{\mu_p(\cF^t_{r-1}),\mu_p(\cF^t_{r+1})\}}{\mu_p(\cF^t_r)}
\leq \left(1-\frac{\epsilon}{r+1}\right) (1+o(1)). 
\end{equation}
\end{claim}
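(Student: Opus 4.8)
The plan is to reduce the claim to the estimates already established in the proof of Claim~\ref{claim:for corollary2b}. Recall from \eqref{eqn:2det} that, with $p = \Theta(1/t)$ and $q = 1-p$,
\[
 \mu_p(\cF^t_r) = \binom{t+2r}{r} p^{t+r} q^r (1+o(1)),
\]
and that the analogous formula holds with $r$ replaced by $r\pm 1$ (for $r-1$ this requires $r\geq 1$; when $r=0$ the family $\cF^t_{-1}$ is empty and the $r-1$ term does not arise). So first I would write out the ratio $\mu_p(\cF^t_{r+1})/\mu_p(\cF^t_r)$ as
\[
 \frac{\mu_p(\cF^t_{r+1})}{\mu_p(\cF^t_r)} = \frac{\binom{t+2r+2}{r+1} p^{t+r+1} q^{r+1}}{\binom{t+2r}{r} p^{t+r} q^r}(1+o(1))
 = \frac{(t+2r+2)(t+2r+1)}{(r+1)(t+r+1)}\, pq \,(1+o(1)),
\]
which, using $t+2r+2 = t(1+o(1))$, $t+2r+1 = t(1+o(1))$, and $t+r+1 = t(1+o(1))$, simplifies to $\frac{t\,pq}{r+1}(1+o(1))$. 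Since $p \le \frac{r+1-\epsilon}{t}$ and $q = 1-o(1)$, this is at most $\frac{(r+1-\epsilon)}{r+1}(1+o(1)) = \bigl(1-\frac{\epsilon}{r+1}\bigr)(1+o(1))$.

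Next I would do the same for $\mu_p(\cF^t_{r-1})/\mu_p(\cF^t_r)$ (assuming $r\geq 1$), getting
\[
 \frac{\mu_p(\cF^t_{r-1})}{\mu_p(\cF^t_r)} = \frac{\binom{t+2r-2}{r-1} p^{t+r-1} q^{r-1}}{\binom{t+2r}{r} p^{t+r} q^r}(1+o(1)) = \frac{r(t+r)}{(t+2r)(t+2r-1)}\cdot\frac{1}{pq}(1+o(1)) = \frac{r}{t\,pq}(1+o(1)).
\]
Using $p \ge \frac{r+\epsilon}{t}$ and $q = 1-o(1)$, this is at most $\frac{r}{r+\epsilon}(1+o(1))$, and one checks $\frac{r}{r+\epsilon} < 1-\frac{\epsilon}{r+1}$ since this is equivalent to $\epsilon(r+1) < (r+\epsilon)\epsilon/\ldots$; more simply, $1 - \frac{r}{r+\epsilon} = \frac{\epsilon}{r+\epsilon} > \frac{\epsilon}{r+1}$. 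Taking the maximum of the two bounds gives \eqref{eq:max F^t_r-1, F^t_r+1}. (These are exactly the two computations that appear inside the proof of Claim~\ref{claim:for corollary2b}, just with the shift of the top coordinate rather than the shift of $t$, so one could even cite them directly.)

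I do not expect any real obstacle here; the statement is a routine consequence of the asymptotic formula \eqref{eqn:2det} together with the hypothesis on the range of $p$. The only mild care needed is the case distinction $r=0$ versus $r\geq 1$ (so that $\cF^t_{r-1}$ makes sense), and keeping track that all the polynomial-in-$t$ factors like $(t+2r+2)/(t+r+1)$ are $1+o(1)$ under our standing convention that $o(1)$ is asymptotic in $t$ with $r$ fixed. The inequality $\frac{r}{r+\epsilon} < 1 - \frac{\epsilon}{r+1}$ is strict and bounded away from equality for fixed $r,\epsilon$, so it comfortably absorbs the $(1+o(1))$ factor for large $t$.
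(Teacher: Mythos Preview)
Your proof is correct and is essentially identical to the paper's own argument: both compute the two ratios via \eqref{eqn:2det}, simplify the binomial quotients to $\frac{tpq}{r+1}(1+o(1))$ and $\frac{r}{tpq}(1+o(1))$ respectively, and then apply the bounds on $tp$ together with the elementary inequality $\frac{r}{r+\epsilon}<1-\frac{\epsilon}{r+1}$. The only cosmetic difference is that you treat the $r+1$ ratio first and explicitly note the $r=0$ case, whereas the paper handles $r-1$ first; your parenthetical remark that these are ``exactly the two computations'' inside Claim~\ref{claim:for corollary2b} is slightly imprecise (that claim deals with $\cF^{t\pm1}_s$ rather than $\cF^t_{r\pm1}$), but this does not affect the argument.
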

\begin{proof}
Here we have $u = v = t$. By \eqref{eqn:2det} we get 
\begin{align*}
\frac{\mu_p(\cF^t_{r-1})}{\mu_p(\cF^t_r)}&=
\frac{\binom{t+2r-2}{r-1}p^{t+r-1}q^{r-1}}
{\binom{t+2r}rp^{t+r}q^r}(1+o(1))
=\frac{r(t+r)}{(t+2r)(t+2r-1)pq}(1+o(1))\\
&=\frac{r(1+o(1))}{tpq}
\leq \frac{r(1+o(1))}{r+\epsilon}< 
1-\frac{\epsilon}{r+1},\end{align*}
and
\begin{align*}
\frac{\mu_p(\cF^t_{r+1})}{\mu_p(\cF^t_r)}&=
\frac{\binom{t+2r+2}{r+1}p^{t+r+1}q^{r+1}}
{\binom{t+2r}rp^{t+r}q^r}(1+o(1))
= \frac{(t+2r+2)(t+2r+1)pq}{(r+1)(t+r+1)}(1+o(1))\\
&=\frac{tpq(1+o(1))}{r+1}\leq \left(1-\frac{\epsilon}{r+1}\right)(1+o(1)). \qedhere
\end{align*}
\end{proof}
Combining~\eqref{eq:AB/F2(2)} and~\eqref{eq:max F^t_r-1, F^t_r+1} again contradicts \eqref{eq:AB_large(2)}, and hence Case 2 cannot happen.

\noindent $\bullet$ \emph{Case 3.}
By the AM-GM inequality, and \eqref{eq:AB_large(2)}, we have
\[ X\geq 2\sqrt{\mu_p(\cA)\mu_p(\cB)}\geq 2\sqrt{1-\delta}\, \mu(\cF^t_r). \]
Recalling $X,X_{\cF},X_{\Delta},X_{*}$ from the definition preceding Theorem~\ref{thm:stability}, an inclusion-exclusion argument gives that
\begin{align*}
 X_{\Delta} &=X_{\cF}+2X_{*} -X \leq 2\mu_p(\cF^t_r)+2X_{*}-2\sqrt{1-\delta}\mu_p(\cF^t_r) \\
 &=2(1-\sqrt{1-\delta})\mu_p(\cF^t_r)+2X_{*} .
\end{align*}
This together with $X_{*} \leq\epsilon_1 X_{\Delta} $ from (b) of Theorem~\ref{thm:stability} yields
\[X_{\Delta} \leq 2(1-\sqrt{1-\delta})\mu_p(\cF^t_r)+2\epsilon_1 X_{\Delta}.\]

Since $0<\epsilon_1<1/2$ we have
\begin{equation}\label{eq:bndXdelta}
X_{\Delta} \leq\frac{2(1-\sqrt{1-\delta})}{1-2\epsilon_1}\mu_p(\cF^t_r)=C(1-\sqrt{1-\delta})\mu_p(\cF^t_r).
\end{equation}
This proves \eqref{eq:<f(gamma)}, and completes the proof of Theorem~\ref{thm:weakstability}.

\section{Preliminary tools}\label{sec:prelim}

\subsection{Walks corresponding to subsets}
It is useful to regard a set $F\subset [n]$ as an $n$-step walk starting at the
origin $(0,0)$ of the two-dimensional grid $\mathbb{Z}^2$ as follows.
If $i\in F$, then the $i$-th step is \emph{up} from $(x,y)$ to $(x,y+1)$.
Otherwise, the $i$-th step is \emph{right} from $(x,y)$ to $(x+1,y)$.
From now on, we refer to $F\subset [n]$ as a set or a walk. 

\begin{figure}[h]
\begin{center}
\includegraphics{./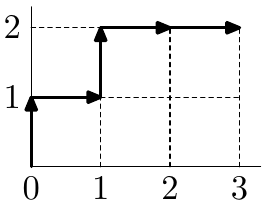}  
\caption{Walk corresponding to $\{1,3\} \subset [5]$}
\end{center}
\end{figure} 

A walk $F$ reaches the point $(j,j+\ell)$ on the line $y = x + \ell$
if and only if $|F\cap[\ell+2j]|=\ell+j$.
 For an integer $\ell \geq 0$ let
\begin{equation*}\label{eq:cFl}
  \cF^{\ell}:= \bigcup_{j \geq 0} \cF_j^\ell =  \Big\{F\subset[n]: \, \big|F\cap[\ell+2j]\big|\geq \ell+j \text{ for some }j\geq 0\Big\}  
\end{equation*}
be the family of all walks that hit the line $y = x + \ell$,
where $\cF^{\ell}_j$ is defined in (1).

We further define
\begin{align*}\tilde\cF^{\ell}&:=\cF^{\ell+1}=\big\{F\in\cF^{\ell}: \, F \text{ hits $y = x + \ell+1$}\big\},\\
\dot\cF^{\ell}&:=\big\{F\in\cF^{\ell}: \, F \text{ hits $y = x + \ell$ exactly once, but does not hit $y=x+\ell+1$}\big\}, \\
\ddot\cF^{\ell}&:=
 \big\{F\in\cF^{\ell}: \, F \text{ hits $y = x + \ell$ at least twice, but does not hit $y=x+\ell+1$}\big\}.
\end{align*}
This gives a partition $\cF^{\ell}=\tilde\cF^{\ell} \sqcup \dot\cF^{\ell} \sqcup\ddot\cF^{\ell}$.
Let $\dot\cF^{\ell}_i:=\dot\cF^{\ell}\cap\cF^{\ell}_i$ and 
$\ddot\cF^{\ell}_i:=\ddot\cF^{\ell}\cap\cF^{\ell}_i$.  

One can estimate the measure of the families defined above using random walks, and this is one of the main ideas for proving our results.
Consider an infinite random walk in the plane starting from the 
origin, each step of which is a random variable, independent of other steps, going up with probability $p$ and right with probability
$q:=1-p$.
The product measure of the family of walks that satisfy some property is
the probability of a random walk satisfying that property.

\begin{example}\label{ex:walkshittingpoint}
 Let $\cF$ be the family of all walks that hit the point $(s, u+s)$ for
 integers $u$ and $s$. There are $\binom{u+2s}{s}$ different walks from
 $(0,0)$ to $(s, u+s)$, and a random walk is any one of these
 with probability $p^{u+s}q^{s}$, so $\mu_p (\cF) = \binom{u+2s}{s}p^{u+s}q^{s}$.
\end{example}

With a little more work, one can show that the infinite random walk
hits the line $y=x+\ell$ with probability precisely $\alpha^{\ell}$, where
$\alpha:=p/q$. Based on this fact,
one can show the following.

\begin{lemma}[Lemma~2.2 in \cite{FLST}]\label{lemma:alpha^l bound}
For any positive integer $\ell$, the following hold.
\begin{enumerate}
\item We have $\mu_p (\cF^{\ell})\leq \alpha^{\ell}$, $\mu_p (\tilde\cF^{\ell})\leq \alpha^{\ell+1}$,
and $\mu_p (\ddot\cF^{\ell})\leq \alpha^{\ell+1}$.
\item If $\cF\subset 2^{[n]}$
and no walk in $\cF$ hits the line $y=x+\ell$, then
$\mu_p (\cF)<1-\alpha^{\ell}+o(1)$, \\ where $o(1)\to 0$ as $n\to\infty$.
\end{enumerate}
\end{lemma}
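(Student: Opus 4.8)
The final statement to prove is Lemma~\ref{lemma:alpha^l bound}, which is quoted from \cite{FLST}. Let me sketch how I would prove it.

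\textbf{Setup and the key probabilistic fact.} The plan is to first establish the basic fact mentioned in the excerpt: an infinite random walk, going up with probability $p$ and right with probability $q = 1-p$ (with $p < q$, which holds since $p = \Theta(1/t)$), hits the line $y = x + \ell$ with probability exactly $\alpha^\ell$ where $\alpha = p/q$. For $\ell = 1$, let $\beta$ denote the probability of ever hitting $y = x+1$. Conditioning on the first step: with probability $p$ the walk goes up and immediately hits the line; with probability $q$ it goes right, and then must gain a net $+2$ in the up-direction, which by the strong Markov property and translation invariance happens with probability $\beta^2$. So $\beta = p + q\beta^2$, giving $\beta \in \{1, p/q\}$; since $p < q$ the walk has negative drift in the up-right sense, so by the strong law of large numbers the walk almost surely drifts below any line $y = x + \ell$ eventually, forcing $\beta = p/q = \alpha < 1$. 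By the strong Markov property again, hitting $y = x+\ell$ requires $\ell$ successive ``first passages'' each of probability $\alpha$, so the probability is $\alpha^\ell$.

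\textbf{Deriving the three bounds in (i).} Now $\mu_p(\cF^\ell)$ is exactly the probability that a random walk on $[n]$ (a truncated walk) hits $y = x+\ell$; since an $n$-step walk hitting the line is a sub-event of the infinite walk hitting the line, $\mu_p(\cF^\ell) \leq \alpha^\ell$. For $\tilde\cF^\ell = \cF^{\ell+1}$ the same argument gives $\mu_p(\tilde\cF^\ell) \leq \alpha^{\ell+1}$ directly. For $\ddot\cF^\ell$: a walk in $\ddot\cF^\ell$ hits $y = x+\ell$ at least twice; after the first hit, to return to the line a second time the walk must (from a point on the line) take a right step and then recover $+1$, or take an up step (leaving onto $y = x+\ell+1$) — but walks in $\ddot\cF^\ell$ do \emph{not} hit $y=x+\ell+1$, so from the first hit point the walk must go right and then come back to the line. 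The probability of ever returning to the line from a point on it, without first going above it, is $q\alpha = q \cdot (p/q) = p$... let me be careful: the event is ``from a point on the line, eventually return to the line.'' Returning requires a right step (prob $q$) followed by a first passage up by $+1$ (prob $\alpha$), OR an up step — but that's excluded here. Actually for the \emph{plain} return probability it would be $p + q\alpha = p + q(p/q) = 2p$, but excluding the up-step path it is $q\alpha = p$. Hence $\mu_p(\ddot\cF^\ell) \leq \alpha^\ell \cdot p \leq \alpha^\ell \cdot \alpha = \alpha^{\ell+1}$ using $p \leq \alpha = p/q$ (valid since $q \leq 1$). I would present this return-probability computation cleanly via the strong Markov property.

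\textbf{Part (ii) and the main obstacle.} For (ii): if no walk in $\cF$ hits $y = x+\ell$, then $\cF \subseteq 2^{[n]} \setminus \cF^\ell$, so $\mu_p(\cF) \leq 1 - \mu_p(\cF^\ell_{\text{as } n\text{-walks}})$. The subtlety — and what I expect to be the main obstacle — is that $\mu_p(\cF^\ell)$ for the \emph{truncated} $n$-step walk is strictly less than $\alpha^\ell$ (the infinite walk may hit the line only after step $n$), so we only get $\mu_p(\cF) \leq 1 - \mu_p^{(n)}(\cF^\ell)$ and must show $\mu_p^{(n)}(\cF^\ell) \to \alpha^\ell$ as $n \to \infty$. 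This follows because the probability that the infinite walk hits $y = x+\ell$ for the first time after step $n$ tends to $0$ as $n \to \infty$ (the hitting time is a.s. finite on the hitting event, and the event has probability $\alpha^\ell$), so $\mu_p^{(n)}(\cF^\ell) = \alpha^\ell - o(1)$, giving $\mu_p(\cF) \leq 1 - \alpha^\ell + o(1)$. I would organize the write-up as: (1) the $\beta = \alpha$ computation with the drift argument for selecting the root; (2) the strong Markov ``stacking'' to get $\alpha^\ell$ and the return-probability $p$ for $\ddot\cF^\ell$; (3) the truncation/convergence argument for (ii). Since this is a cited lemma, a reasonable alternative is simply to refer the reader to the proof in \cite{FLST}, but the above is the self-contained route.
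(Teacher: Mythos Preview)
The paper does not actually prove this lemma; it is simply quoted as Lemma~2.2 of \cite{FLST}. Your self-contained sketch is correct: the first-passage equation $\beta = p + q\beta^2$ together with the drift argument correctly identifies the hitting probability as $\alpha^\ell$; your bound $\mu_p(\ddot\cF^\ell) \leq \alpha^\ell \cdot q \cdot \alpha = p\,\alpha^\ell \leq \alpha^{\ell+1}$ via ``first hit, then forced right step, then return from one below'' is sound (a walk in $\ddot\cF^\ell$ cannot step up immediately after its first hit, as it never reaches $y = x + \ell + 1$); and for (ii) the truncation argument $\mu_p^{(n)}(\cF^\ell) = \alpha^\ell - o(1)$ is exactly what is needed. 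Your observation that the derivation presumes $p<q$ is apt --- this is the only regime in which the bounds have content, and it holds throughout the paper. Since the paper offers no proof here, your writeup adds content rather than diverging from anything in the text.
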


We will also use the following fact.
\begin{fact}[Lemma 2.13 in~\cite{FLST}]\label{fact:reflection}
  The number of walks from $(0,0)$ to $(s,\ell+s)\in\N^2$ that do not hit the
  line $y=x+\ell+1$ is $\binom{\ell+2s}s-\binom{\ell+2s}{s-1}$.
\end{fact}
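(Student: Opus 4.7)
The plan is to use the reflection principle, which is the standard tool for ballot-type enumeration problems of this form. The key observation is that every walk from $(0,0)$ to $(s,\ell+s)$ uses exactly $s$ right-steps and $\ell+s$ up-steps, so the total number of such walks, with no restriction, is $\binom{\ell+2s}{s}$. It therefore suffices to show that the number of \emph{bad} walks, i.e., those from $(0,0)$ to $(s,\ell+s)$ that do touch the line $y=x+\ell+1$ at some point, equals $\binom{\ell+2s}{s-1}$.

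To count the bad walks, I would set up a bijection between bad walks and unrestricted walks from a reflected starting point to $(s,\ell+s)$. Given a bad walk $W$, let $P$ be the first lattice point at which $W$ meets the line $y=x+\ell+1$. Reflect the initial segment of $W$ from $(0,0)$ to $P$ across this line, leaving the tail from $P$ to $(s,\ell+s)$ untouched. Reflection across $y=x+\ell+1$ sends a point $(x,y)$ to $(y-\ell-1,\,x+\ell+1)$, so $(0,0)$ is mapped to $(-\ell-1,\,\ell+1)$, and each up-step is exchanged with a right-step in the reflected part. The result is a lattice walk from $(-\ell-1,\,\ell+1)$ to $(s,\ell+s)$. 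Conversely, any walk from $(-\ell-1,\,\ell+1)$ to $(s,\ell+s)$ must cross the line $y=x+\ell+1$ (since its endpoints lie on opposite sides), and reflecting its initial segment up to the first crossing yields a bad walk from $(0,0)$ to $(s,\ell+s)$; this is clearly an inverse, so the map is a bijection.

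It then remains a routine count to check that the number of lattice walks from $(-\ell-1,\,\ell+1)$ to $(s,\ell+s)$ equals $\binom{\ell+2s}{s-1}$: such a walk consists of $s-(-\ell-1)=s+\ell+1$ right-steps and $(\ell+s)-(\ell+1)=s-1$ up-steps, for a total of $\ell+2s$ steps, of which we choose $s-1$ to be up. Subtracting from the total gives
\[
\binom{\ell+2s}{s}-\binom{\ell+2s}{s-1},
\]
as required. The only delicate point of the argument is the identification of the correct reflected starting point and the verification that the reflection map is genuinely a bijection; once that is confirmed, the rest is bookkeeping.
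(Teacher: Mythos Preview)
Your proof is correct and is exactly the standard Andr\'e reflection argument. Note that the paper does not actually prove this fact; it is quoted without proof as Lemma~2.13 of \cite{FLST}, so there is nothing to compare against here beyond observing that your argument is the classical one that underlies the cited result.
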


\subsection{Dual walks and some facts}

For $A\subset [n]$, let $(A)_i$ be the $i$-th smallest element of $A$. 
For $A,B\subset [n]$, we say that \emph{$A$ shifts to $B$}, denoted by
$A\shiftsto B$, if $|A|\leq |B|$ and $(A)_i\geq (B)_i$ for each $i\leq |A|$. 
As an example, we have $\{2,4,6\}\shiftsto \{1,4,5,7\}$.
Viewing $A$ and $B$ as walks,  $A\shiftsto B$ means that the walk $B$ is above 
the walk $A$ on the grid $\Z^2$.
The \emph{dual} of $A\subset[n]$ with respect to $t$ is defined by 
\begin{equation}\label{eq:dual}
\dual(A):=[(A)_t-1]\cup ([n]\setminus A).     
\end{equation}
See Figure~\ref{fig:dual1} for an example of a walk and its dual. 

Note that $|A \cap \dual(A)| = t-1$. Furthermore, $\dual(A)$ is the shift minimal
walk satisfying this condition, and hence, if $|A\cap B|=t-1$ then
$B\to\dual(A)$. As walks, $\dual(A)$ is obtained by
reflecting $A$ across the line $y=x+(t-1)$ and 
replacing the part $x<0$ with the path connecting $(0,0)$ and
$(0,(A)_t-1)$.
The following fact is immediate.
\begin{fact}[Facts 2.8 and 2.9 in~\cite{FLST}]\label{fact:dual} $\phantom{}$
\begin{itemize}
\item[(i)] Let $\cF$ be a shifted, inclusion maximal family in $2^{[n]}$. 
If $F\in \cF$ and $F\shiftsto F'$, then $F'\in \cF$.
\item[(ii)] Let $\cA$ and $\cB$ be cross $t$-intersecting families. 
If $A\in \cA$, then $\dual(A)\not\in \cB$.
\end{itemize}
\end{fact}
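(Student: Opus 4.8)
The plan is to treat the two parts separately; part (ii) is essentially immediate from the displayed identity $|A\cap\dual(A)|=t-1$, while part (i) needs a short combinatorial argument turning the relation $\shiftsto$ into a sequence of shifting operations followed by one use of inclusion maximality.

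For (i), I would first unpack the hypothesis $F\shiftsto F'$: writing $k=|F|\leq|F'|=m$, it says $(F)_i\geq (F')_i$ for every $i\leq k$. Put $F_0:=\{(F')_1,\dots,(F')_k\}$, the $k$ smallest elements of $F'$, so that $F_0\subseteq F'$ and $(F)_i\geq(F_0)_i=(F')_i$ for each $i\leq k$. The heart of the argument is the claim that $F_0$ is reachable from $F$ by a sequence of shifting operations. To see this, replace the elements $(F)_1,(F)_2,\dots,(F)_k$ one at a time, in increasing order of the index $i$. The invariant is that after step $i-1$ the current set equals $\{(F')_1,\dots,(F')_{i-1}\}\cup\{(F)_i,\dots,(F)_k\}$; this is a genuine $k$-element set since $(F')_1<\dots<(F')_{i-1}<(F')_i\leq(F)_i<(F)_{i+1}<\dots<(F)_k$. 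If $(F')_i<(F)_i$, then $(F')_i$ lies neither among the $(F')_j$ with $j<i$ (they are strictly smaller) nor among the $(F)_j$ with $j\geq i$ (they are strictly larger), so $(F')_i$ is not in the current set; hence the shifting operation replacing the element $(F)_i$ by the smaller element $(F')_i$ applies and produces the updated set, maintaining the invariant. (If $(F')_i=(F)_i$ do nothing.) After step $k$ the current set is exactly $F_0$. Since $\cF$ is shifted and $F\in\cF$, each of the intermediate sets, in particular $F_0$, belongs to $\cF$; and since $\cF$ is inclusion maximal and $F_0\subseteq F'$, we conclude $F'\in\cF$.

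For (ii), I would argue by contradiction. Suppose $\dual(A)\in\cB$. From $\dual(A)=[(A)_t-1]\cup([n]\setminus A)$ we get $A\cap\dual(A)=A\cap[(A)_t-1]$, which is precisely the set of elements of $A$ lying below its $t$-th smallest element, namely $(A)_1,\dots,(A)_{t-1}$; hence $|A\cap\dual(A)|=t-1$. But $A\in\cA$, $\dual(A)\in\cB$, and the cross $t$-intersecting property of $\cA,\cB$ force $|A\cap\dual(A)|\geq t$, a contradiction. Here we use implicitly that $|A|\geq t$, which we may assume since otherwise $\cB=\emptyset$ and the statement is vacuous.

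I expect the only real obstacle to be the bookkeeping in part (i): one must fix the order in which the elements of $F$ are lowered so that every intermediate set is again a $k$-element set not already containing the element being introduced, and one must note that the definition of a shifted family, phrased for a single operation, can be iterated along $F\in\cF$. Part (ii) presents no difficulty once the intersection size $t-1$ is computed from the definition of $\dual$.
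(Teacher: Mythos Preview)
Your proof is correct. Note that the paper does not actually prove this fact: it is imported from \cite{FLST} with the remark that it ``is immediate,'' and the only hint the paper gives is the sentence preceding the statement, namely that $|A\cap\dual(A)|=t-1$, which is exactly the computation you carry out for part (ii). Your treatment of part (i), reducing $F\shiftsto F'$ to a chain of elementary shifting operations followed by one application of inclusion maximality, is the standard argument and is carried out cleanly; the invariant you maintain and the check that the introduced element is absent from the current set are exactly the points that need care, and you handle them correctly.
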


\section{Setup for the proof of Theorem~\ref{thm:stability} }

The rest of the paper is dedicated to the proof of Theorem \ref{thm:stability}.

\subsection{Basic Asymptotics}\label{sec:asymp}
In this section we talk about the assumptions we will use in asymptotic arguments.
From now on, we let $r\geq 0$ be a fixed integer, and if $r=0$ then let $\epsilon > 0$ also be fixed.  Let
let $t$ be a sufficiently large integer depending on $r$ and $epsilon$. 
Since we are interested in the maximum possible measure $\mu_p (\cA)\mu_p (\cB)$
over all $t$-nice pairs $\{\cA,\cB\}$, where $\cA,\cB\subset 2^{[n]}$, 
and this value is non-decreasing in $n$ (see Lemma~2.12 in \cite{FLST}), 
we may assume that $n$ is sufficiently large
compared to $t$. Consequently, we assume that
\[ 0\leq r \ll t\ll n. \]

First assume that $r\geq 1$,
and $p$ satisfies 
\begin{equation*}\label{eq:p}\frac{r}{t+2r-1}\leq p\leq\frac {r+1}{t+2r+1},
\end{equation*}
which gives 
\begin{equation}\label{eq:tp} 
p=\Theta(1/t)=o(1) \;\text{ and }\; r-o(1) < tp < r+1,
\end{equation}
where $o(1)$ goes to $0$ as $t \to \infty$.
This implies that
$\left(1-\frac{r+1}t\right)^t < (1-p)^t < \left(1-\frac{r-o(1)}t\right)^t$, 
and using $q=1-p$ it follows that
$e^{-(r+1)}(1-o(1)) < q^t < e^{-r}(1+o(1))$;
in particular, $ q^t=\Theta(1)$. We also have that
\begin{equation}\label{eq:alpha2}
  \alpha=\frac pq=p+p^2+\cdots=p+O(p^2)  \hskip 1em \mbox{ and } \hskip 1em
  \alpha^t = \frac{p^t}{q^t} = \Theta(p^t).
\end{equation}  

To simplify \eqref{weight cFtr1} we write
\begin{equation}\label{weight cFtr}
  \mu_p (\cF_r^t) = \sum_{i = 0}^r \binom{t+2r}{i}p^{t + 2r - i}q^i= \binom{t+2r}rp^{t+r}q^r (1 + \Theta(1/t^2)) = \frac{(tp)^r}{r!}p^t (1 + \Theta(1/t^2)).
\end{equation} 
Using \eqref{eq:tp}, this reads
\begin{equation}\label{weight cFtr2}
  \mu_p (\cF_r^t) = \Theta(p^t).
\end{equation}
Suppose now that $r=0$ and that $p > \epsilon/t$.  Then \eqref{eq:alpha2} and \eqref{weight cFtr2} trivially hold.
In particular, $\mu_p (\cA)\mu_p (\cB) = \Theta(p^{2t})$
holds for any pair $\{\cA, \cB\}$ in \eqref{eq:extremal families} 
for $r\geq 1$, as well as $\{\cA, \cB\}$ in 
$\{\cF^t_0,\cF^t_0\}$, $\{\cF^t_1,\cF^t_1\}$, and $\{\cF^{t-1}_1,\cF^t_0\}$.

\subsection{Initial reductions and definition of cases}\label{subsec:setup}

In this section, we make initial reductions for the proof of
Theorem~\ref{thm:stability}, and introduce parameters by which we can
break the remainder of the proof into cases.  

We have already fixed $r\geq 0$. Let $\epsilon, \epsilon_1\in(0,1/2)$ and
$\delta_1\in(0,1/(r+2))$ be given. We will choose $t_0$, depending on these
constants, to be sufficiently large. Only $\delta_1$ matters in this and the next sections,
and then $\epsilon$ or $\epsilon_1$ will get involved in 
Sections~\ref{sec:extremal cases} and \ref{subsec:s=s'+1}.
Choose $n\gg t\geq t_0$.

Let $\cA,\cB\subset 2^{[n]}$ be $t$-nice families. 
Where $\lambda(\cF)$, for $\cF\subset 2^{[n]}$, is the maximum $\ell$ 
such that $\cF \subset \cF^\ell$, let
\[ u:= \lambda(\cA) \mbox{ and } v:= \lambda(\cB). \]
Without loss of generality, we may assume that $u \leq v$. 
Since $\cA$ and $\cB$ are $t$-nice,
we have $u+v\geq 2t$, see e.g., Lemma~2.11 of \cite{FLST}.

Since $\cA\subset\cF^u$, we have a partition 
$\cA=\tilde\cA\sqcup\dot\cA\sqcup\ddot\cA$, where
$\tilde\cA=\cA\cap\tilde\cF^u$, $\dot\cA=\cA\cap\dot\cF^u$, and
$\ddot\cA=\cA\cap\ddot\cF^u$. 
Similarly, we have a partition 
$\cB=\tilde\cB\sqcup\dot\cB\sqcup\ddot\cB\subset\cF^v$.

\begin{lemma}\label{lemma:ttd}
If either $u+v>2t$, $\dot\cA=\emptyset$, or $\dot\cB=\emptyset$, then
$\mu_p(\cA)\mu_p(\cB)\ll\mu_p(\cF^t_r)^2$.
\end{lemma}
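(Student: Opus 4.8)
\textbf{Proof proposal for Lemma~\ref{lemma:ttd}.}
The plan is to bound each of the three quantities $\mu_p(\cA)$ and $\mu_p(\cB)$ using the random-walk estimates of Lemma~\ref{lemma:alpha^l bound}, and to compare the resulting products against $\mu_p(\cF^t_r)^2=\Theta(p^{2t})=\Theta(\alpha^{2t})$ from \eqref{weight cFtr2} and \eqref{eq:alpha2}. Since $\cA\subset\cF^u$ and $\cB\subset\cF^v$, Lemma~\ref{lemma:alpha^l bound}(i) already gives $\mu_p(\cA)\le\alpha^u$ and $\mu_p(\cB)\le\alpha^v$, hence $\mu_p(\cA)\mu_p(\cB)\le\alpha^{u+v}$. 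The point is that whenever $u+v>2t$, say $u+v\ge 2t+1$, we gain an extra factor $\alpha=\Theta(p)=o(1)$, so $\mu_p(\cA)\mu_p(\cB)\le\alpha^{2t+1}=o(\alpha^{2t})=o(\mu_p(\cF^t_r)^2)$, which settles the first case. So for the remaining two cases we may assume $u+v=2t$, and since $u\le v$ and $u+v\ge 2t$ this forces $u=v=t$ (or more generally $u\le t\le v$; the key is that the naive bound $\alpha^{u+v}=\alpha^{2t}$ is now only of the right \emph{order}, not smaller, so we must extract the extra saving from the hypothesis $\dot\cA=\emptyset$ or $\dot\cB=\emptyset$).

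For the case $\dot\cA=\emptyset$: using the partition $\cA=\tilde\cA\sqcup\dot\cA\sqcup\ddot\cA$ with $\dot\cA=\emptyset$, we have $\cA\subset\tilde\cF^u\sqcup\ddot\cF^u$, so by Lemma~\ref{lemma:alpha^l bound}(i), $\mu_p(\cA)\le\mu_p(\tilde\cF^u)+\mu_p(\ddot\cF^u)\le 2\alpha^{u+1}$. Combining with $\mu_p(\cB)\le\alpha^v$ we get $\mu_p(\cA)\mu_p(\cB)\le 2\alpha^{u+v+1}\le 2\alpha^{2t+1}=o(\alpha^{2t})$, hence $\ll\mu_p(\cF^t_r)^2$. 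The case $\dot\cB=\emptyset$ is identical with the roles of $u$ and $v$ exchanged: $\mu_p(\cB)\le 2\alpha^{v+1}$ and $\mu_p(\cA)\le\alpha^u$, giving the same conclusion. In all three cases the gain is a single factor of $\alpha=\Theta(1/t)=o(1)$, which is exactly what ``$\ll$'' requires once $t$ is large.

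The only thing that needs a little care, and the step I expect to be the mildest obstacle, is making sure the ``$o(1)$'' terms hiding in Lemma~\ref{lemma:alpha^l bound} (which are of the form ``$o(1)$ as $n\to\infty$'') do not interfere; but part (i) of that lemma, which is all we use here, has clean bounds $\alpha^\ell$, $\alpha^{\ell+1}$, $\alpha^{\ell+1}$ with no error term, so this is a non-issue. One should also double-check the edge case where $u$ or $v$ could be $0$ or where $\tilde\cF^u$, $\ddot\cF^u$ are defined for $\ell=0$; but since $\cA,\cB$ are $t$-nice cross $t$-intersecting families we have $u,v\ge 1$ (indeed $u+v\ge 2t$), so Lemma~\ref{lemma:alpha^l bound} applies verbatim. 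Everything else is the comparison $\alpha^{2t+1}\ll\alpha^{2t}\asymp\mu_p(\cF^t_r)^2$ coming from \eqref{weight cFtr2} and $\alpha=o(1)$.
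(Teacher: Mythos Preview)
Your proof is correct and follows essentially the same approach as the paper: bound $\mu_p(\cA)\le\alpha^u$, $\mu_p(\cB)\le\alpha^v$ via Lemma~\ref{lemma:alpha^l bound}(i), pick up an extra factor of $\alpha$ either from $u+v\ge 2t+1$ or from $\mu_p(\tilde\cA)+\mu_p(\ddot\cA)\le 2\alpha^{u+1}$ when $\dot\cA=\emptyset$, and compare with $\mu_p(\cF^t_r)^2=\Theta(\alpha^{2t})$. Your aside that ``$u+v=2t$ forces $u=v=t$'' is a slip (only $u\le t\le v$ follows), but you immediately note this and it does not affect the argument.
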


\begin{proof}
Suppose that $u+v>2t$. Lemma~\ref{lemma:alpha^l bound} yields that
$\mu_p (\cA)\leq \alpha^u$ and $\mu_p (\cB)\leq \alpha^v$,
so by \eqref{eq:alpha2} and \eqref{weight cFtr2} it follows
$\mu_p (\cA)\mu_p (\cB)\leq \alpha^u\alpha^v \leq
\alpha^{2t+1} = O(p^{2t+1})
\ll  \mu_p (\cF^t_r)^2$.

Next suppose that $\dot\cA=\emptyset$, then 
Lemma~\ref{lemma:alpha^l bound} gives 
$\mu_p (\cA)=\mu_p (\tilde\cA)+\mu_p (\ddot\cA)\leq 2\alpha^{u+1}$, and thus
$\mu_p (\cA)\mu_p (\cB)\leq 2\alpha^{u+1}\alpha^v = 2\alpha^{2t+1}\ll\mu_p (\cF^t_r)^2$.
The same holds if $\dot\cB=\emptyset$. 
\end{proof}

Lemma~\ref{lemma:ttd} guarantees the existence of $t_0$ depending on $r$
and $\delta_1$
such that if $t\geq t_0$ and $\cA$ and $\cB$ are $t$-nice families satisfying \eqref{eq:AB_large},
then we necessarily have that $u+v=2t$, $\dot\cA\neq\emptyset$, and 
$\dot\cB\neq\emptyset$.
Moreover, one can show the following.
\begin{lemma}[Lemma~3.2 in \cite{FLST}]\label{lemma:A subset F^u_s} 
Suppose that $\dot\cA\neq\emptyset$ and $\dot\cB\neq\emptyset$. Then,
there exist unique non-negative integers $s$ and $s'$ such that
$\dot\cA\sqcup\ddot\cA\subset\cF_s^u$ and
$\dot\cB\sqcup\ddot\cB\subset\cF_{s'}^v$.
Moreover, $s-s'=(v-u)/2$. In particular, $s\geq s'$.
\end{lemma}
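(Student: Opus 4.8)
\textbf{Proof proposal for Lemma~\ref{lemma:A subset F^u_s}.}

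The plan is to prove the statement for $\cA$; the argument for $\cB$ is identical. Since $\dot\cA\neq\emptyset$, pick any $A\in\dot\cA$; by definition $A$ hits the line $y=x+u$ exactly once and does not hit $y=x+u+1$, so there is a well-defined integer $s\geq 0$ with $A\in\cF^u_s$, i.e.\ $|A\cap[u+2s]|=u+s$ and the walk $A$ touches $y=x+u$ only at the lattice point $(s,u+s)$. I first want to show that this $s$ does not depend on the choice of $A\in\dot\cA$, and moreover that every set in $\dot\cA\sqcup\ddot\cA$ lies in $\cF^u_s$. The key tool is Fact~\ref{fact:dual}: since $\cB\neq\emptyset$ and $\dot\cB\neq\emptyset$, pick $B\in\dot\cB$ hitting $y=x+v$ exactly once at $(s',v+s')$; then $\dual_t(A)\notin\cB$ for every $A\in\cA$, and dually $\dual_t(B)\notin\cA$.

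The heart of the argument is a counting/reflection estimate forcing $u+2s$ and $v+2s'$ to be pinned down. Here is the mechanism. Take $A\in\dot\cA$ with $A\in\cF^u_s$. Its dual $\dual_t(A)=[(A)_t-1]\cup([n]\setminus A)$ satisfies $|A\cap\dual_t(A)|=t-1$, and as a walk $\dual_t(A)$ is the reflection of $A$ across $y=x+(t-1)$. Because $A$ reaches $(s,u+s)$, its reflection reaches the point obtained by reflecting $(s,u+s)$, which lands on the line $y=x+(2(t-1)-u)=x+(2t-2-u)$. Since $B\to\dual_t(A)$ is forbidden — more precisely $\dual_t(A)\notin\cB$, and $\cB$ is shifted and inclusion maximal so by Fact~\ref{fact:dual}(i) no walk below $\dual_t(A)$ can be used to push $\dual_t(A)$ into $\cB$ — we learn that $\dual_t(A)$ does not lie weakly above any member of $\cB$; combined with $\cB\subset\cF^v$ this forces $\dual_t(A)$ to fail to reach $y=x+v$. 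Reflecting back, this says $A$ cannot reach too high relative to $u$; quantitatively it pins $v$ down in terms of the reflection line, giving $2t-2-u < v$, hence $u+v\geq 2t-1$, and combined with the known parity/bound $u+v\geq 2t$ (and, once \eqref{eq:AB_large} is in force, $u+v=2t$ from Lemma~\ref{lemma:ttd}) this is the rigidity we need. Running the symmetric argument with $B$ and $\dual_t(B)\notin\cA$, and comparing the two reflection lines, yields $s-s'=(v-u)/2$; in particular when $u+v=2t$ we get $s-s'=(v-u)/2\geq 0$, i.e.\ $s\geq s'$. Uniqueness of $s$ and $s'$ is then automatic, since $\dot\cF^u_i\cap\dot\cF^u_j=\emptyset$ for $i\neq j$ and we have shown every element of $\dot\cA\sqcup\ddot\cA$ reaches $y=x+u$, hence lies in some $\cF^u_i$, while the reflection bound forbids $i>s$ and the choice of $A$ realizing $s$ forbids a smaller common value.

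To finish, I would argue that in fact \emph{all} of $\dot\cA\sqcup\ddot\cA$ sits in the single family $\cF^u_s$: an element $A'\in\dot\cA\sqcup\ddot\cA$ hits $y=x+u$ but not $y=x+u+1$, so it lies in $\cF^u_i$ for exactly one $i\geq 0$; applying the dual–reflection bound to $A'$ against the fixed $B\in\dot\cB$ gives the upper bound $i\leq s$, and applying it to the fixed $A$ against $A'$-reflected (using $\dual_t(A')\notin\cB$ together with $B\in\cB$ reaching $(s',v+s')$) gives the matching lower bound $i\geq s$, so $i=s$. The same reasoning with the roles of $\cA$ and $\cB$ interchanged gives $\dot\cB\sqcup\ddot\cB\subset\cF^v_{s'}$.

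The main obstacle is the reflection bookkeeping: one must carefully track how the single hit of $A$ at $(s,u+s)$ on $y=x+u$ transfers, under the dual operation $\dual_t$, to a constraint on which line $\dual_t(A)$ can reach, and then translate the non-membership $\dual_t(A)\notin\cB$ (via shiftedness and inclusion-maximality of $\cB$) into the clean numerical inequality relating $u$, $v$, $s$, $s'$ and $t$. Once the geometry of the dual walk is set up correctly — essentially that reflecting across $y=x+(t-1)$ sends the line $y=x+\ell$ to $y=x+(2t-2-\ell)$ — the rest is forced. I expect this lemma is quoted verbatim from \cite{FLST}, so the cleanest route is to cite Lemma~3.2 of \cite{FLST} and only sketch the dual-reflection idea as above for completeness.
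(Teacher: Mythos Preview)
The paper does not give a proof of this lemma; it is cited as Lemma~3.2 of \cite{FLST}, and your suggestion to do exactly that is what the paper does.

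The sketch you offer, however, has genuine gaps. The central step --- ``$\dual(A)\notin\cB$ \ldots\ forces $\dual(A)$ to fail to reach $y=x+v$'' --- does not follow: $\cB$ is typically a proper subfamily of $\cF^v$, so non-membership in $\cB$ says nothing about whether a walk reaches $y=x+v$. What Fact~\ref{fact:dual}(i) actually yields from $\dual(A)\notin\cB$ is that $B'\not\to\dual(A)$ for every $B'\in\cB$; this constrains the members of $\cB$, not $\dual(A)$ itself. Even accepting your version, the resulting inequality $2t-2-u<v$ is vacuous (we already know $u+v\geq 2t$), and more importantly it does not mention $s$ or $s'$: reflecting the \emph{line} $y=x+u$ gives $y=x+(2t-2-u)$ regardless of where $A$ touched it, so ``comparing the two reflection lines'' cannot produce the relation $s-s'=(v-u)/2$.

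The same issue undermines the argument that all of $\dot\cA\sqcup\ddot\cA$ sits in a single $\cF^u_s$. The ``matching lower bound $i\geq s$'' is meant to come from $\dual(A')\notin\cB$ together with $B\in\cB$, but this constrains $B$ relative to $\dual(A')$, not the hitting index $i$ of $A'$; and whatever it does yield is symmetric in $A$ and $A'$, so it cannot distinguish $s$ from $i$. A correct argument must use the specific hitting point $(s,u+s)$ of $A$ --- not just the line $y=x+u$ --- when analysing $\dual(A)$, and combine $B\not\to\dual(A)$ with $A\not\to\dual(B)$ to force $u+2s=v+2s'$ exactly; the line-reflection heuristic alone is too coarse to see the parameters $s,s'$.
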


Here, we record the main discussion of this and the previous section. 

\medskip\noindent
{\bf Setup.}
For a proof of Theorem~\ref{thm:stability} we may assume the following.
\begin{itemize}
\item $r\geq 0$ is a fixed integer.
\item $\epsilon,\epsilon_1\in(0,1/2)$, and $\delta_1\in(0,\frac1{r+2})$
are fixed real numbers.
\item $r\ll t\ll n$. 
($t$ depends on $r,\epsilon,\epsilon_1,\delta_1$, which will be described later.)
\item $\frac{r}{t+2r-1}\leq p\leq\frac {r+1}{t+2r+1}$
for $r\geq 1$, and $\frac{\epsilon}t\leq p\leq\frac1{t+1}$ for 
$r=0$.
\item $q = 1-p$ and $\alpha=p/q$.
\item $\cA,\cB\subset 2^{[n]}$ are $t$-nice, that is, shifted,
inclusion maximal, and cross $t$-intersecting.
\item $u=\lambda(\cA)$, $v=\lambda(\cB)$, $u+v=2t$, 
$1\leq u\leq t\leq v\leq 2t$.
\item $s\geq s' \geq 0$, $s-s'=(v-u)/2$, $u=t-(s-s')$, $v=t+s-s'$.
\item $\cA=\tilde\cA\sqcup\dot\cA\sqcup\ddot\cA\subset\cF^u$,
$\cB=\tilde\cB\sqcup\dot\cB\sqcup\ddot\cB\subset\cF^v$,
$\dot\cA\neq\emptyset$, $\dot\cB\neq\emptyset$, 
$\dot\cA\sqcup\ddot\cA\subset\cF_s^u$, 
$\dot\cB\sqcup\ddot\cB\subset\cF_{s'}^v$.
\end{itemize}

\medskip
Under this setup, the proof of Theorem~\ref{thm:stability} breaks down into three cases: Recalling $R_{\rm ex}$ defined in~\eqref{def:R_ex}, we consider
{\bf NE}: the non-extremal case $(s,s') \not\in R_{\rm ex}$, 
{\bf DE}: the diagonal extremal case $(s,s') \in R_{\rm ex}$ and $s = s'$, 
{\bf NDE}: the non-diagonal extremal case $(s,s') \in R_{\rm ex}$ and $s = s'+1$.

\section{Non-extremal cases}\label{sec:ABEC}

In this section, 
we deal with the case NE, that is, the case when $(s,s')\not\in R_{\rm ex}$. We prove the following lemma. 
By choosing $t_0$, depending on $\delta_1$, sufficiently large, 
the lemma shows that Theorem~\ref{thm:stability} holds vacuously for $t\geq t_0$,
since~\eqref{eq:AB_large} does not hold.

\begin{lemma}\label{lemma:AAC}
If $(s,s')\not\in R_{\rm ex}$ then 
$\mu_p (\cA)\mu_p (\cB)<\left(1-\frac1{r+2}\right)\mu_p (\cF^t_r)^2(1+o(1))$.
\end{lemma}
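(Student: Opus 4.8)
\textbf{Proof plan for Lemma~\ref{lemma:AAC}.}
The plan is to bound $\mu_p(\cA)$ and $\mu_p(\cB)$ separately in terms of $\alpha = p/q$ and the subscripts $s, s'$, using the random-walk estimates of Lemma~\ref{lemma:alpha^l bound} together with the structural information from the Setup, and then to check that in every case $(s,s')\notin R_{\rm ex}$ the resulting product is smaller than $(1-\tfrac1{r+2})\mu_p(\cF_r^t)^2$ by a factor bounded away from $1$. Recall from the Setup that $u = t-(s-s')$, $v = t+(s-s')$, that $\dot\cA\sqcup\ddot\cA\subset\cF_s^u$ and $\dot\cB\sqcup\ddot\cB\subset\cF_{s'}^v$, and that $\cA = \tilde\cA\sqcup\dot\cA\sqcup\ddot\cA$ with $\tilde\cA\subset\tilde\cF^u = \cF^{u+1}$, and similarly for $\cB$. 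By Lemma~\ref{lemma:alpha^l bound}(i), $\mu_p(\tilde\cA)\le\alpha^{u+1}$ and $\mu_p(\ddot\cA)\le\alpha^{u+1}$, while $\dot\cA\subset\dot\cF^u\cap\cF_s^u$, so $\mu_p(\dot\cA)$ is at most the measure of the walks hitting $(s,u+s)$ but not the line $y=x+u+1$, which by Fact~\ref{fact:reflection} is $\left(\binom{u+2s}{s}-\binom{u+2s}{s-1}\right)p^{u+s}q^s$. Hence $\mu_p(\cA) \le \left(\binom{u+2s}{s}-\binom{u+2s}{s-1}\right)p^{u+s}q^s + 2\alpha^{u+1}$, and symmetrically for $\mu_p(\cB)$ with $(v,s')$ in place of $(u,s)$.

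Next I would convert these bounds into the asymptotic form used throughout the paper. Since $p = \Theta(1/t)$ and $q^t = \Theta(1)$ by \eqref{eq:tp}, one has $\binom{u+2s}{s}p^{u+s}q^s = \frac{(tp)^s}{s!}p^{t-(s-s')}(1+o(1))$ for fixed $s$, and the $2\alpha^{u+1} = O(p^{u+1}) = O(p^{t-(s-s')+1})$ term is lower order exactly when $\dot\cA$ is nonempty and $s$ is a fixed constant; the reduction right before Lemma~\ref{lemma:A subset F^u_s} already guarantees $\dot\cA,\dot\cB\neq\emptyset$, and if $s$ (hence $v-u$) grows with $t$ the crude bound $\mu_p(\cA)\mu_p(\cB)\le\alpha^u\cdot\alpha^v\cdot(\text{poly}) \ll \alpha^{2t}$ already suffices, so we may assume $s,s'$ are bounded. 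Using the reflection term $\binom{u+2s}{s}-\binom{u+2s}{s-1} = \binom{u+2s}{s}\bigl(1-\tfrac{s}{u+s+1}\bigr) = \binom{u+2s}{s}(1-o(1))$ for bounded $s$, we get
\[
\mu_p(\cA) \le \frac{(tp)^s}{s!}\,p^{t-(s-s')}(1+o(1)), \qquad \mu_p(\cB)\le \frac{(tp)^{s'}}{s'!}\,p^{t+(s-s')}(1+o(1)),
\]
and multiplying, $\mu_p(\cA)\mu_p(\cB) \le \dfrac{(tp)^{s+s'}}{s!\,s'!}\,p^{2t}(1+o(1))$. Comparing with $\mu_p(\cF_r^t)^2 = \dfrac{(tp)^{2r}}{(r!)^2}p^{2t}(1+o(1))$ from \eqref{weight cFtr}, it remains to show
\[
\frac{(tp)^{s+s'}/(s!\,s'!)}{(tp)^{2r}/(r!)^2} < 1-\frac1{r+2}
\]
for all $(s,s')\notin R_{\rm ex}$ with $s\ge s'\ge 0$, using only $r-o(1)<tp<r+1$ from \eqref{eq:tp}.

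The main obstacle is this last elementary but somewhat delicate optimization: one must verify that the function $g(s,s') = \frac{(tp)^{s+s'}(r!)^2}{s!\,s'!\,(tp)^{2r}}$ stays below $1-\frac1{r+2}$ on the complement of $R_{\rm ex}$. The worst cases are the ones just outside $R_{\rm ex}$ — for $r\ge 1$ the pairs $(r+2,r)$, $(r+2,r+1)$, $(r+2,r+2)$, $(r+1,r-1)$, $(r,r-2)$ and $(r-1,r-2)$, and for $r=0$ the pairs $(2,0)$, $(1,1)$ wait, $(1,1)\in R_{\rm ex}$ when $r=0$? no — for $r=0$, $R_{\rm ex}=\{(0,0),(1,1),(1,0)\}$, so the nearby excluded pairs are $(2,0),(2,1),(2,2)$. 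For pairs with $s+s'$ large the factor $(tp)^{s+s'-2r}/(s!\,s'!)\cdot(r!)^2$ decays because $tp<r+1$ forces $(tp)^k/(r+k)!\cdot r! \ll 1$; for pairs with $s+s'$ small (e.g. $(r-1,r-2)$) it decays because $tp>r-o(1)$ makes the ratio less than roughly $\frac{(r!)^2}{(r-1)!(r-2)!(r-o(1))^3} = \frac{r-1}{(r-o(1))^2}<1-\frac1{r+2}$ for large $r$; the genuinely tight borderline computations — establishing the exact constant $1-\frac1{r+2}$, which the paper flags as sharp via $\{\cF^{t-1}_{r+2},\cF^{t+1}_{r+1}\}$ and $\{\cF^{t-2}_{r+2},\cF^{t+2}_{r}\}$ — need the precise bounds $tp<r+1$ (resp.\ $tp>r-o(1)$) rather than just $tp=\Theta(1)$, and should be done case by case, treating small $r$ (including $r=0,1$) by hand and general $r$ by monotonicity in $s+s'$ away from the diagonal band $\{r-1,r,r+1\}$. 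I also need to double-check the handling of $\ddot\cA$: when $s=s'$ the pair $(r+1,r+1)\in R_{\rm ex}$ is allowed, so a pair like $(r+1,r+1)$ is \emph{not} covered by this lemma and no contradiction is needed there — the case split must genuinely exclude all of $R_{\rm ex}$, and the $2\alpha^{u+1}$ slack terms must be shown negligible uniformly, which is where the bounded-$s$ reduction is essential.
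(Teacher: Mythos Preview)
Your approach is essentially the paper's: bound $\mu_p(\cA)$ by the measure of walks hitting $(s,u+s)$ plus $O(\alpha^{u+1})$, do the same for $\cB$, reduce to the ratio $g(s,s')=\dfrac{(r!)^2(tp)^{s+s'-2r}}{s!\,s'!}$, and check the boundary cases just outside $R_{\rm ex}$. The paper proceeds exactly this way (it even lists the same nine boundary pairs and verifies each against $\tfrac{r+1}{r+2}$).

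There is, however, a real gap in your reduction to bounded $s$. You write that ``if $s$ grows with $t$ the crude bound $\mu_p(\cA)\mu_p(\cB)\le\alpha^u\cdot\alpha^v\cdot(\text{poly})\ll\alpha^{2t}$ already suffices.'' But $u+v=2t$ always, so $\alpha^u\alpha^v=\alpha^{2t}$ on the nose, and by \eqref{weight cFtr2} this equals $\Theta(\mu_p(\cF_r^t)^2)$, not $o(\mu_p(\cF_r^t)^2)$; there is no helpful polynomial factor. (You also conflate $s$ with $s-s'$: $v-u=2(s-s')$, so large $s$ with $s'=s$ leaves $u=v=t$.) The crude line-hitting bound is genuinely not enough here. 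The fix is to stay with your own main-term estimate and show directly that $\binom{u+2s}{s}p^s$ is small when $s$ is large: for $s\ge 2e(r+1)$ one has $\binom{u+2s}{s}p^s<\bigl(\tfrac{e\,up}{s}+2ep\bigr)^s<(0.6)^s$, while $\binom{v+2s'}{s'}p^{s'}$ stays $O(e^{2(r+1)})$ uniformly in $s'$; the product of the two $h$-functions is then below any fixed constant, in particular below $\tfrac{r+1}{r+2}$. This is precisely the paper's ``$s$ large'' subsection, and without it (or an equivalent) your plan does not cover the full range. Once that is patched, the bounded-$s$ case-check you outline is correct in spirit; just be careful with the arithmetic (for instance $g(r-1,r-2)\to r^2(r-1)/(tp)^3\le (r-1)/r$, not $(r-1)/r^2$).
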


For a proof of Lemma~\ref{lemma:AAC}, recall that $\cA=\tilde\cA\sqcup\dot\cA\sqcup\ddot\cA\subset
\tilde\cF^u\sqcup\dot\cF^u_s\sqcup\ddot\cF^u_s$.
Recall also that no walk in $\dot\cF^u_s\sqcup\ddot\cF^u_s$ hits the line
$y=x+u+1$ while all walks in $\cF^u_s$ hit one of 
$(0,u+2s), (1,u+2s-1), \ldots, (s,u+s)$.
Since $\dot\cF^u_s\sqcup\ddot\cF^u_s\subset\cF^u_s$ all walks in
$\dot\cF^u_s\sqcup\ddot\cF^u_s$ must hit $(s,u+s)$. 
Thus, by Example~\ref{ex:walkshittingpoint},
we have $$\mu_p (\dot\cF^u_s\sqcup\ddot\cF^u_s)\leq 
\binom{u+2s}sp^{u+s}q^s<\binom{u+2s}sp^{u+s}.$$
This together with
$\mu_p (\tilde\cF^u)\leq \alpha^{u+1}=(p/q)^{u+1}$, which we get from 
Lemma~\ref{lemma:alpha^l bound}, yields
\[
\mu_p (\cA)\leq\mu_p (\tilde\cF^u\sqcup\dot\cF^u_s\sqcup\ddot\cF^u_s)< 
\left(\frac pq\right)^{u+1}+\binom{u+2s}sp^{u+s}
=h(u,s)\, p^u,
\]
where
\begin{equation}\label{def:h(u,s)}
 h(i,j):=\frac p{q^{i+1}}+\binom{i+2j}jp^j. 
\end{equation}
Similarly we have $\mu_p (\cB)\leq h(v,s')p^{v}$.
Thus we have $$\mu_p (\cA)\mu_p (\cB)<h(u,v)h(v,s')p^{u+v}=h(u,v)h(v,s')p^{2t}.$$ 
Hence, in order to show Lemma~\ref{lemma:AAC}, it suffices to show
\begin{equation}\label{eq:lemmaAAC}
 h(u,s)h(v,s')p^{2t}<\frac{r+1}{r+2}\mu_p (\cF^t_r)^2(1+o(1)).
\end{equation}

\subsection{The case when $s$ is large} 
In this subsection we show~\eqref{eq:lemmaAAC} for the case $s\geq 2e(r+1)$.
We start by bounding the terms in $h(u,s)$ and $h(v,s')$ using the following claim, which uses only elementary calculus.
\begin{claim}
If $s\geq 2e(r+1)$, then we have
\begin{enumerate}
\item $\max\{p/q^{u+1},p/q^{v+1}\}=O(p)$,
\item $\binom {u+2s}{s} p^s<e^{-e(r+1)}$,
\item $\binom {v+2s'}{s'} p^{s'}<1.001e^{2(r+1)}$.
\end{enumerate}
\end{claim}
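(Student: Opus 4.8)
The plan is to estimate each of the three quantities separately using the assumption $s \geq 2e(r+1)$ together with the basic asymptotics $p = \Theta(1/t)$, $tp < r+1$ from \eqref{eq:tp}, and the relations $u = t - (s-s')$, $v = t + (s-s')$, $s - s' = (v-u)/2 \geq 0$. Throughout, recall that $r \ll t$, so $s$ ranges over values that may be as large as $\Theta(t)$; we should keep track of where $s$ is moderate versus where $s$ is comparable to $t$.

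For part (i), the point is that $q^t = \Theta(1)$ by \eqref{eq:tp}, hence $q^{-t} = \Theta(1)$. Since $u \leq t$ we get $p/q^{u+1} \leq p \cdot q^{-(t+1)} = O(p)$ immediately. For $p/q^{v+1}$ we have $v = 2t - u$ which can be as large as $2t$, so $q^{-(v+1)}$ could be as large as $\Theta(q^{-2t}) = \Theta(1)$ — this is still $O(1)$, so $p/q^{v+1} = O(p)$ as well. (Here the hypothesis $s \geq 2e(r+1)$ is not really needed; it is listed for uniformity with (ii) and (iii).) For part (ii), I would write $\binom{u+2s}{s} p^s \leq \frac{(u+2s)^s}{s!} p^s \leq \left(\frac{e(u+2s)p}{s}\right)^s$ using $s! \geq (s/e)^s$. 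Now $up \leq tp < r+1$ and $2sp = 2s \cdot \Theta(1/t)$; since $u + 2s \leq u + v + s \le 2t + s$... actually more carefully $u + 2s = u + 2s$ and $u \le t$, $s \le t$ roughly, so $(u+2s)p \leq (t + 2s)p \le (r+1) + 2sp$. The cleanest route: $(u+2s)p < (r+1) + 2s/t \cdot \Theta(1)$. When $s \le \sqrt{t}$, say, $2sp = o(1)$ and the base is $< e(r+1+o(1))/s \le e(r+1)/s \cdot (1+o(1)) \le 1/(2)(1+o(1)) < 1$ by $s \ge 2e(r+1)$, giving a bound like $2^{-s} \le 2^{-2e(r+1)} < e^{-e(r+1)}$. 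When $s$ is larger one must check the base stays bounded below $1$; since $u + 2s \le 2t + s \le 3 \max(t,s)$ and $p = \Theta(1/t)$, the base is $O(\max(1, s/t))$, and one verifies it is $< 1$ using $s \ge 2e(r+1)$ and $tp < r+1$. I would organize this as: base $= \frac{e(u+2s)p}{s} \le \frac{e(u+2s)p}{2e(r+1)} = \frac{(u+2s)p}{2(r+1)}$, and then bound $(u+2s)p$; since $u \le t$ and, crucially, $2s \le v + s \le 2t$ (as $s \le v$ and $s - s' \le s$)... I need $(u+2s)p \le 2(r+1)(1-c)$ for some fixed $c>0$ to get exponential decay. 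This needs $u + 2s \le 2t$, i.e. $2s \le 2t - u = v$, i.e. $s \le v/2$; but $s = (v+u)/2 \cdot$... no, $s - s' = (v-u)/2$ so $s \ge (v-u)/2$, not obviously $\le v/2$. So I would instead just bound $(u+2s)p \le (t + 2s)p$ and split on whether $s \le t$: if $s \le t$ then $(t+2s)p \le 3tp < 3(r+1)$, giving base $< 3/2$ which is NOT $<1$ — so this crude bound fails and one genuinely needs the finer estimate, e.g. using that $s \le v$ and $v = 2t - u \le 2t$, combined with the actual constraint. \textbf{This is the step I expect to be the main obstacle}: pinning down a clean argument that the base $\frac{e(u+2s)p}{s}$ is bounded away from $1$ for all relevant $s \ge 2e(r+1)$, including the regime $s = \Theta(t)$, which may require using the constraint $u + 2s = u+2s$ with $u \ge 1$ and the precise form $p \le (r+1)/(t+2r+1)$ rather than just $p = \Theta(1/t)$, or splitting into subranges of $s$.

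For part (iii), I would argue similarly: $\binom{v+2s'}{s'}p^{s'} \le \left(\frac{e(v+2s')p}{s'}\right)^{s'}$, and here $s' = s - (s-s')$, with $v = t + (s-s')$, so $v + 2s' = v + 2s' \le$ something comparable; since $vp \le (t + (s-s'))p$ and we want to show this product is bounded by roughly $e^{2(r+1)}$. The function $x \mapsto (eM/x)^x$ for fixed $M$ is maximized around $x = M$ with value $e^M$, so with $M \approx vp$ or $M \approx (v+2s')p$ one gets a bound like $e^{(v+2s')p}$; then using $v \le 2t$ and $s' \le s \le \ldots$ one bounds the exponent by roughly $2(r+1) + (\text{something small})$, and the $1.001$ absorbs lower-order terms. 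The precise constant $1.001e^{2(r+1)}$ suggests taking $t_0$ large enough that all $o(1)$ error terms are below $0.001$. After establishing all three bounds I would note that they combine (in the main text following the claim) to give $h(u,s) = p/q^{u+1} + \binom{u+2s}{s}p^s \cdot q^s \le O(p) + e^{-e(r+1)}$ and similarly for $h(v,s')$, which will be small enough to push through \eqref{eq:lemmaAAC} in the $s \ge 2e(r+1)$ regime.
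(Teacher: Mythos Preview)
Your treatment of (i) is fine and matches the paper: $u \le v \le 2t$ and $q^t = \Theta(1)$ give $p/q^{v+1} \le p/q^{2t+1} = O(p)$.

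For (ii), the ``main obstacle'' you flag is not there. After reaching the base $\dfrac{e(u+2s)p}{s}$, simply split it additively as
\[
\frac{e(u+2s)p}{s} \;=\; \frac{eup}{s} + 2ep.
\]
Now $up \le tp < r+1$ and $s \ge 2e(r+1)$ give $\frac{eup}{s} < \frac{e(r+1)}{2e(r+1)} = \tfrac12$, while $2ep = O(1/t) < 0.1$ for $t$ large. Hence the base is below $0.6 < e^{-1/2}$ \emph{uniformly in $s$}, and
\[
\binom{u+2s}{s}p^s < (e^{-1/2})^s \le (e^{-1/2})^{2e(r+1)} = e^{-e(r+1)}.
\]
Your attempted route of bounding $(u+2s)p$ as a single quantity (or bounding the denominator $s$ from below by $2e(r+1)$ first) is what created the phantom difficulty in the regime $s = \Theta(t)$; the additive split avoids it entirely. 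No subrange analysis, and nothing beyond $tp < r+1$ and $p = O(1/t)$, is needed.

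For (iii) your optimization heuristic is the right idea for \emph{bounded} $s'$, but it does not cover all cases, and this is where the paper actually \emph{does} split into subranges. Writing
\[
\binom{v+2s'}{s'}p^{s'} < \left(\frac{2e(r+1)(s'+t)}{s't}\right)^{s'} =: \gamma
\]
(using $v \le 2t$ and $p < (r+1)/t$), the paper distinguishes three regimes: for $s' \ge \sqrt{t}$ one checks the base is $<1$ so $\gamma < 1$; for $2e(r+1) < s' < \sqrt{t}$ one has $\frac{2e(r+1)}{s'} < 1$, so $\gamma < \bigl(1 + \tfrac{s'}{t}\bigr)^{s'} < e$; and for $0 < s' < 2e(r+1)$ one factors $\gamma$ as $\bigl(\tfrac{s'+t}{t}\bigr)^{s'}\cdot\bigl(\tfrac{2e(r+1)}{s'}\bigr)^{s'}$, the first factor being $< 1.001$ and the second maximized at $s' = 2(r+1)$ with value $e^{2(r+1)}$ --- exactly your calculus observation. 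Your single ``$e^M$'' argument tacitly assumes $M = (v+2s')p$ is bounded, which fails when $s' = \Theta(t)$; that is why the case $s' \ge \sqrt t$ must be handled separately.
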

\begin{proof} (i) Using $u\leq v\leq 2t$ and $q^t=\Theta(1)$, we get
$\max\{p/q^{u+1},p/q^{v+1}\}\leq p/q^{2t+1}=O(p)$.

(ii) We have
$ \binom{u+2s}sp^s<\frac{(u+2s)^s}{s!}p^s
<\frac{(u+2s)^s}{(s/e)^s}p^s=\left(\frac {eup}s+2ep\right)^s$. 
Using $up\leq tp< r+1$ and the assumption $s\geq 2e(r+1)$,
we have $(eup)/s<  1/2$.
Also, we have $2ep<0.1$. 
Thus we get
$\binom{u+2s}sp^s<\left(\frac12+0.1\right)^s
<\left(\frac1{\sqrt{e}}\right)^s
\leq\left(\frac1{\sqrt{e}}\right)^{2e(r+1)}=e^{-e(r+1)}$.

(iii) If $s'=0$ then the inequality is true. Let $s'>0$.
First observe
\begin{align*}
 \binom{v+2s'}{s'}p^{s'} &< \binom{2t+2s'}{s'}\left(\frac{r+1}t\right)^{s'}
<\frac{(2t+2s')^{s'}}{(s'/e)^{s'}}\left(\frac{r+1}t\right)^{s'}\\
&=\left(\frac{2e(r+1)(s'+t)}{s't}\right)^{s'}=:\gamma.
\end{align*}
We consider now three cases, depending on $s'$. 
\begin{itemize} \item If $s'\geq\sqrt{t}$ then $\frac{s'+t}{s't}\leq\frac{\sqrt{t}+t}{\sqrt{t}t}=
\frac1t+\frac1{\sqrt{t}}<\frac1{2e(r+1)}$ 
for $t\geq t_0$. Thus $\gamma<1$. 
 \item If $2e(r+1)< s'<\sqrt{t}$ then $\frac{2e(r+1)}{s'}<1$, and hence,
$\gamma<\left(\frac{s'+t}t\right)^{s'}<
\left(\frac{t+\sqrt{t}}{t}\right)^{\sqrt{t}}<e$.
\item Finally let $0<s'<2e(r+1)$. We divide $\gamma$ into two parts
$(\frac{s'+t}t)^{s'}$ and $(\frac{2e(r+1)}{s'})^{s'}$.
For the first part we have
$\left(\frac{s'+t}{t}\right)^{s'}<\left(1+\frac{2e(r+1)}{t}\right)^{2e(r+1)}
<1.001$
for $t\geq t_0 $. For the second part, the derivative gives that $(\frac{2e(r+1)}{s'})^{s'}$ is maximized at $s'=2(r+1)$ 
as $e^{2(r+1)}$.
Thus $\gamma<1.001e^{2(r+1)}$.
\end{itemize}
This completes the proof of the claim.
\end{proof}

Now we prove \eqref{eq:lemmaAAC}. By the previous claim we have
\begin{align*}
 h(u,s)h(v,s')&<\left(O(p)+e^{-e(r+1)}\right)
\left(O(p)+1.001e^{2(r+1)}\right)\\
&=1.001e^{-(r+1)(e-2)}+O(p)<1.01e^{-(r+1)(e-2)}.
\end{align*}
Using $\mu_p(\cF_r^t)\geq\mu_p(\cF_0^t)=p^t$ for $r\geq 1$ we get
\[
 \frac{h(u,s)h(v,s')p^{2t}}{\mu_p(\cF^t_r)^2}\leq
h(u,s)h(v,s')<1.01 e^{-2(e-2)}<\frac14<\frac{r+1}{r+2}.
\]
This completes the proof of \eqref{eq:lemmaAAC}, and thus, of 
Lemma~\ref{lemma:AAC} for the case $s\geq 2e(r+1)$.

\subsection{The case when $s$ is small} 
In this subsection we show~\eqref{eq:lemmaAAC} for the case $s< 2e(r+1)$. In this case, we take  advantage of the fact that $s=O(1)$ is much smaller than $t$. Let us estimate $h(u,s)$ defined in~\eqref{def:h(u,s)}.
Using $p/q^{u+1}=o(1)$ and
$u+2s\leq t+2s=t+O(1)$ we have
\[
h(u,s)=\frac p{q^{u+1}}+ \binom{u+2s}sp^s\leq o(1)+ \frac{(t+O(1))^s}{s!}p^s=
\frac{(tp)^s}{s!}(1+o(1)).
\]
Similarly, $h(v,s')=\frac{(tp)^{s'}}{s'!}(1+o(1))$. 
Combining these with $\mu_p(\cF^t_r)=\frac{(tp)^r}{r!}p^t(1+o(1))$
(cf.~\eqref{weight cFtr}), we have 
\begin{equation}\label{eq:g(s,s')}
g(s,s'):=\frac{h(u,s)h(v,s')p^{2t}}{\mu_p(\cF_r^t)^2}=
\frac{(r!)^2(tp)^{s+s'-2r}}{s!s'!}(1+o(1)).
\end{equation}
In order to prove \eqref{eq:lemmaAAC}, it suffices to show that
\begin{equation}\label{eq:g<mu(F)^2}
 g(s,s')<\frac{r+1}{r+2}(1+o(1)).
\end{equation}
Our proof of \eqref{eq:g<mu(F)^2} is based on the following observation.

\begin{claim}\label{claim:(s,s')->(s+1,s'-1)} 
Let $S=\{s\in\N:0\leq s<2e(r+1)\}$.
There is $t_0$ depending on $r$ such that for $t\geq t_0$ the following holds. 
\begin{enumerate}
\item $g(s,s')>g(s+1,s'-1)$ for $s\geq s'>0$.
\item $g(s,s) \leq \max\{g(r-2,r-2), g(r+2,r+2)\}$
for $s\in S\setminus\{r-1,r,r+1\}$,
\item $g(s,s-1)\leq\max\{g(r-1,r-2),g(r+2,r+1)\}$
for $s\in S\setminus\{0,r,r+1\}$.
\end{enumerate}
\end{claim}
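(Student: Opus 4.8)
The plan is to obtain all three inequalities of Claim~\ref{claim:(s,s')->(s+1,s'-1)} by reading monotonicity off the closed form $g(s,s') = \frac{(r!)^2 (tp)^{s+s'-2r}}{s!\,s'!}(1+o(1))$ in \eqref{eq:g(s,s')}. Put $x := tp$; by \eqref{eq:tp} we have $r - o(1) < x < r+1$ when $r \ge 1$, and (harmlessly) $\epsilon \le x < 1$ when $r=0$. All indices that occur lie in the fixed finite set $S$, so the $o(1)$ error in \eqref{eq:g(s,s')} is uniform over them, and every comparison below reduces to a ratio of main terms which differs from $1$ by an amount depending only on $r$. Hence a single threshold $t_0 = t_0(r)$ validates all of them at once, and I will not track the error terms further.

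For part~(i), the anti-diagonal step $(s,s')\mapsto(s+1,s'-1)$ leaves $s+s'$, hence the power of $x$, fixed, so $\dfrac{g(s,s')}{g(s+1,s'-1)} = \dfrac{(s+1)!\,(s'-1)!}{s!\,s'!}(1+o(1)) = \dfrac{s+1}{s'}(1+o(1))$. For $0 < s' \le s < 2e(r+1)$ this is at least $\bigl(1+\tfrac1{s'}\bigr)(1+o(1)) \ge \bigl(1+\tfrac1{2e(r+1)}\bigr)(1+o(1)) > 1$ once $t$ is large, which is~(i).

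Parts~(ii) and~(iii) run the same argument along the diagonal and the first sub-diagonal. For~(ii) set $\phi(s) := \frac{r!}{s!}x^{s-r}$, so $g(s,s)=\phi(s)^2(1+o(1))$, and compute $\phi(s+1)/\phi(s) = \frac{x}{s+1}$: with $x<r+1$ this is $<\frac{r+1}{r+2}<1$ for $s\ge r+1$, while with $x>r-o(1)$ it exceeds $1$ for $0\le s\le r-2$ (indeed $\frac{x}{s+1}\ge\frac{x}{r-1}>\frac{r-o(1)}{r-1}$). Thus $\phi$ is strictly increasing on $\{0,\dots,r-1\}$ and strictly decreasing on $\{r+1,r+2,\dots\}$ with ratio gaps depending only on $r$; so $\phi(s)\le\phi(r-2)$ for $s\le r-2$ and $\phi(s)\le\phi(r+2)$ for $s\ge r+2$, and since $S\setminus\{r-1,r,r+1\}$ is the disjoint union of these two ranges, squaring gives~(ii). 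For~(iii), from \eqref{eq:g(s,s')}, $g(s+1,s)/g(s,s-1) = \frac{x^2}{s(s+1)}(1+o(1))$, which is $<\frac{r+1}{r+2}<1$ for $s\ge r+1$ (as $s(s+1)\ge(r+1)(r+2)>(r+1)^2>x^2$) and exceeds $1$ for $1\le s\le r-2$ (as $s(s+1)\le(r-1)(r-2)=r^2-3r+2<r^2-o(1)<x^2$). Hence $s\mapsto g(s,s-1)$ is increasing on $\{1,\dots,r-1\}$ and decreasing on $\{r+1,r+2,\dots\}$, giving $g(s,s-1)\le g(r-1,r-2)$ on the former and $g(s,s-1)\le g(r+2,r+1)$ for $s\ge r+2$; these ranges cover $S\setminus\{0,r,r+1\}$, which is~(iii). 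Because the boundary indices $r\pm2$, $r-1$ are themselves attained, the conclusions are the genuine $\le$ of the statement; the ratio gaps being $r$-only dominate the uniform $o(1)$ for $t$ large.

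I do not foresee a real obstacle; the only care needed concerns the degenerate small-$r$ cases. When $r\le1$ the index $r-2$ is negative, the increasing half of the range in~(ii) is empty, and the content is just $g(s,s)\le g(r+2,r+2)$; similarly when $r\le2$ the range $\{1,\dots,r-1\}$ in~(iii) is empty or the single trivial point $\{r-1\}$, and only the $g(r+2,r+1)$ term is involved. In each such case $S\setminus\{r-1,r,r+1\}$ (respectively $S\setminus\{0,r,r+1\}$) consists entirely of indices $\ge r+2$, so the monotonicity on $\{r+1,r+2,\dots\}$ alone delivers the reduced statement and the ill-defined $g(r-2,r-2)$, $g(r-1,r-2)$ are never invoked.
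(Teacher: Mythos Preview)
Your proof is correct and follows essentially the same approach as the paper: both arguments read off the closed form \eqref{eq:g(s,s')} and reduce each part to a comparison of consecutive ratios ($\frac{s+1}{s'}$ for~(i), $\frac{tp}{s+1}$ for~(ii), $\frac{(tp)^2}{s(s+1)}$ for~(iii)), using $r-o(1)<tp<r+1$ to determine the sign. Your treatment is slightly more explicit about the uniform $o(1)$ over the finite index set $S$ and about the degenerate small-$r$ cases, but the content is the same.
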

Of course, $s$ and $s'$ can never be negative, so when $r = 0$ or $1$ we replace
  $g(\cdot, r-2)$ in (ii) or (iii) with $0$.

\begin{proof}
(i) It follows from~\eqref{eq:g(s,s')} that 
if $t$ is large enough then
$g(s,s')>g(s+1,s'-1)$ is equivalent to $s+1>s'$.

(ii)
Recall that $r-o(1)<tp<r+1$. By comparing $g(x-1,x-1)$ and $g(x,x)$, 
we have that 
$g(1,1) < g(2,2) < \cdots < g(r-1,r-1)$
and $g(r,r) >  g(r+1,r+1) > \cdots$ if $t$ is large enough.

(iii)
Let $F(s):=g(s,s-1)$. Then, by~\eqref{eq:g(s,s')}, we have that 
if $t$ is large enough then $F(s)<F(s+1)$ is equivalent to 
$(s+1)s<(tp)^2$. Hence, 
$F(1)<F(2)<\cdots<F(r)$ and $F(r+1)>F(r+2)>\cdots$. 
Indeed, if $s\leq r-1$, then $(s+1)s\leq r(r-1)<(r-o(1))^2<(tp)^2$.
On the other hand, if $s\geq r+1$, then $(s+1)s\geq (r+2)(r+1)>(r+1)^2>(tp)^2$.
\end{proof}

The arrows in Figure~\ref{figR2} illustrates the relation between the values $g(s,s')$
for values of $s$ and $s'$ considered in Claim~\ref{claim:(s,s')->(s+1,s'-1)}.
We mention that \eqref{eq:g<mu(F)^2} does not hold for 
$(s,s')\in R_{\rm ex}$, which is the reason that
we do not draw arrows starting from the points in $R_{\rm ex}$.
The figure tells us that in order to show 
\eqref{eq:g<mu(F)^2} for $(s,s')\not\in R_{\rm ex}$ it suffices to check
the following starting points when $s$ and $s'$ are non-negative:
\begin{align*}
(s,s')\in \{ &(r-2,r-2), (r+2,r+2), (r-1,r-2), (r+2,r+1), \\
&(r+2,r), (r+2,r-1), (r+1,r-1), (r+1,r-2), (r,r-2)\}.
\end{align*}
\begin{figure}[h]
\begin{center}
\includegraphics[width=7cm]{./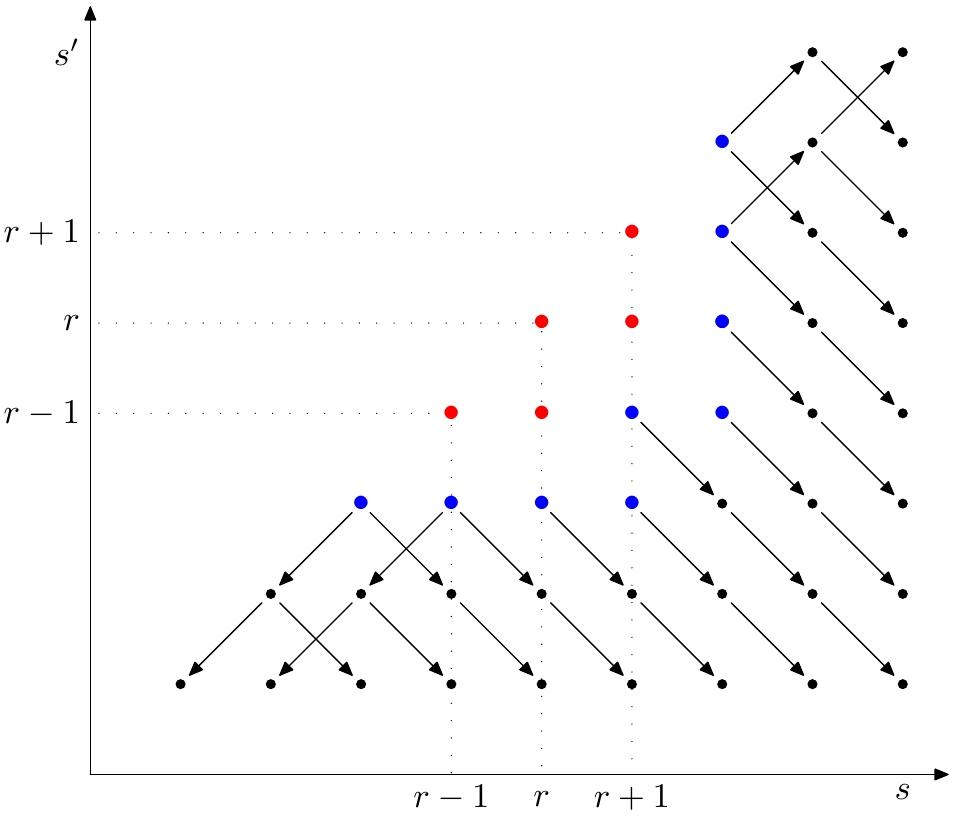}  
\end{center}
\caption{Relations on $g(s,s')$}
\label{figR2}
\end{figure} 

\noindent
The verification of \eqref{eq:g<mu(F)^2} for these cases follows
from easy computation. For example,
\[
g(r-2,r-2) \leq \frac{r^2(r-1)^2}{(tp)^4}(1+o(1))\leq \frac{(r-1)^2}{r^2}(1+o(1))\to \frac{(r-1)^2}{r^2}
\]
as $t\to\infty$, and to mean this situation we write 
$g(r-2,r-2)\nearrow\frac{(r-1)^2}{r^2}$. Similarly we have
\begin{align*}
&g(r+2,r+2)\nearrow\frac{(r+1)^2}{(r+2)^2},\,\,
&&g(r-1,r-2)\nearrow\frac{r-1}r,\,\,
&&g(r+2,r+1)\nearrow\frac{r+1}{r+2},\\  
&g(r+2,r)\nearrow\frac{r+1}{r+2},\,\,
&&g(r+2,r-1)\nearrow\frac{r}{r+2},\,\, 
&&g(r+1,r-1)\nearrow\frac{r}{r+1},\\ 
&g(r+1,r-2)\nearrow\frac{r-1}{r+1},\,\,  
&&g(r,r-2)\}\nearrow\frac{r-1}{r}.
\end{align*}
Therefore, we get \eqref{eq:g<mu(F)^2} for all $(s,s')\not\in R_{\rm ex}$,
which completes the proof of Lemma~\ref{lemma:AAC} for the case $s<2e(r+1)$.

So far, we have proved Theorem~\ref{thm:stability} in the case ND.

\section{Diagonal extremal cases}\label{sec:extremal cases} 
In this section we deal with the case DE, that is, we assume that
\[ (s,s')=(r-1,r-1), (r,r), \mbox{ or } (r+1,r+1),\]
 or the latter two only, if $r = 0$. 
Under the assumption $s=s'$ we have that $u=v=t$. 
Defining the notation
$[a,b]_2:=\{a+2i:i\in\Z\}\cap[a,b]$, 
we let
\begin{align*}
D^t_s(i):=[t-1]\cup [t+s, t+2s] \cup [t+2s+i+2,n]_2\in \dot\cF_s^{t}.
\end{align*}
The parameter $i$ ranges over $1\leq i\leq n-t-2s-1=:i_{\max}$, where $i_{\max}$ 
is defined so that $D^t_s(i_{\max})=[t-1]\cup[t+s,t+2s]$. Recalling the definition of the dual walk 
in~\eqref{eq:dual} on page \pageref{eq:dual}, we have (see Figure~\ref{fig:dual1}) 
\begin{equation*}\label{eq:dual(D)}
  \dual(D^t_s(i))=[t+s-1]\cup[t+2s+1,t+2s+i+1]\cup[t+2s+i+3,n]_2
\end{equation*}

\begin{figure}
\begin{minipage}{7cm}
\includegraphics[scale=.7]{./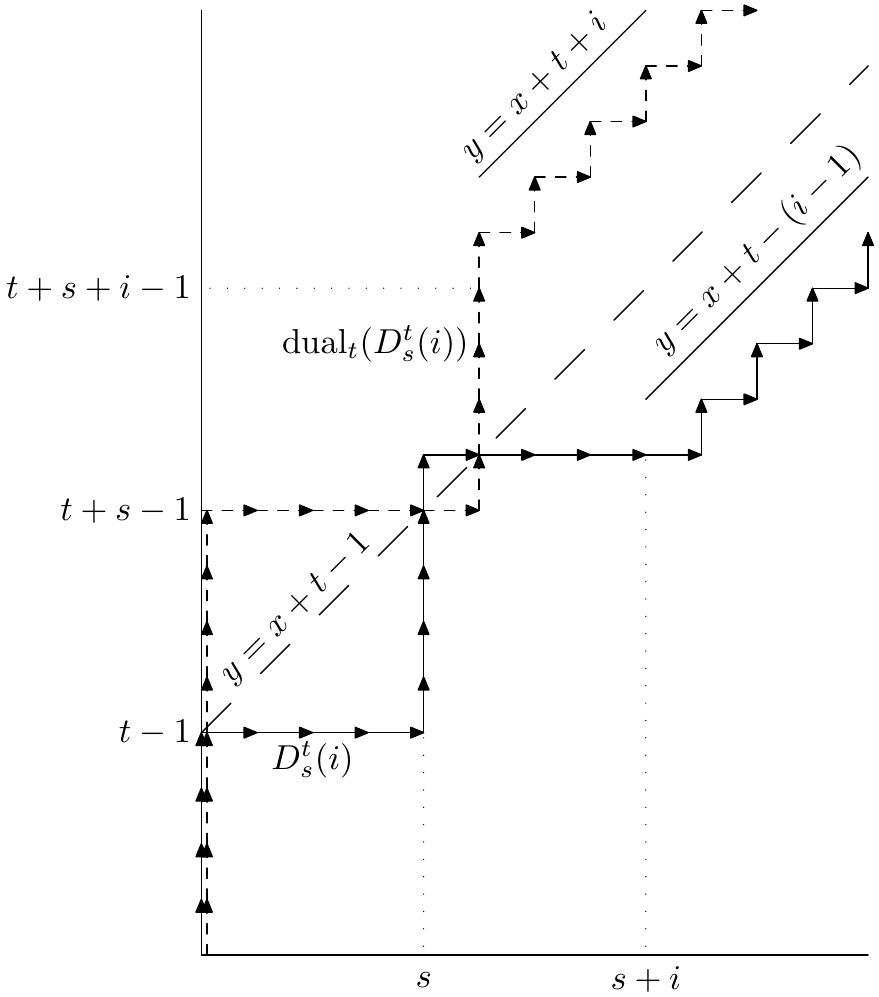} \qquad 
\caption{Walks $D^t_s(i)$ and $\dual(D^t_s(i))$}
\label{fig:dual1}
\end{minipage}
\begin{minipage}{7cm}
\includegraphics[scale=.7]{./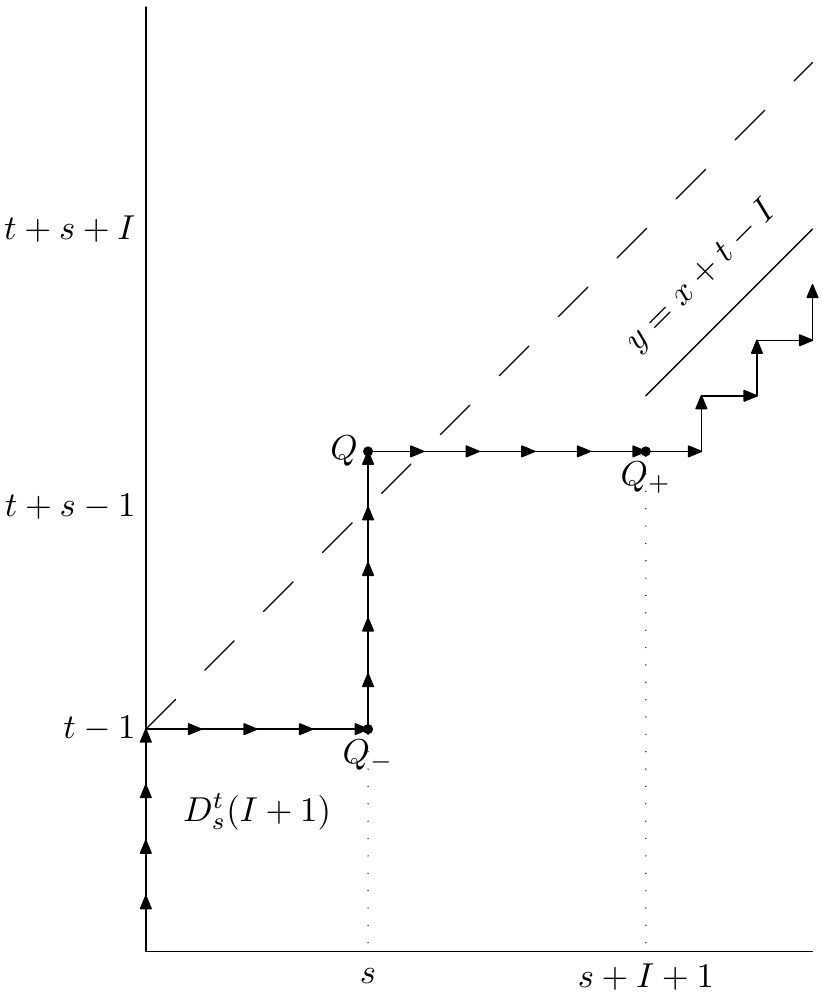}  
\caption{Walk $D^t_s(I+1)$}
\label{fig:QsWalk}
\end{minipage}
\end{figure} 

Consider the case where $D^t_s(1)\not\in \cA$ or $D^t_s(1)\not\in \cB$.
By symmetry we may assume that $D^t_s(1)\not\in \cA$.
In this case we show that Theorem~\ref{thm:stability} vacuously holds 
since~\eqref{eq:AB_large} does not hold.

\begin{lemma}\label{lem:D-tech-CaseIII} 
If $D^t_s(1)\not\in \cA$ then $\mu_p (\cA)\mu_p (\cB) \ll \mu_p (\cF^t_r)^2$, that is,
there exists $t_0$ depending on $r$ and $\delta_1$ such that
$\mu_p (\cA)\mu_p (\cB) < (1-\delta_1)\mu_p (\cF^t_r)^2$
 for all $t\geq t_0$.
\end{lemma}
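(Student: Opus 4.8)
The plan is to exploit the shifted, inclusion-maximal structure of $\cA$ together with the hypothesis $D^t_s(1)\notin\cA$ to conclude that a whole ``cone'' of walks above $D^t_s(1)$ is missing from $\cA$, and that this missing mass is a positive proportion of $\mu_p(\cF^t_r)$. First I would observe, using Fact~\ref{fact:dual}(i), that since $\cA$ is shifted and inclusion maximal, $D^t_s(1)\notin\cA$ implies $F\notin\cA$ for \emph{every} walk $F$ with $D^t_s(1)\shiftsto F$; that is, $\cA$ contains no walk lying weakly above $D^t_s(1)$. In particular, recalling that $\cA\subset\tilde\cF^u\sqcup\dot\cF^u_s\sqcup\ddot\cF^u_s$ with $u=t$ and $s=s'$, and that every walk in $\dot\cF^t_s\sqcup\ddot\cF^t_s$ passes through the point $(s,t+s)$, the walks in $\dot\cF^t_s\sqcup\ddot\cF^t_s$ that \emph{can} still belong to $\cA$ are constrained: after reaching $(s,t+s)$ they must not lie weakly above $D^t_s(1)$, which pins down their behaviour near coordinates $t+2s$. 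The upshot is a bound of the form $\mu_p(\cA)\leq \mu_p(\tilde\cF^t)+\mu_p(\{F\in\dot\cF^t_s\sqcup\ddot\cF^t_s: D^t_s(1)\not\shiftsto F\})$.

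Next I would estimate the second term. Since any such $F$ hits $(s,t+s)$ but is not weakly above $D^t_s(1)=[t-1]\cup[t+s,t+2s]\cup[t+2s+3,n]_2$, reading the definition of $\shiftsto$ componentwise forces $F$, among its first $t+2s$ coordinates, to take a ``right'' step somewhere in a prescribed early window; concretely one shows $|F\cap[t+2s]|$ is smaller than the extremal value by at least one, which costs a factor of order $p$. Combined with $\mu_p(\tilde\cF^t)\leq\alpha^{t+1}=O(p^{t+1})$ from Lemma~\ref{lemma:alpha^l bound}, and with the elementary count of walks through $(s,t+s)$ from Example~\ref{ex:walkshittingpoint}, this yields $\mu_p(\cA)\leq C\,p\,\mu_p(\cF^t_s)$ for a constant $C=C(r)$, or more precisely $\mu_p(\cA)=o(\mu_p(\cF^t_s))$ once we use $p=\Theta(1/t)$. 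Since $s\in\{r-1,r,r+1\}$, we have $\mu_p(\cF^t_s)=\Theta(p^t)=\Theta(\mu_p(\cF^t_r))$ by~\eqref{weight cFtr}. Meanwhile $\mu_p(\cB)\leq\mu_p(\cF^v)=\mu_p(\cF^t)\leq 1$ trivially (or $\leq C'\mu_p(\cF^t_r)$ if one prefers, but the crude bound suffices), so
\[
\mu_p(\cA)\mu_p(\cB)\leq \mu_p(\cA)\leq o\!\left(\mu_p(\cF^t_r)\right)\cdot\Theta(1)=o\!\left(\mu_p(\cF^t_r)^2\right),
\]
where the last step uses $\mu_p(\cF^t_r)=\Theta(p^t)=o(1)$. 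Choosing $t_0=t_0(r,\delta_1)$ large enough that this $o(1)$ factor drops below $1-\delta_1$ completes the proof.

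The main obstacle I expect is the bookkeeping in the middle step: carefully translating ``$\cA$ avoids everything weakly above $D^t_s(1)$'' into a clean quantitative bound on $\mu_p(\cA)$. One must be careful that a walk in $\dot\cF^t_s\sqcup\ddot\cF^t_s$ which is \emph{not} weakly above $D^t_s(1)$ can still be fairly heavy, so the saving of a factor $\Theta(p)$ has to come from a genuine structural feature — namely that such a walk must deviate \emph{below} $D^t_s(1)$ already within the first $t+2s$ steps, since beyond that point $D^t_s(1)$ only takes alternating up/right steps and is essentially shift-minimal among walks ending where it does. Making this precise may require splitting according to the first coordinate at which $F$ falls strictly below $D^t_s(1)$, and summing a geometric-type series in $p$; this is routine but is where the real content lies. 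An alternative, possibly cleaner, route is to use Fact~\ref{fact:dual}(ii): $D^t_s(1)\notin\cA$ does not directly give this, but if instead one knows $\dual(D^t_s(1))\in\cB$ one could bound $\cA$ via the $t$-intersecting condition; I would keep this in reserve in case the direct shifting argument gets unwieldy.
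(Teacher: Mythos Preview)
Your proposal has the right overall shape but contains two genuine errors.

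First, you have the direction of the shift relation backward. By definition $A\shiftsto B$ means $B$ lies weakly above $A$ as a walk, and Fact~\ref{fact:dual}(i) says: if $F\in\cA$ and $F\shiftsto F'$ then $F'\in\cA$. The contrapositive, applied with $F'=D^t_s(1)\notin\cA$, gives $F\notin\cA$ for every $F$ with $F\shiftsto D^t_s(1)$; that is, $\cA$ contains no walk lying weakly \emph{below} $D^t_s(1)$ --- not above, as you write. This is not a harmless slip: $D^t_s(1)$ is designed to be near the shift-\emph{maximum} of $\dot\cF^t_s$ (it begins with $t-1$ consecutive up-steps), so excluding walks below it removes almost all of $\dot\cF^t_s$, which is where the factor-of-$p$ saving comes from; excluding walks above it would remove almost nothing. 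Relatedly, your description of the saving mechanism (``$|F\cap[t+2s]|$ is smaller than the extremal value'') cannot be right: every walk in $\dot\cF^t_s$ has $|F\cap[t+2s]|=t+s$ identically. The actual count, as in the paper, is that among the $\binom{t+2s-1}{s}-\binom{t+2s-1}{s-1}=\Theta(t^s)$ walks from $(0,0)$ to $(s,t+s-1)$ avoiding $y=x+t$, all but $\Theta(t^{s-1})$ of them pass through $(s,t-1)$ and hence shift to $D^t_s(1)$; this yields $\mu_p(\dot\cA)=O(p^{t+1})$.

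Second --- and this is fatal as written --- the final arithmetic is wrong. From $\mu_p(\cA)=o(\mu_p(\cF^t_r))$ and the trivial bound $\mu_p(\cB)\leq 1$ you obtain only $\mu_p(\cA)\mu_p(\cB)=o(\mu_p(\cF^t_r))$, which is vastly \emph{larger} than $\mu_p(\cF^t_r)^2=\Theta(p^{2t})$; invoking $\mu_p(\cF^t_r)=o(1)$ in the displayed equality points the inequality the wrong way. You must use a nontrivial bound on $\mu_p(\cB)$: since $\cB\subset\cF^v=\cF^t$, Lemma~\ref{lemma:alpha^l bound} gives $\mu_p(\cB)\leq\alpha^t=O(p^t)=O(\mu_p(\cF^t_r))$ --- precisely the alternative you mention in parentheses and then discard. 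With that bound, $\mu_p(\cA)\mu_p(\cB)=O(p^{t+1})\cdot O(p^t)=O(p^{2t+1})=o(\mu_p(\cF^t_r)^2)$, which is exactly how the paper concludes.
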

\begin{proof} 
First we show that
\begin{equation}\label{eq:A(10)}\mu_p (\dot\cA) = O(p^{t+1}).\end{equation}
For the proof, let
$\cW:=\{W\in\dot\cF^t_s:W\to D^t_s(1)\}$. 
Since $D^t_s(1)\not\in \cA$, Fact~\ref{fact:dual} (i) gives that $W\to D^t_s(1)$ implies $W\not\in\cA$.
As $\dot\cA \subset\cF^t_s$ and so $\dot\cA\subset\dot\cF^t_s$ we have 
$\dot\cA\subset\dot\cF^t_s\setminus\cW$, and hence, $\mu_p (\dot\cA)\leq\mu_p (\dot\cF^t_s\setminus\cW)$. 
Now, all walks in $\dot\cF^t_s$ hit the line $y=x+t$ only at $(s,t+s)$.
Hence, they necessarily hit $(s,t+s-1)$ and the $(t+2s)$-th step is `up'.
Using Fact~\ref{fact:reflection} with $\ell = t-1$, the number of ways for a walk in
$\dot\cF^t_s$ to hit $(s,t+s-1)$, and so $(s,t+s)$, is $\binom{t+2s-1}s-\binom{t+2s-1}{s-1}$.
Further, of such walks, those that hit $(s,t-1)$ are in $\cW$, and this can happen in
$\binom{t+s-1}s$ ways.  Therefore, looking at only the first $t+2s$ steps, we 
see that 
\begin{align}
\mu_p (\dot\cA)&\leq\mu_p (\dot\cF^t_s\setminus\cW)\leq \left(\binom{t+2s-1}s-\binom{t+2s-1}{s-1}-\binom{t+s-1}s\right)p^{t+s}q^{s} \nonumber\\
&=\Theta(t^{s-1}p^{t+s}q^s)=\Theta((tp)^{s-1}p^{t+1})=O(p^{t+1}),
\label{eq:mu (A) = O(p^(t+1))}
\end{align}
which gives~\eqref{eq:A(10)}.

Then we use
$\mu_p (\ddot\cA)+\mu_p (\tilde\cA)\leq \alpha^{t+1} =O(p^{t+1})$, and hence, we infer
$\mu_p (\cA)=\mu_p (\dot\cA)+\mu_p (\ddot\cA)+\mu_p (\tilde\cA)=O(p^{t+1})$.  
Since $\mu_p (\cB)\leq \alpha^v=O(p^{t})$,
 we have that
 $\mu_p (\cA)\mu_p (\cB)= O(p^{2t+1})$. On the other hand, \eqref{weight cFtr2} gives 
 $\mu_p (\cF^t_r)^2=\Theta(p^{2t})$.  
 Hence,
$\mu_p (\cA)\mu_p (\cB) \ll \mu_p (\cF^t_r)^2$.
\end{proof}

Now we assume that both $\cA$ and $\cB$ contain $D^t_s(1)$.
Then we can define parameters $I,J$ as follows.
Since $\cA$ is shifted there exists $I$ with $1\leq I\leq i_{\max}$ such that 
$D^t_s(i)\in\cA$ for $i\leq I$ and $D^t_s(i)\not\in\cA$ for $i>I$.
Similarly there is $1\leq J\leq i_{\max}$ such that 
$D^t_s(j)\in\cB$ for $j\leq J$ and $D^t_s(j)\not\in\cB$ for $j>J$.
Based on their values, we consider the following two cases,
\textbf{Case I}: $I=J=i_{\max}$, and
\textbf{Case II}: Either $I\neq i_{\max}$ or $J\neq i_{\max}$.

\medskip
\noindent{\bf Case I.}
In this case we apply the following to get $\cA, \cB \subseteq \cF^t_s$.

\begin{claim}\label{claim:B subset F}
If $I=i_{\max}$, then $\cB \subset \cF^t_s$. If $J=i_{\max}$, then $\cA \subset \cF^t_s$.
\end{claim}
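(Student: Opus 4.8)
\textbf{Proof plan for Claim~\ref{claim:B subset F}.}
The two statements are symmetric, so the plan is to prove that $I=i_{\max}$ implies $\cB\subset\cF^t_s$; the other half follows by interchanging the roles of $\cA$ and $\cB$. First I would unwind what $I=i_{\max}$ means: it says $D^t_s(i_{\max})=[t-1]\cup[t+s,t+2s]\in\cA$. Next I would argue by contradiction. Suppose $\cB\not\subset\cF^t_s$; then, since $\cB\subset\cF^v=\cF^t$ and $\cB=\tilde\cB\sqcup\dot\cB\sqcup\ddot\cB$ with $\dot\cB\sqcup\ddot\cB\subset\cF^t_{s'}=\cF^t_s$ (using $s=s'$, $u=v=t$ here), the only way to leave $\cF^t_s$ is via $\tilde\cB=\cB\cap\tilde\cF^t\neq\emptyset$, i.e.\ some walk $B\in\cB$ hits the line $y=x+t+1$. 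By Fact~\ref{fact:dual}(i), since $\cB$ is shifted and inclusion maximal, $\cB$ then contains every walk above such a $B$, in particular the shift-minimal walk $B_0$ that hits $y=x+t+1$ at the point $(0,t+1)$, namely $B_0=[t+1]\cup[t+2,n]$, or more precisely (being inclusion maximal) $B_0 = [t+1]$ together with everything; the cleanest choice is the walk hitting $(s,t+1+s)$ minimally.

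The heart of the argument is then to show that $D^t_s(i_{\max})\in\cA$ and this $B_0\in\cB$ violate cross $t$-intersection, contradicting that $\cA,\cB$ are $t$-nice. For this I would compute $|D^t_s(i_{\max})\cap B_0|$ directly, or, more in the spirit of the paper, invoke Fact~\ref{fact:dual}(ii): it suffices to exhibit a walk $B_0\in\cB$ with $B_0\shiftsto\dual(D^t_s(i_{\max}))$, since then $B_0\notin\cB$ would be forced — wait, that is the wrong direction. Instead: Fact~\ref{fact:dual}(ii) says $\dual(D^t_s(i_{\max}))\notin\cB$ because $D^t_s(i_{\max})\in\cA$. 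So the plan is to show that \emph{every} walk in $\cB\setminus\cF^t_s$ shifts $\dual(D^t_s(i_{\max}))$ down to itself, i.e.\ $\dual(D^t_s(i_{\max}))\shiftsto W$ — equivalently $\dual(D^t_s(i_{\max}))$ lies on or above $W$ — for every $W\in\tilde\cF^t$; then by Fact~\ref{fact:dual}(i) applied to $\cB$, $W\in\cB$ would force $\dual(D^t_s(i_{\max}))\in\cB$, a contradiction. This reduces to the single geometric fact that the walk $\dual(D^t_s(i_{\max}))$ does \emph{not} hit the line $y=x+t+1$ while every walk in $\tilde\cF^t$ does, and that $\dual(D^t_s(i_{\max}))$ is the shift-minimal walk with $|D^t_s(i_{\max})\cap\,\cdot\,|=t-1$, so it sits below every walk that meets $D^t_s(i_{\max})$ in $\le t-1$ elements; any $W$ hitting $y=x+t+1$ that also $t$-intersects $D^t_s(i_{\max})$ would then have to lie above $\dual(D^t_s(i_{\max}))$ and hit $y=x+t+1$, which I would check is impossible by examining where $\dual(D^t_s(i_{\max}))$ crosses the relevant lines (it reaches $y=x+t-1$ but, being the dual of a walk that only touches $y=x+t$ at one point far to the right, its continuation forces any walk above it that reaches $y=x+t+1$ to intersect $D^t_s(i_{\max})$ in fewer than $t$ points).

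The main obstacle I anticipate is the bookkeeping in that last geometric step: carefully identifying $\dual(D^t_s(i_{\max}))$ as a walk (it is $[t+s-1]\cup[t+2s+1,n]$ by the formula for $\dual(D^t_s(i))$ with $i=i_{\max}$, reading off that the ``$[t+2s+i+1]$'' and ``$_2$'' pieces degenerate when $i=i_{\max}$), and then verifying cleanly that no walk in $\cB\setminus\cF^t_s$ can simultaneously avoid being above $\dual(D^t_s(i_{\max}))$ and hit $y=x+t+1$ while cross-$t$-intersecting $D^t_s(i_{\max})\in\cA$. Once the correct walks are pinned down, the contradiction should be a short consequence of Facts~\ref{fact:reflection}, \ref{fact:dual}; the risk is purely in getting the index arithmetic of $D^t_s(i_{\max})$ and its dual right. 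I would draw the two walks (as in Figure~\ref{fig:dual1} with $i=i_{\max}$) to make the incidence transparent before writing the formal contradiction.
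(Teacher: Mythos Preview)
Your core idea---use Fact~\ref{fact:dual}(ii) to get $\dual(D^t_s(i_{\max}))\notin\cB$, then Fact~\ref{fact:dual}(i) to constrain $\cB$---is exactly the paper's approach, and you correctly identify $\dual(D^t_s(i_{\max}))=[t+s-1]\cup[t+2s+1,n]$. However, the write-up has two real slips that would derail the proof as stated.

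First, you have the direction of $\shiftsto$ backward. In the paper's convention, $A\shiftsto B$ means the walk $B$ lies \emph{above} $A$. To deduce $\dual(D^t_s(i_{\max}))\in\cB$ from $W\in\cB$ via Fact~\ref{fact:dual}(i), you need $W\shiftsto\dual(D^t_s(i_{\max}))$, not the reverse. Your geometric description ``$\dual(D^t_s(i_{\max}))$ lies on or above $W$'' is the right picture, but that is $W\shiftsto\dual(D^t_s(i_{\max}))$.

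Second, even with the direction fixed, the claim ``$W\shiftsto\dual(D^t_s(i_{\max}))$ for every $W\in\tilde\cF^t$'' is false: for $s\geq 1$ the walk $W=[t+s]$ hits $y=x+t+1$ yet has $(W)_{t+s}=t+s<t+2s+1=(\dual(D^t_s(i_{\max})))_{t+s}$. You only need the claim for $W\notin\cF^t_s$, and there it is immediate: $|W\cap[t+2s]|\le t+s-1$ gives $(W)_{t+s}\ge t+2s+1$, and the rest follows. The detour through $\tilde\cF^t$ and the line $y=x+t+1$ is unnecessary and is what leads you astray in the final paragraph.

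The paper avoids all of this by arguing directly (the contrapositive of your contradiction): since $\dual(D^t_s(i_{\max}))\notin\cB$, every $B\in\cB$ satisfies $B\not\shiftsto\dual(D^t_s(i_{\max}))$; reading off the shape of $\dual(D^t_s(i_{\max}))$ as the path $(0,0)\to(0,t+s-1)\to(s+1,t+s-1)\to(s+1,n-s-1)$, this forces $B$ to hit one of $(0,t+s),\dots,(s,t+s)$, i.e.\ $|B\cap[t+2s]|\ge t+s$, so $B\in\cF^t_s$. No contradiction, no $\tilde\cF^t$, no appeal to cross-$t$-intersection beyond Fact~\ref{fact:dual}(ii).
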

\begin{proof} By symmetry, it is enough to prove only the first statement.
As $I=i_{\max}$, that is, $D^t_s(i_{\max})\in \cA$,  Fact~\ref{fact:dual} (ii) gives that
$\dual(D^t_s(i_{\max})) = [n]\setminus[t+s,t+2s]\not\in\cB$.
So   Fact~\ref{fact:dual} (i) implies that each walk $B\in\cB$ satisfies $B\not\shiftsto \dual(D^t_s(i_{\max})).$ Note that $\dual(D^t_s(i_{\max})) = [t+s-1]\cup[t+2s+1,n],$ which
consists of line segments connecting $(0,0)$, $(0,t+s-1)$, $(s+1,t+s-1)$,
and $(s+1,n-s-1)$.
Thus each walk $B\in\cB$ must hit one of $(0,t+s),(1,t+s),\ldots,(s,t+s)$,
which means $|B\cap[t+2s]|\geq t+s$.
Hence, $\cB\subset\cF_s^t$ holds.
\end{proof}  

One can easily check that Theorem~\ref{thm:stability} (a)--(c) follow from Claim~\ref{claim:B subset F}. Note that the equality in (a) and (c) holds if and only if $\cA=\cB=\cF^t_s$.

\medskip
\noindent{\bf Case II.} First we prove
\begin{equation}\label{X*<XD}
 X_*\ll X_{\Delta}-X_*,
\end{equation}
that is, (b) of Theorem~\ref{thm:stability} holds for $t\geq t_0$,
where $t_0$ depends on $r$ and $\epsilon_1$.
(Note that we do not assume \eqref{eq:AB_large} here.)
Since
$X_*=\mu_p(\cA\setminus\cF^t_s)+\mu_p(\cB\setminus\cF^t_s)$ and 
$X_{\Delta}-X_*=\mu_p(\cF^t_s\setminus\cA)+\mu_p(\cF^t_s\setminus\cB)$,
it suffices to show that
\begin{align}
 \mu_p(\cB\setminus\cF^t_s)&\ll\mu_p(\cF^t_s\setminus\cA),\label{FA>BF}\\
 \mu_p(\cA\setminus\cF^t_s)&\ll\mu_p(\cF^t_s\setminus\cB).\nonumber
\end{align}

By symmetry we only show \eqref{FA>BF}.
If $I=i_{\max}$ then $\cB\subset\cF^t_s$ by Claim~\ref{claim:B subset F}
and \eqref{FA>BF} holds. So we may assume that $I\neq i_{\max}$,
and there exists $D^t_s(I+1)\in\dot\cF^t_s\setminus\cA$.

\begin{claim}
We have
$\mu_p(\cF^t_s\setminus\cA)\geq\Theta(p^tq^I)$ and
$\mu_p(\cB\setminus\cF^t_s)\leq(q/p)^{t+I}$.
\end{claim}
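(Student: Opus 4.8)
The plan is to estimate the two measures separately by exploiting what the parameter $I$ tells us about walks near $D^t_s(I+1)$.

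For the lower bound $\mu_p(\cF^t_s\setminus\cA)\geq\Theta(p^tq^I)$, I would observe that since $D^t_s(I+1)\notin\cA$ and $\cA$ is shifted and inclusion maximal, Fact~\ref{fact:dual}(i) forces every walk $W$ with $W\shiftsto D^t_s(I+1)$ to lie outside $\cA$. Among such walks it suffices to exhibit a sub-family lying inside $\cF^t_s$ whose measure is $\Theta(p^tq^I)$. The walk $D^t_s(I+1)$ reaches $(s,t+s)$ (so it is in $\cF^t_s$) and then, after $t+2s$ steps, continues along the $x<t+2s+I+3$ stretch; the walks $W\shiftsto D^t_s(I+1)$ that also belong to $\cF^t_s$ are obtained by choosing how the first $t+2s$ steps realize the hitting of $(s,t+s)$ together with roughly $I$ further \emph{right} steps before turning up, which contributes a factor of order $q^I$ beyond the $\Theta(p^{t+s}q^s)=\Theta((tp)^sp^t)=\Theta(p^t)$ coming from reaching $(s,t+s)$. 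Counting these walks (using Fact~\ref{fact:reflection} to handle the constraint of hitting $y=x+t$ exactly at $(s,t+s)$, exactly as in the proof of Lemma~\ref{lem:D-tech-CaseIII}) yields the claimed $\Theta(p^tq^I)$.

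For the upper bound $\mu_p(\cB\setminus\cF^t_s)\leq(q/p)^{t+I}$, the key input is again the duality: $D^t_s(I+1)\notin\cA$ only constrains $\cA$, so instead I use that $D^t_s(i)\in\cA$ for $i\le I$, hence by Fact~\ref{fact:dual}(ii) $\dual(D^t_s(I))\notin\cB$, and then by Fact~\ref{fact:dual}(i) no walk $B\in\cB$ satisfies $B\shiftsto\dual(D^t_s(I))$. Reading off the shape of $\dual(D^t_s(I))=[t+s-1]\cup[t+2s+1,t+2s+I+1]\cup[t+2s+I+3,n]_2$ as a sequence of line segments, the condition $B\not\shiftsto\dual(D^t_s(I))$ means $B$ must stay below this path; in particular any $B\in\cB\setminus\cF^t_s$ (so $B$ does \emph{not} reach $(s,t+s)$, but $\cB\subset\cF^t_s{}'$ with $B$ reaching $y=x+t$ somewhere) must hit a rather high point forced by the long diagonal stretch of $\dual(D^t_s(I))$, and the probability of a random walk doing so is at most $\alpha^{t+I}=(p/q)^{t+I}$ by the line-hitting estimate behind Lemma~\ref{lemma:alpha^l bound}. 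Actually, rereading the claimed bound $(q/p)^{t+I}$: since $B\notin\cF^t_s$ means $B$ fails to reach $(s,t+s)$ yet lies in $\cB$, which is cross $t$-intersecting with $\cA\ni D^t_s(1)$, the walk $B$ is pushed \emph{far above} the line, so it is the number of ways to do this, not a probability, and one gets a count producing $(q/p)^{t+I}$; the precise combinatorial identity mirrors the $\dual$ computation. I would set this up carefully using the explicit coordinates of $\dual(D^t_s(I))$.

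Combining the two bounds gives $\mu_p(\cB\setminus\cF^t_s)\le(q/p)^{t+I}$ versus $\mu_p(\cF^t_s\setminus\cA)\ge\Theta(p^tq^I)$; since $(q/p)^{t+I}=\alpha^{-(t+I)}$ while $p^tq^I=\Theta(\alpha^t q^{t+I})$ and $\alpha=\Theta(1/t)$, $q=\Theta(1)$, the ratio $\mu_p(\cB\setminus\cF^t_s)/\mu_p(\cF^t_s\setminus\cA)$ is of order $\alpha^{-(2t+I)}q^{-(t+I)}\cdot$(something)$\to$... one checks it tends to $0$, establishing \eqref{FA>BF} and hence \eqref{X*<XD}. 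The main obstacle I anticipate is the second estimate: correctly translating the constraint $B\not\shiftsto\dual(D^t_s(I))$ into a clean count or probability bound of the right exponential order $t+I$, since the dual walk has three pieces and one must verify that the diagonal middle segment is what forces $B$ high enough; the first estimate is essentially a rerun of the counting already done in Lemma~\ref{lem:D-tech-CaseIII} with the extra $q^I$ factor, so it should be routine.
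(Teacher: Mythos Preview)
Your overall strategy matches the paper's, and your first inequality is essentially the paper's argument. One minor remark: the paper does not invoke Fact~\ref{fact:reflection} here; it simply counts walks that pass through $(s,t-1)$, then up to $(s,t+s)$, then right to $(s+I+1,t+s)$, and thereafter avoid $y=x+(t-I)$. That gives $\binom{t+s-1}{s}p^{t+s}q^{s+I+1}(1-\alpha)=\Theta(p^tq^I)$ directly. Your reflection-based count would also work, but is not needed.

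The real issue is in the second inequality, and it stems from a typo in the displayed claim: the bound should read $(p/q)^{t+I}=\alpha^{t+I}$, not $(q/p)^{t+I}$. (Indeed, the paper's proof and the subsequent use of the bound both write $(p/q)^{t+I}$; and $(q/p)^{t+I}>1$ would be vacuous as a measure bound.) Your first pass at the argument was therefore correct: from $\dual(D^t_s(I))\notin\cB$ and Fact~\ref{fact:dual}(i), every $B\in\cB$ must rise \emph{above} $\dual(D^t_s(I))$ somewhere. Note your sentence ``$B\not\shiftsto\dual(D^t_s(I))$ means $B$ must stay below this path'' has the direction reversed: $B\shiftsto X$ says $X$ lies above $B$, so $B\not\shiftsto X$ forces $B$ above $X$ at some step. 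Reading off the shape of $\dual(D^t_s(I))$ (segments through $(0,t+s-1)$, $(s+1,t+s-1)$, $(s+1,t+s+I)$, then staying strictly below $y=x+(t+I)$), going above it means either hitting one of $(0,t+s),\dots,(s,t+s)$ --- which puts $B\in\cF^t_s$ --- or hitting the line $y=x+(t+I)$. Hence every $B\in\cB\setminus\cF^t_s$ hits that line, and Lemma~\ref{lemma:alpha^l bound}(i) gives $\mu_p(\cB\setminus\cF^t_s)\le\alpha^{t+I}=(p/q)^{t+I}$. Your instinct was right; the detour into ``it is a count, not a probability'' and the garbled ratio computation at the end were both caused by trying to match the typo. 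With $(p/q)^{t+I}$ in hand, the ratio
\[
\frac{\mu_p(\cF^t_s\setminus\cA)}{\mu_p(\cB\setminus\cF^t_s)}\;\ge\;\Theta\!\left(\frac{p^tq^I}{(p/q)^{t+I}}\right)=\Theta\!\big((q^2/p)^{I}\,q^{2t}\big)=\Theta\!\big((q^2/p)^{I}\big)\gg 1
\]
is immediate, since $q^2/p=\Theta(t)$ and $q^{2t}=\Theta(1)$.
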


\begin{proof}
To prove the first inequality,
consider walks $W$ such that $W \shiftsto D^t_s(I+1)$ and $W$ hits $Q:=(s,t+s)$.
Since $D^t_s(I+1)\not\in \cA$, we have $W\in\cF^t_s \setminus \cA$ 
by Fact~\ref{fact:dual} (i).
Since $D^t_s(I+1)$ contains line segments connecting
$Q_-:=(s,t-1)$, $Q$, and $Q_{+}:=(s+I+1,t+s)$
it follows that $W$ must hit $Q_-$ and $Q_+$.
(See Figure~\ref{fig:QsWalk}.)
The number of walks from $(0,0)$ to $Q_-$ is $\binom{t+s-1}{s}$, then there is the unique walk passing $Q_-$, $Q$, and $Q_+$ which hits $(s,t+s)$.
So the measure of the family of all such walks $W$ is $\binom{t+s-1}{s}p^{t+s}q^{s+I+1}$.     
After hitting $Q_+$, to satisfy $W\shiftsto D^t_s(I+1)$, walks $W$
must not hit the line $y = x + (t - I)$.
This happens with probability at least $1 - \alpha$
because the measure of walks starting from $Q_+$ which hit this line is
at most $\alpha$ by (i) of Lemma~\ref{lemma:alpha^l bound}.
Thus we obtain 
\begin{align}\label{eq:F-A}
\mu_p (\cF^t_s\setminus \cA)\geq \binom{t+s-1}{s}p^{t+s}q^{s+I+1}(1-\alpha)
=\Theta(t^s p^{t+s} q^I) =\Theta(p^t q^I). 
\end{align}

Next, we prove the second inequality.
Since $D^t_s(I)\in\cA$, we have that $\dual(D^t_s(I))\not\in\cB$.
Referring to Figure \ref{fig:dual1}, with $i = I$, one sees that  
the walk $\dual(D^t_s(I))$ contains line segments connecting
$(0,0)$, $(0,t+s-1)$, $(s+1,t+s-1)$, and $(s+1,t+s+I)$; and then from 
$(s+1,s+t+I)$ the walk never hits the line $y=x+(t+I)$.
Hence, by  Fact~\ref{fact:dual}, each walk $B\in\cB$ must hit one of $(0,t+s), (1,t+s),\dots,(s,t+s)$,
or $y = x+(t+I)$. 
Note that all walks hitting one of these $s+1$ points are contained in 
$\cF^t_s$. Thus, each walk $B\in\cB\setminus \cF^t_s$ hits $y=x+(t+I)$. 
So by Lemma~\ref{lemma:alpha^l bound} (i) we have
\begin{align}\label{eq:B-F}
\mu_p (\cB\setminus \cF^t_s)\leq \alpha^{t+I}=(p/q)^{t+I},
\end{align}
which completes the proof of the claim.
\end{proof}

It follows from the claim that
$\frac{\mu_p(\cF^t_r\setminus\cA)}{\mu_p(\cB\setminus\cF^t_r)}
\geq\Theta\left(\frac{p^tq^I}{(p/q)^{t+I}}\right)=\Theta((q^2/p)^I)\gg 1$,
which yields \eqref{FA>BF}, and then \eqref{X*<XD}. Therefore we have
\begin{align*}
X&=\mu_p(\cA)+\mu_p(\cB)=X_{*} +\mu_p(\cA\cap\cF^t_s)+\mu_p(\cB\cap\cF^t_s)\\
&<X_{\Delta} -X_{*} +\mu_p(\cA\cap\cF^t_s)+\mu_p(\cB\cap\cF^t_s)\leq 
2\mu_p(\cF^t_s)=X_{\cF}, 
\end{align*}
and so we get (a) of Theorem~\ref{thm:stability} without equality. 
Then (c) follows from (a) with the AM-GM inequality, that is,
$\mu_p(\cA)\mu_p(\cB)\leq(X/2)^2<(X_{\cF}/2)^2=\mu_p(\cF^t_s)^2$.

\section{Non-diagonal extremal cases}\label{subsec:s=s'+1}
We now deal with the remaining non-diagonal extremal cases $(s,s')=(r+1,r)$ or $(r,r-1)$. In these cases, we have
$s-s'=1$, and hence, $u=t-1$, and $v=t+1$.
The argument here is similar to that of Section \ref{sec:extremal cases}, but it 
is slightly complicated because of the asymmetry
of $u$ and $v$.

As in Section~\ref{sec:extremal cases}, let
\[
 D^{u}_s(i) =[u-1]\cup[u+s,u+2s]\cup[u+2s+i+2,n]_2\in\dot\cF^{u}_s,
\]
for $1\leq i\leq n-u-2s-1=:i_{\max}$, where $i_{\max}$ is defined so that
$D^{u}_s(i_{\max})=[u-1]\cup[u+s,u+2s]$.
It is not hard to check 
\[  \dual(D^{u}_s(i))=[u+s]\cup[u+2s+1,u+2s+i+1]\cup[u+2s+i+3,n]_2 \]  
In particular, $\dual(D^{u}_s(i_{\max}))=[v+s'-1]\cup[v+2s'+1,n]
= [n] \setminus [v+s',v+2s']$.
On the other hand, $\dual(D^v_{s'}(i_{\max}))\neq[n]\setminus[u+s,u+2s]$
because of the asymmetry of $u$ and $v$. 
To fix this problem we vary from Section~\ref{sec:extremal cases} a bit, 
and let  
\[
 \widetilde{D}^{v}_{s'}(j)
 :=[v-2]\cup[v+s'-1,v+2s']\cup[v+2s'+j+2,n]_2\in\dot\cF^{v}_{s'},
\]
for $1\leq j\leq n-v-2s'-1=n-u-2s-1=i_{\max}.$ Then,
\begin{align*}
 \dual(\widetilde{D}^{v}_{s'}(j))&=[v+s'-2]\cup[v+2s'+1,v+2s'+j+1]\cup[v+2s'+j+3,n]_2 \\
 &=[u+s-1]\cup[u+2s+1,u+2s+j+1]\cup[u+2s+j+3,n]_2,
\end{align*}
and $\dual(\widetilde{D}^{v}_{s'}(i_{\max})) = [n] \setminus [u+s, u+2s]$, as desired.
We use $D^u_s(i)$ for $\cA$ and $\widetilde{D}^v_{s'}(j)$ for $\cB$.

First we show that if
$D^{u}_s(1)\not\in \cA$ or $\widetilde{D}^{v}_{s'}(1)\not\in \cB$, then
Theorem~\ref{thm:stability} vacuously holds since
\eqref{eq:AB_large} does not hold.

\begin{lemma}
If $D^u_s(1)\not\in\cA$ or $\widetilde{D}^{v}_{s'}(1)\not\in \cB$, then
$\mu_p (\cA)\mu_p (\cB) \ll \mu_p (\cF^t_r)^2$, that is,
there exists $t_0$ depending on $r$ and $\delta_1$ such that
$\mu_p (\cA)\mu_p (\cB)<(1-\delta_1)\mu_p (\cF^t_r)^2$
 for all $t\geq t_0$.
\end{lemma}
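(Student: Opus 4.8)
The plan is to follow the template established in Lemma~\ref{lem:D-tech-CaseIII}, adapting the reflection count to the asymmetric pair $u=t-1$, $v=t+1$. By symmetry (switching the roles of $\cA$ and $\cB$ and of $D^u_s$ and $\widetilde{D}^v_{s'}$), it suffices to treat the case $D^u_s(1)\not\in\cA$. First I would show that this forces $\mu_p(\dot\cA)=O(p^{u+1})$. The argument mirrors the one in Lemma~\ref{lem:D-tech-CaseIII}: every walk in $\dot\cF^u_s$ hits the line $y=x+u$ only at the point $(s,u+s)$, so it must pass through $(s,u+s-1)$ with the $(u+2s)$-th step going up; by Fact~\ref{fact:reflection} (with $\ell=u-1$) the number of such initial segments is $\binom{u+2s-1}{s}-\binom{u+2s-1}{s-1}$. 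Of these, the ones that also hit $(s,u-1)$ shift to $D^u_s(1)$, and by Fact~\ref{fact:dual}(i) these cannot lie in $\cA$ since $D^u_s(1)\not\in\cA$; there are $\binom{u+s-1}{s}$ of them. Hence, examining only the first $u+2s$ steps,
\[
\mu_p(\dot\cA)\leq\left(\binom{u+2s-1}{s}-\binom{u+2s-1}{s-1}-\binom{u+s-1}{s}\right)p^{u+s}q^s=\Theta((up)^{s-1}p^{u+1})=O(p^{u+1}),
\]
using $up=\Theta(1)$ and $s=O(1)$.

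Next I would combine this with the crude bounds on the other pieces of $\cA$. By Lemma~\ref{lemma:alpha^l bound}(i), $\mu_p(\ddot\cA)+\mu_p(\tilde\cA)\leq\alpha^{u+1}=O(p^{u+1})$, so $\mu_p(\cA)=\mu_p(\dot\cA)+\mu_p(\ddot\cA)+\mu_p(\tilde\cA)=O(p^{u+1})=O(p^t)$. For $\cB$, again by Lemma~\ref{lemma:alpha^l bound}(i) we have $\mu_p(\cB)\leq\alpha^v=O(p^v)=O(p^{t+1})$. Multiplying, $\mu_p(\cA)\mu_p(\cB)=O(p^{u+1+v})=O(p^{2t+1})$, whereas $\mu_p(\cF^t_r)^2=\Theta(p^{2t})$ by \eqref{weight cFtr2}. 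Since $p=o(1)$, this gives $\mu_p(\cA)\mu_p(\cB)\ll\mu_p(\cF^t_r)^2$, and in particular one can choose $t_0$ depending on $r$ and $\delta_1$ so that $\mu_p(\cA)\mu_p(\cB)<(1-\delta_1)\mu_p(\cF^t_r)^2$ for all $t\geq t_0$.

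The case $\widetilde{D}^v_{s'}(1)\not\in\cB$ is handled symmetrically: the walks in $\dot\cF^v_{s'}$ hitting $y=x+v$ only at $(s',v+s')$ pass through $(s',v+s'-1)$, and those also hitting $(s',v-2)$ shift to $\widetilde{D}^v_{s'}(1)$ (here the relevant reflection uses $\ell=v-2$, which is precisely why $\widetilde{D}$ was defined with $[v-2]$ rather than $[v-1]$); the same counting gives $\mu_p(\dot\cB)=O(p^{v-1})=O(p^t)$, and then $\mu_p(\cA)\mu_p(\cB)\leq\alpha^u\cdot O(p^t)=O(p^{2t-1+1})$... here one must be slightly careful, since $\mu_p(\cB)=O(p^t)$ and $\mu_p(\cA)\leq\alpha^u=O(p^{t-1})$ only yields $O(p^{2t-1})$, which is not enough. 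So in this sub-case I would instead note that $D^u_s(1)\in\cA$ may be assumed (otherwise we are in the first sub-case), and then argue as in Case~II of Section~\ref{sec:extremal cases} — or, more simply, observe that since $\widetilde{D}^v_{s'}(1)\not\in\cB$, by Fact~\ref{fact:dual}(i) and the shiftedness of $\cB$ the family $\cB$ avoids a positive-measure ($\Theta(p^v)$) family of walks through $(s',v+s')$, giving $\mu_p(\dot\cB\sqcup\ddot\cB)\leq\mu_p(\cF^v_{s'})-\Theta(p^v)$ together with $\mu_p(\tilde\cB)\leq\alpha^{v+1}=O(p^{v+1})$, and then combine with $\mu_p(\cA)\leq\mu_p(\cF^u_s)+O(p^{u+1})$; comparing against $\mu_p(\cF^t_r)^2$ via \eqref{weight cFtr} and the endpoint estimates of Claim~\ref{claim:2opt>pf+g/p} yields the strict inequality. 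The main obstacle is precisely this asymmetry: the coarse $\alpha^u\alpha^v=\alpha^{2t}=\Theta(p^{2t})$ bound is not itself $\ll\mu_p(\cF^t_r)^2$, so I must extract a genuine $\Theta(p^v)$ (or $\Theta(p^u)$) saving from the excluded dual walk on the side where the $D$-walk is missing, rather than relying on the line-$y=x+(u{+}1)$ escape bound alone.
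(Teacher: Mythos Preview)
Your treatment of the case $D^u_s(1)\not\in\cA$ is correct and matches the paper. The problem is in the second case $\widetilde{D}^v_{s'}(1)\not\in\cB$: your bound $\mu_p(\dot\cB)=O(p^{v-1})$ is simply a miscalculation. Running the identical counting argument (walks in $\dot\cF^v_{s'}$ pass through $(s',v+s'-1)$ and then step up, and among these the ones hitting $(s',v-2)$ shift to $\widetilde{D}^v_{s'}(1)$) gives
\[
\mu_p(\dot\cB)\leq\left(\binom{v+2s'-1}{s'}-\binom{v+2s'-1}{s'-1}-\binom{v+s'-2}{s'}\right)p^{v+s'}q^{s'}=O\!\left(v^{s'-1}p^{v+s'}\right)=O(p^{v+1})=O(p^{t+2}).
\]
The saving of one factor of $p$ comes from the cancellation of the $\Theta(v^{s'})$ leading terms of $\binom{v+2s'-1}{s'}$ and $\binom{v+s'-2}{s'}$, not from the reflection principle; the reflection still uses $\ell=v-1$ (walks to $(s',v+s'-1)$ avoiding $y=x+v$). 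The fact that $\widetilde{D}^v_{s'}$ is defined with $[v-2]$ rather than $[v-1]$ only shifts the pinch point from $(s',v-1)$ to $(s',v-2)$, which changes $\binom{v+s'-1}{s'}$ to $\binom{v+s'-2}{s'}$ and is invisible at leading order. (The paper chose $[v-2]$ so that $\dual(\widetilde{D}^v_{s'}(i_{\max}))=[n]\setminus[u+s,u+2s]$, not for any reflection reason.)

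With the correct bound $\mu_p(\cB)=O(p^{t+2})$, the crude estimate $\mu_p(\cA)\leq\alpha^u=O(p^{t-1})$ already gives $\mu_p(\cA)\mu_p(\cB)=O(p^{2t+1})\ll\mu_p(\cF^t_r)^2$, exactly as in the first case. None of your proposed workarounds (invoking Case~II, extracting a $\Theta(p^v)$ saving from $\cF^v_{s'}$, or appealing to Claim~\ref{claim:2opt>pf+g/p}) are needed, and your opening ``by symmetry it suffices\ldots'' should be dropped since the two cases are not symmetric and you end up treating them separately anyway.
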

\begin{proof}
If $D^u_s(1)\not\in\cA$ then
replacing $t$ with $u$ in \eqref{eq:mu (A) = O(p^(t+1))} gives
  \begin{align*}
 \mu_p (\dot\cA)&\leq
  \left(\binom{u+2s-1}s-\binom{u+2s-1}{s-1}
     -\binom{u+s-1}s\right)p^{u+s}q^{s}
=O(p^{u+1}) = O(p^t).
  \end{align*}
This together with $\mu_p(\ddot\cA)+\mu_p(\tilde\cA)\leq\alpha^{u+1}=O(p^t)$
yields $\mu_p(\cA)=O(p^t)$.
For $\cB$ we use the obvious estimation $\mu_p(\cB)\leq\alpha^v=O(p^{t+1})$.
Thus $\mu_p(\cA) \mu_p(\cB) =O(p^{2t+1})$.

If $\widetilde{D}^{v}_{s'}(1)\not\in \cB$ then we have $\mu_p(\dot\cB)=O(p^{t+2})$. Indeed, 
with only obvious changes to the proof of
  Lemma \ref{lem:D-tech-CaseIII}, we get that 
  $\dot\cB \subset \dot\cF^v_{s'} \setminus \cW'$ where 
$\cW':=\{W\in\dot\cF^v_{s'}:W\to \widetilde{D}^v_{s'}(1),\,W\text{ hits }(s',v+s')\}$, 
and, using $\mu_p (\dot\cB)\leq\mu_p(\dot\cF^v_{s'}\setminus\cW')$, we have that
  \begin{align*}
 \mu_p (\dot\cB) &\leq
  \left(\binom{v+2s'-1}{s'}-\binom{v+2s'-1}{s'-1}
     -\binom{v+s'-1}{s'}\right)p^{v+s'}q^{s'} \\
                 &=O(p^{v+1}) = O(p^{t+2}).
  \end{align*}                 
  (If $s' = 0$ then $\dot\cF^v_{s'}=\cW'$ and $\mu_p(\dot\cB)=0$.)
Thus $\mu_p(\cB)=O(p^{t+2})$ follows from 
$\mu_p(\ddot\cB)+\mu_p(\tilde\cB)\leq\alpha^{v+1}=O(p^{t+2})$.
Since $\mu_p(\cA)\leq\alpha^u=O(p^{t-1})$ we get
$\mu_p(\cA) \mu_p(\cB) =O(p^{2t+1})$.

Consequently if $D^u_s(1)\not\in\cA$ or $\tilde D^v_{s'}\not\in\cB$ then
$\mu_p(\cA) \mu_p(\cB) =O(p^{2t+1})\ll\mu_p(\cF^t_r)^2$.
\end{proof}  

Now we assume that both $\cA$ and $\cB$ contain $D^t_s(1)$.
We define the parameters 
$I:=\max\{i\geq 1 : D^{u}_s(i)\in \cA \}$ and
$J:=\max\{j\geq 1 : \widetilde{D}^{v}_{s'}(j)\in \cB \}$.
We again have the following two cases, that is,
\textbf{Case I}: $I=J=i_{\max}$, and
\textbf{Case II}: Either $I\neq i_{\max}$ or $J\neq i_{\max}$.

\medskip
\noindent{\bf Case I.} 
In this case we have
$D^{u}_s(i_{\max})\in \cA$ and  $\widetilde{D}^{v}_{s'}(i_{\max})\in \cB$.
We can argue as in the previous section.
The assumption $D^{u}_s(i_{\max})\in \cA$ implies that 
$\dual(D^u_s(i_{\max}))=[n]\setminus[v+s',v+2s']\not\in\cB$, and hence, $\cB \subset \cF^v_{s'}.$
Similarly $\widetilde{D}^{v}_{s'}(i_{\max})\in \cB$ implies 
$\cA \subset \cF^u_{s}.$
One can easily check that Theorem~\ref{thm:stability} (a)--(c) follow from $\cB \subset \cF^v_{s'}$ and $\cA \subset \cF^u_{s}$.

\medskip
\noindent{\bf Case II.} 
First we prove
\begin{equation}\label{X*<XD2}
 X_*\ll X_{\Delta}-X_*,
\end{equation}
that is, (b) of Theorem~\ref{thm:stability} holds for $t\geq t_0$,
where $t_0$ depends on $r$ and $\epsilon_1$.
To this end it suffices to show the following.
\begin{claim}\label{lem:ND-tech-CaseII}
We have $\frac1p\mu_p (\cB\setminus\cF^v_{s'})\ll p\mu_p (\cF^u_s\setminus\cA)$ and
$p\mu_p (\cA\setminus\cF^u_{s})\ll \frac1p\mu_p (\cF^v_{s'}\setminus\cB)$.      
\end{claim}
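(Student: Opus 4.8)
The plan is to follow Case~II of Section~\ref{sec:extremal cases} almost verbatim, making the bookkeeping changes forced by $u=t-1$ and $v=t+1$. As there, for each of the two inequalities it suffices to prove a lower bound on the relevant ``$\cF$ minus family'' measure and a matching upper bound on the ``family minus $\cF$'' measure; combining these with $\alpha=\Theta(p)$ and $I,J\geq 1$ will yield the claimed $\ll$. I will use throughout the identities $v+2s'=u+2s$ and $v+s'=u+s+1$ (immediate from $s'=s-1$), which say that $\cF^v_{s'}=\{F:|F\cap[u+2s]|\geq u+s+1\}$. Also, if $I=i_{\max}$, then, as in Case~I, $\cB\subset\cF^v_{s'}$, so $\mu_p(\cB\setminus\cF^v_{s'})=0$; and if $J=i_{\max}$, then $\cA\subset\cF^u_s$, so $\mu_p(\cA\setminus\cF^u_s)=0$. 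Hence it is enough to treat the case $I<i_{\max}$ and $J<i_{\max}$, so that $D^u_s(I+1)\in\dot\cF^u_s\setminus\cA$ and $\widetilde{D}^v_{s'}(J+1)\in\dot\cF^v_{s'}\setminus\cB$.

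For the first inequality I would mimic the proof of \eqref{eq:F-A}: consider the walks $W$ passing through $Q_-:=(s,u-1)$, $Q:=(s,u+s)$ and $Q_+:=(s+I+1,u+s)$ and thereafter avoiding the line $y=x+(u-I)$. Since $D^u_s(I+1)\notin\cA$, Fact~\ref{fact:dual}(i) places all such $W$ in $\cF^u_s\setminus\cA$, and counting the walks reaching $Q_-$, together with Lemma~\ref{lemma:alpha^l bound}(i) for the avoidance probability, gives
\[
\mu_p(\cF^u_s\setminus\cA)\ \geq\ \binom{u+s-1}{s}p^{u+s}q^{s+I+1}(1-\alpha)\ =\ \Theta(p^{u}q^{I}).
\]
For the matching upper bound, $D^u_s(I)\in\cA$ gives $\dual(D^u_s(I))\notin\cB$ by Fact~\ref{fact:dual}(ii); from the explicit description of $\dual(D^u_s(i))$ one sees that this walk has a horizontal segment at height $u+s$ ending at step $u+2s$, which is exactly the boundary of $\cF^v_{s'}$ by the identities above, and a tail oscillating between the lines $y=x+(u+I)$ and $y=x+(u+I+1)$. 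The argument of Section~\ref{sec:extremal cases} then forces every $B\in\cB\setminus\cF^v_{s'}$ to meet the line $y=x+(u+I+2)$, so $\mu_p(\cB\setminus\cF^v_{s'})\leq\alpha^{u+I+2}$ by Lemma~\ref{lemma:alpha^l bound}(i). Dividing, the ratio of $(1/p)\,\mu_p(\cB\setminus\cF^v_{s'})$ to $p\,\mu_p(\cF^u_s\setminus\cA)$ is $\Theta(\alpha^{I})=o(1)$ since $I\geq 1$.

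The second inequality goes the same way with $\widetilde{D}^v_{s'}$ in place of $D^u_s$. The walks through $(s',v-2)$, $(s',v+s')$ and $(s'+J+1,v+s')$ which thereafter avoid $y=x+(v-J)$ lie in $\cF^v_{s'}\setminus\cB$ by Fact~\ref{fact:dual}(i), and give
\[
\mu_p(\cF^v_{s'}\setminus\cB)\ \geq\ \binom{v+s'-2}{s'}p^{v+s'}q^{s'+J+1}(1-\alpha)\ =\ \Theta(p^{v}q^{J}).
\]
From $\widetilde{D}^v_{s'}(J)\in\cB$ we get $\dual(\widetilde{D}^v_{s'}(J))\notin\cA$; here the horizontal segment sits at height $u+s-1$, which is the boundary of $\cF^u_s$, and the tail oscillates around $y-x\in\{u+J-2,u+J-1\}$, just below $y=x+(u+J)$, forcing every $A\in\cA\setminus\cF^u_s$ onto the line $y=x+(u+J)$, so $\mu_p(\cA\setminus\cF^u_s)\leq\alpha^{u+J}$. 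Since $u=t-1$ and $v=t+1$, the ratio of $p\,\mu_p(\cA\setminus\cF^u_s)$ to $(1/p)\,\mu_p(\cF^v_{s'}\setminus\cB)$ is $\Theta(\alpha^{J})=o(1)$.

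The only genuinely new work, relative to Section~\ref{sec:extremal cases}, is checking that these two dual walks still have the ``flat part equal to the boundary of the target family, tail lying one line below the critical line'' shape despite $u\neq v$; this is exactly where the asymmetric definition of $\widetilde{D}^v_{s'}(j)$ earns its keep (it was chosen so that $\dual(\widetilde{D}^v_{s'}(i_{\max}))=[n]\setminus[u+s,u+2s]$), and it is the step I expect to require the most care. Once the shapes of $\dual(D^u_s(I))$ and $\dual(\widetilde{D}^v_{s'}(J))$ are pinned down, the corridor counts for the lower bounds and the Lemma~\ref{lemma:alpha^l bound} estimates for the upper bounds go through exactly as in the diagonal case.
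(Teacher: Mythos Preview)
Your approach is essentially identical to the paper's: you reduce to $I<i_{\max}$ (resp.\ $J<i_{\max}$) for each inequality separately, obtain the lower bounds on $\mu_p(\cF^u_s\setminus\cA)$ and $\mu_p(\cF^v_{s'}\setminus\cB)$ by counting walks through $Q_-,Q,Q_+$ that avoid the appropriate line, and obtain the upper bounds on $\mu_p(\cB\setminus\cF^v_{s'})$ and $\mu_p(\cA\setminus\cF^u_s)$ via the shape of $\dual(D^u_s(I))$ and $\dual(\widetilde D^v_{s'}(J))$; your bounds $\alpha^{u+I+2}=\alpha^{v+I}$ and $\alpha^{u+J}$ match the paper's exactly.

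One small point: writing the final ratio as ``$\Theta(\alpha^I)$'' (resp.\ ``$\Theta(\alpha^J)$'') is not quite accurate in general, since $q^I$ need not be $\Theta(1)$ when $I$ is much larger than $t$. The honest bound on the ratio is $O\big((p/q^2)^I\big)$, which is still $o(1)$ because $p/q^2<1$ and $I\ge 1$; the paper phrases this as the reciprocal being $\Theta\big((q^2/p)^I p^{-2}\big)\gg 1$. This is a cosmetic issue and does not affect the validity of your argument.
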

\begin{proof}
First, we show the first inequality. If $I=i_{\max}$ then this is true
because $\cB\subset\cF^v_{s'}$ and $\mu_p(\cB\setminus\cF^v_{s'})=0$
follow from the argument in Case I. So assume that $I \neq i_{\max}$. 
Changing $t$ to $u$ in the argument used to give \eqref{eq:F-A} 
yields
\[
\mu_p (\cF^u_s\setminus\cA)\geq\binom{u+s-1}sp^{u+s}q^{s+I+1}(1-\alpha) 
\geq\Theta(u^sp^{u+s}q^I)=\Theta(p^{t-1}q^I),
\]
and changing $t$ to $v$ in the argument for \eqref{eq:B-F} gives
$\mu_p (\cB\setminus\cF^v_{s'})\leq\alpha^{v+I}=(p/q)^{t+I+1}$.
Thus 
\[
\frac{p\mu_p(\cF^u_s\setminus \cA)}{\frac1p\mu_p (\cB\setminus\cF^v_{s'})}
\geq\Theta\left(\frac{p^{t-1}q^I}{(p/q)^{t+I+1}}\right)
=\Theta((q^2/p)^I p^{-2})\gg 1,
\]
showing the first inequality.

Next, we prove the second inequality. This is clearly true if $J=i_{\max}$.
So assume that $J \neq i_{\max}$. 
We have 
\[ \widetilde{D}^v_{s'}(J+1)=[v-2] \cup [v+s'-1,v+2s'] \cup [v + 2s' + J + 2,n]_2 \not\in\cB, \]
and walks which hit $(s',v+s')$ 
and shift to
$\widetilde{D}^v_{s'}(J+1)$ belong to $\cF^v_{s'}\setminus\cB$. 
These walks must hit $(s',v-2)$, then $(s',v+s')$ and then $Q_+=(s'+J+1,v+s')$, and
after hitting $Q_+$ they never hit the line $y=x+(v-J)$. Thus
   \[ \mu_p (\cF^v_{s'}\setminus\cB)\geq\binom{v+s'-2}{s'}p^{v+s'}q^{s'+J+1}(1-\alpha) 
\geq\Theta(v^{s'}p^{v+s'}q^J)=\Theta(p^{t+1}q^J),
\]
   while as
   $\dual(\widetilde{D}_{s'}^v(J))= [u+s-1] \cup [u + 2s + 1,, u + 2s + J + 1] \cup [u + 2s + J + 3,n]_2 
   \not\in \cA$
 implies, following the
   argument for \eqref{eq:B-F}, that each walk in
   $\cA \setminus \cF^u_s$ hits $y = x + (u + J)$. Together this gives  
$\mu_p (\cA\setminus\cF^u_{s})\leq\alpha^{u+J}=(p/q)^{t-1+J}$. So
\[
 \frac{\frac1p\mu_p (\cF^v_{s'}\setminus\cB)}{  p\mu_p (\cA\setminus\cF^u_{s}) }
\geq\Theta\left(\frac{p^{t+1}q^J}{(p/q)^{t-1+J}}\right)
=\Theta((q^2/p)^Jp^{-2})
\gg 1,
\]
which proves the second inequality.
 \end{proof}  
 
Next we check that (a) of Theorem~\ref{thm:stability} holds without equality.
Indeed it follows from \eqref{X*<XD2} that 
\begin{align*}
 X&=p\mu_p(\cA)+\frac1p\mu_p(\cB)=
X_{*} +p\mu_p(\cA\cap\cF^t_s)+\frac1p\mu_p(\cB\cap\cF^t_s)\\
&< X_{\Delta} -X_{*} +p\mu_p(\cA\cap\cF^t_s)+\frac1p\mu_p(\cB\cap\cF^t_s)\leq X_{\cF}.
\end{align*} 

Finally we verify (c) of Theorem~\ref{thm:stability}.

\begin{lemma}\label{lemma:ttj}
We have $\mu_p(\cA) \mu_p(\cB)< \mu_p(\cF^u_s)\mu_p(\cF^v_{s'})$
for all $t\geq t_0$, where $t_0$ depends on $r,\epsilon_1,\epsilon$.
\end{lemma}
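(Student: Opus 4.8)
The plan is to pin $\mu_p(\cA)$ and $\mu_p(\cB)$ down to within constant factors, then compare $\mu_p(\cA)\mu_p(\cB)$ with $\mu_p(\cF^u_s)\mu_p(\cF^v_{s'})$ via one algebraic identity together with the closeness bound already proved in this case. Throughout, $u=t-1$, $v=t+1$, $(s,s')\in\{(r+1,r),(r,r-1)\}$, and we may assume \eqref{eq:AB_large} holds (otherwise there is nothing to prove). We have already established \eqref{X*<XD2}, and what I will actually use is the sharper Claim~\ref{lem:ND-tech-CaseII}:
\[
 \tfrac1p\mu_p(\cB\setminus\cF^v_{s'})\ll p\,\mu_p(\cF^u_s\setminus\cA)
 \qquad\text{and}\qquad
 p\,\mu_p(\cA\setminus\cF^u_{s})\ll \tfrac1p\mu_p(\cF^v_{s'}\setminus\cB).
\]

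First I would record the coarse asymptotics. By Lemma~\ref{lemma:alpha^l bound}, $\mu_p(\cA)\le\alpha^u=O(p^{t-1})$ and $\mu_p(\cB)\le\alpha^v=O(p^{t+1})$. Feeding $\mu_p(\cA)=O(p^{t-1})$ into \eqref{eq:AB_large} and using $\mu_p(\cF^t_r)^2=\Theta(p^{2t})$ (from \eqref{weight cFtr2}) yields the matching lower bound, so $\mu_p(\cB)=\Theta(p^{t+1})$; and $\mu_p(\cF^u_s)=\mu_p(\cF^{t-1}_s)=\Theta(p^{t-1})$, as in \eqref{weight cFtr}. This last point is where, for $r=0$, the restricted range $p\ge\epsilon/t$ of the Setup is needed — it gives $tp=\Theta(1)$, hence $(tp)^s=\Theta(1)$ — which is why $t_0$ must be allowed to depend on $\epsilon$; for $r\ge1$ this is automatic from \eqref{eq:tp}.

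Now the identity. With $a_-:=\mu_p(\cF^u_s\setminus\cA)$, $a_+:=\mu_p(\cA\setminus\cF^u_s)$, $b_-:=\mu_p(\cF^v_{s'}\setminus\cB)$, $b_+:=\mu_p(\cB\setminus\cF^v_{s'})$, we have $\mu_p(\cA)=\mu_p(\cF^u_s)-a_-+a_+$ and $\mu_p(\cB)=\mu_p(\cF^v_{s'})-b_-+b_+$, and a direct expansion gives
\[
 \mu_p(\cF^u_s)\mu_p(\cF^v_{s'})-\mu_p(\cA)\mu_p(\cB)
 =\bigl[\mu_p(\cF^u_s)\,b_-+a_-\,\mu_p(\cB)\bigr]-\bigl[\mu_p(\cF^u_s)\,b_++a_+\,\mu_p(\cB)\bigr]
\]
(note that $\mu_p(\cF^v_{s'})$ has cancelled out). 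It suffices to show the second bracket is $o$ of the first, and this follows at once from Claim~\ref{lem:ND-tech-CaseII} and the coarse asymptotics: $\frac1p b_+\ll p\,a_-$ gives $\mu_p(\cF^u_s)b_+\ll p^2\mu_p(\cF^u_s)a_-$, and $p^2\mu_p(\cF^u_s)=\Theta(p^{t+1})=\Theta(\mu_p(\cB))$, so $\mu_p(\cF^u_s)b_+=o(a_-\mu_p(\cB))$; symmetrically $p\,a_+\ll\frac1p b_-$ gives $a_+\mu_p(\cB)=o(\mu_p(\cF^u_s)b_-)$. Hence, for $t\ge t_0$,
\[
 \mu_p(\cF^u_s)\mu_p(\cF^v_{s'})-\mu_p(\cA)\mu_p(\cB)
 =(1-o(1))\bigl[\mu_p(\cF^u_s)\,b_-+a_-\,\mu_p(\cB)\bigr]\ge0,
\]
with strict inequality unless $a_-=b_-=0$, i.e.\ $\cF^u_s\subseteq\cA$ and $\cF^v_{s'}\subseteq\cB$. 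But $\cF^u_s\subseteq\cA$ forces $D^u_s(i)\in\cA$ for every $i\le i_{\max}$, so $I=i_{\max}$; being in Case~II then forces $J\ne i_{\max}$, while $I=i_{\max}$ gives $\cB\subseteq\cF^v_{s'}$ by the Case~I argument, whence $\cB=\cF^v_{s'}\ni\widetilde{D}^v_{s'}(i_{\max})$ — contradicting $J\ne i_{\max}$. So $a_-=b_-=0$ is impossible in Case~II, and Lemma~\ref{lemma:ttj} follows.

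The only step requiring genuine care is the coarse-asymptotics paragraph, in particular getting $\mu_p(\cB)=\Theta(p^{t+1})$ out of \eqref{eq:AB_large} (so that \eqref{eq:AB_large} is really used) and $\mu_p(\cF^{t-1}_s)=\Theta(p^{t-1})$ for $r=0$. Once those are in hand, the identity plus the componentwise domination already packaged in Claim~\ref{lem:ND-tech-CaseII} close everything off, with no case analysis on the signs of $a_--a_+$ and $b_--b_+$.
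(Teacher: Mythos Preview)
Your proof is correct, and it takes a genuinely different route from the paper's.

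The paper works with the scaled quantities $f=p\mu_p(\cF^u_s)$, $g=\mu_p(\cF^v_{s'})/p$ and uses Claim~\ref{lem:ND-tech-CaseII} only in the weak form $a_*<\beta g_*$, $b_*<\beta f_*$ for a fixed constant $\beta$ (chosen to be $\epsilon/2$ for $r=0$ and anything less than $r/(r+1)$ for $r\ge1$). It then bounds $p\mu_p(\cA)<f-(f_*-\beta g_*)$, $\mu_p(\cB)/p<g-(g_*-\beta f_*)$ and shows the product is at most $fg$ by a case analysis on the signs of $f_*-\beta g_*$ and $g_*-\beta f_*$; the decisive step in the mixed-sign case is the explicit computation $f/g>\beta$, obtained directly from the asymptotics of the two $\cF$ families. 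In particular, the paper's argument never invokes \eqref{eq:AB_large} and proves the bare inequality of Lemma~\ref{lemma:ttj} under the Setup alone.

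Your argument replaces all of this with the single identity $FG-AB=[Fb_-+a_-B]-[Fb_++a_+B]$ and then kills the second bracket termwise using the full strength of Claim~\ref{lem:ND-tech-CaseII} together with $F=\Theta(p^{t-1})$ and $B=\Theta(p^{t+1})$. This is cleaner --- no sign cases, no computation of the constant $f/g$ --- but the lower bound $B=\Omega(p^{t+1})$ genuinely needs \eqref{eq:AB_large}, so you are proving a slightly weaker stand-alone statement. Since Lemma~\ref{lemma:ttj} is only ever applied under the hypothesis \eqref{eq:AB_large} of Theorem~\ref{thm:stability}, this costs nothing for the paper's purposes. Your handling of the boundary case $a_-=b_-=0$ (forcing $I=J=i_{\max}$, hence Case~I) is also correct and tidier than leaving it implicit.
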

\begin{proof}
  Let
  \newcommand{\f}{f}
  \newcommand{\g}{g}
  \newcommand{\fm}{f_*}
  \newcommand{\gm}{g_*}
  \newcommand{\am}{a_*}
  \newcommand{\bm}{b_*}
\begin{align*}
\f&=p\mu_p(\cF^u_s),  & \fm&=p\mu_p(\cF^u_s\setminus\cA), & \am&=p\mu_p(\cA\setminus\cF^u_s), \\
\g&=\mu_p(\cF^v_{s'})/p, &\gm&=\mu_p(\cF^v_{s'}\setminus\cB)/p,
 &\bm&=\mu_p(\cB\setminus\cF^v_{s'})/p.
\end{align*}
Let $\beta=\epsilon/2$ for $r=0$, and let
$\beta$ be any fixed constant satisfying $0<\beta<\frac r{r+1}$ 
for $r \geq 1$.
Claim~\ref{lem:ND-tech-CaseII} gives that $\bm<\beta\fm$ and 
$\am<\beta\gm$. So
\begin{align*}
p\mu_p(\cA)=\f-\fm+\am<\f-\fm+\beta\gm,\quad
\mu_p(\cB)/p=\g-\gm+\bm<\g-\gm+\beta\fm,
\end{align*}
and hence, 
$\mu_p(\cA)\mu_p(\cB)<(\f-\fm+\beta\gm)(\g-\gm+\beta\fm)$.
Therefore, in order to show Lemma~\ref{lemma:ttj}, it suffices to show that
\begin{equation}\label{eq:(f-a)(\g-b)<f\g}
 (\f-(\fm-\beta\gm))(\g-(\gm-\beta\fm))\leq \f\g. 
\end{equation}

If both $\fm-\beta\gm$ and $\gm-\beta\fm$ are non-negative, then we clearly
have \eqref{eq:(f-a)(\g-b)<f\g}.
If both of them are negative or $0$, then $\fm\leq \beta\gm\leq 
\beta^2\fm$. This implies $\fm=\gm=0$, and hence, we also get
\eqref{eq:(f-a)(\g-b)<f\g}. Thus we may assume that 
$(f_*-\beta g_*)(g_*-\beta f_*)< 0$, and
by symmetry, $f_*-\beta g_* < 0 < g_*-\beta f_*$.
Since $0<\beta<1$ we have $0<\beta f_*<\beta^{-1}f_*$ and
$g_*-\beta f_* > g_*-\beta^{-1}f_*$. 
Also we have
$0<\beta g_*-f_*=\beta(g_*-\beta^{-1}f_*)<g_*-\beta^{-1}f_*$.
Consequently we have $g_*-\beta f_* > g_*-\beta^{-1}f_*>0$ and 
\begin{equation}\label{eq:<beta}
 \frac{\beta\gm-\fm}{\gm-\beta\fm}
< \frac{\beta\gm-\fm}{\gm-\beta^{-1}\fm}=\beta.
\end{equation}

Since \eqref{eq:(f-a)(\g-b)<f\g} is rewritten as
$(f_*-\beta g_*)(g_*-\beta f_*)\leq f(g_*-\beta f_*)-g(\beta g_*-f_*)$ 
and we know that the LHS is negative, 
in order to prove \eqref{eq:(f-a)(\g-b)<f\g}
it suffices to show that the RHS is non-negative, that is,
\begin{equation}\label{f(g-bf)>g(bg-f)}
f(g_*-\beta f_*)\geq g(\beta g_*-f_*).
\end{equation}

On the other hand, \eqref{weight cFtr} implies that
\begin{align*}
\frac \f\g=
\frac{\binom{u+2s}sp^{u+s}q^s}{\binom{v+2s'}{s'}p^{v+s'}q^{s'}}p^2
(1+o(1))
=\frac{(u+2s)!(v+s')!s'!}{(v+2s')!(u+s)!s!}pq(1+o(1))
=\frac{t+s}{s}pq(1+o(1)).
\end{align*}
If $r\geq 1$, then using $u=t-1$, $v=t+1$, $s'=s-1$, 
and noting that $pq$ attains its minimum at $p=\frac r{t+2r-1}$,
we get
\[
 \frac fg=\frac{t+s}spq(1+o(1))
\geq \frac{t+r+1}{r+1}\frac r{t+2r-1}\frac{t+r-1}{t+2r-1}(1+o(1))
=\frac r{r+1}(1+o(1))>\beta.
\]
Similarly, if $r=0$, then $(s,s')=(1,0)$, that is, $s=1$, and
\[
 \frac fg=(t+1)pq(1+o(1))\geq (t+1)\frac{\epsilon}t
\left(1-\frac{\epsilon}t\right)(1+o(1))
=\epsilon(1+o(1))>\beta.
\]
In both cases we have $\frac fg>\beta$, and this together 
with \eqref{eq:<beta} yields \eqref{f(g-bf)>g(bg-f)} as needed.
\end{proof}

\section{Concluding remarks}\label{sec:conclude}

Recently, Ellis, Keller and Lifshitz \cite{EKL} obtained a sharp stability result for
$t$-intersecting families. The following is Theorem 1.10 of \cite{EKL} with only minimal
changes of notation. 
\begin{theorem}[\cite{EKL}]
For any $t \in \N$  and any $\xi >0$, there exists $C=C(t, \xi)>0$ such that the following holds.
 Let $\frac{r}{t + 2r -1} + \xi < p < \frac{r+1}{t+2r+1} - \xi$, and let $\epsilon >0$. If $\cF\subset 2^{[n]}$ is a $t$-intersecting family such that 
$\mu_p(\cF)\geq (1 -\epsilon)\mu_p(\cF_r^t)$, then there exists a family $\cG$ isomorphic to $\cF_r^t$
 such that $\mu_p(\cF\setminus\cG)\leq C\epsilon^{\log_{1-p}p}$. 
\end{theorem}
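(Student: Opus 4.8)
The plan is to follow the \emph{junta method} of Dinur--Friedgut, Keller--Lifshitz and Ellis--Keller--Lifshitz, and then to run a bootstrapping argument that improves the closeness to the stated power of $\epsilon$. The hypothesis that $p$ is bounded away from the endpoints $\tfrac r{t+2r-1}$ and $\tfrac{r+1}{t+2r+1}$ will be used in two places: it makes $\cF^t_r$ the \emph{unique} near-maximiser among $t$-intersecting families, so that the competitors $\cF^t_{r\pm1}$ play no role, and it keeps $q/p$ (with $q=1-p$) bounded away from $1$, which is what powers the bootstrapping.

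First I would establish a rough stability statement: there is an integer $j=j(t,\xi)$ and a $t$-intersecting $j$-junta $\mathcal J$, i.e.\ a family whose membership depends on at most $j$ coordinates, with $\mu_p(\cF\triangle\mathcal J)\le\eta(\epsilon)\,\mu_p(\cF^t_r)$, where $\eta(\epsilon)\to 0$ as $\epsilon\to 0$. The mechanism is by now standard: conditioning the indicator of a $t$-intersecting family on a bounded random seed of coordinates again gives the indicator of a $t'$-intersecting family with $t'\le t$, so feeding the $\mu_p$-hypercontractive input of Friedgut's junta theorem together with the fact that $\mu_p(\cF)$ is near the Ahlswede--Khachatrian maximum (Theorem~1) shows that $\cF$ is close to a junta, and a short additional argument makes that junta itself $t$-intersecting. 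A $t$-intersecting $j$-junta within $\eta(\epsilon)$ of $\mu_p(\cF^t_r)$ is governed by a finite optimisation on $\{0,1\}^j$, and by Theorem~1 and its uniqueness clause applied inside the seed it must be isomorphic to $\cF^t_r$. Relabelling coordinates, we may take the target to be $\cG=\cF^t_r$, so $\mu_p(\cF\setminus\cF^t_r)\le\eta(\epsilon)\,\mu_p(\cF^t_r)$.

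The heart of the proof is the bootstrapping step. Put $\cF^+=\cF\setminus\cF^t_r$ and $\cF^-=\cF^t_r\setminus\cF$; from $\mu_p(\cF)\ge(1-\epsilon)\mu_p(\cF^t_r)$ one gets $\mu_p(\cF^-)\le\mu_p(\cF^+)+\epsilon\,\mu_p(\cF^t_r)$, so it suffices to bound $\mu_p(\cF^+)$. Each $A\in\cF^+$ has $|A\cap[t+2r]|\le t+r-1$, and the requirement that $A$ meet every member of the large subfamily $\cF\cap\cF^t_r$ in at least $t$ elements forces a controlled chunk of $\cF^t_r$ out of $\cF$. The key is to replace $\cF^t_r$ by the isomorph $\cG$ \emph{minimising} $\mu_p(\cF\setminus\cG)$: this absorbs the ``free'' single-coordinate swaps, which merely re-centre the extremal family, so that every surviving $A\in\cF^+$ departs from $\cF^t_r$ by some ``depth'' $k\ge 2$ and hence forces a measure of $\cF^-$ which exceeds its own measure by a factor at least $(q/p)^{k-1}$. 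Summing over depths, with inclusion--exclusion bookkeeping for the overlap of constraints coming from different $A$'s, should yield an inequality of the shape $\mu_p(\cF^+)\le K\,\mu_p(\cF^-)^{\lambda}\,\mu_p(\cF^t_r)^{1-\lambda}$ with $\lambda=\log_{1-p}p$. Substituting $\mu_p(\cF^-)\le\mu_p(\cF^+)+\epsilon\,\mu_p(\cF^t_r)$: if $\mu_p(\cF^+)\le\epsilon\,\mu_p(\cF^t_r)$ this immediately gives $\mu_p(\cF^+)\le K2^{\lambda}\epsilon^{\lambda}\mu_p(\cF^t_r)$, while otherwise $\mu_p(\cF^-)\le 2\mu_p(\cF^+)$ forces $\mu_p(\cF^+)/\mu_p(\cF^t_r)$ past an absolute positive constant, contradicting the $\eta(\epsilon)$-closeness of the previous step once $\epsilon$ is small (here $\lambda>1$, which holds since $p$ is bounded below $1/2$). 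Either way we obtain the claim with $C=K2^{\lambda}$.

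The hard part will be pinning down the exponent \emph{exactly} as $\log_{1-p}p$ rather than $\lambda-o(1)$. One must isolate the genuinely cheapest admissible perturbation of $\cF^t_r$ relative to the best centring, and check that the value with $(1-p)^{\lambda}=p$ is optimal: the constraint in the inequality above binds as $k\to\infty$, and $\lambda=\log_{1-p}p$ is precisely the critical exponent for which deep but measure-efficient perturbations can all be controlled with one constant $K$. This is also what makes the bound sharp, as witnessed by the family obtained from $\cF^t_r$ by adjoining all sets containing a fixed clump of coordinates outside $[t+2r]$ (of size tuned to $\epsilon$) and deleting the members of $\cF^t_r$ that would then violate the $t$-intersection property.
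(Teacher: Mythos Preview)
This theorem is not proved in the paper at all. It appears only in Section~\ref{sec:conclude} (Concluding remarks), where it is quoted verbatim as ``Theorem 1.10 of \cite{EKL} with only minimal changes of notation'' --- a result of Ellis, Keller and Lifshitz cited for comparison with the authors' own Theorem~\ref{thm:weakstability}. The paper offers no argument for it whatsoever, so there is nothing here against which to compare your proposal.

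For what it is worth, your sketch is broadly in the spirit of the actual Ellis--Keller--Lifshitz proof (junta approximation plus a bootstrapping/isoperimetric step to sharpen the exponent), but several of the steps you describe are genuinely nontrivial and not obviously correct as stated. In particular, the inequality $\mu_p(\cF^+)\le K\,\mu_p(\cF^-)^{\lambda}\,\mu_p(\cF^t_r)^{1-\lambda}$ with $\lambda=\log_{1-p}p$ is the whole content of the theorem; asserting that ``summing over depths, with inclusion--exclusion bookkeeping'' yields it is where the real work lies, and the EKL argument uses a biased edge-isoperimetric inequality rather than the depth-summation picture you outline. But again, none of this can be assessed against the present paper, which simply does not contain a proof of this statement.
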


 Their result is related to 
Theorem~\ref{thm:weakstability} for the case $\cA=\cB$. 
To make a comparison easier we state a version of our 
Theorem~\ref{thm:weakstability}, which can be proved almost exactly as
Theorem~\ref{thm:weakstability} is proved.
\begin{theorem}
For every integer $r\geq 0$ and all real numbers $\epsilon \in (0,1/2)$ 
and $C>2$, there exists an integer $ t_0 = t_0 (r,\epsilon, C)$ such that
for all $n\geq t\geq  t_0 $ 
the following holds.
Let 
$\frac {r+\epsilon}{t}\leq  p\leq \frac {r+1-\epsilon}{t}$. 
If $\cA,\cB\subset 2^{[n]}$ are $t$-nice families such that 
$\sqrt{\mu_p (\cA)\mu_p (\cB)}\geq (1-\gamma)\mu_p (\cF^t_r)$ 
with $\gamma\in (0,\frac{\epsilon}{2(r+1)})$, then
\[
\mu_p (\cA\triangle\cF^t_r)+\mu_p (\cB\triangle\cF^t_r)\leq C\mu_p(\cF^t_r).
\]
In particular, if $\cA=\cB$ then
$\mu_p (\cA)\geq (1-\gamma)\mu_p (\cF^t_r)$ implies
$\mu_p (\cA\triangle\cF^t_r)\leq (C/2)\mu_p(\cF^t_r)$.
\end{theorem}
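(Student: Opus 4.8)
The plan is to reduce the statement to Theorem~\ref{thm:weakstability} by a cosmetic change of parameter; the arithmetic--geometric--mean bookkeeping does all the work, so there is essentially no new content. First I would set $\delta:=1-(1-\gamma)^2=2\gamma-\gamma^2$, and observe that the hypothesis $\sqrt{\mu_p(\cA)\mu_p(\cB)}\geq(1-\gamma)\mu_p(\cF^t_r)$ is literally equivalent to $\mu_p(\cA)\mu_p(\cB)\geq(1-\delta)\mu_p(\cF^t_r)^2$, which is exactly hypothesis \eqref{eq:AB_large(2)}. The only thing to verify is that $\delta$ lies in the admissible range $(0,\epsilon/(r+1))$ of Theorem~\ref{thm:weakstability}: since $0<\gamma<\frac{\epsilon}{2(r+1)}<1$ we get $0<\delta=2\gamma-\gamma^2<2\gamma<\frac{\epsilon}{r+1}$, and importantly this bound on $\delta$ is uniform in $\gamma$, so a single $t_0$ will serve.

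Next I would invoke Theorem~\ref{thm:weakstability} with the same constants $r,\epsilon,C$; this supplies $t_0=t_0(r,\epsilon,C)$ (not depending on $\gamma$, as it does not depend on $\delta$) together with the bound
\[
\mu_p(\cA\triangle\cF^t_r)+\mu_p(\cB\triangle\cF^t_r)\leq C\bigl(1-\sqrt{1-\delta}\bigr)\mu_p(\cF^t_r).
\]
Then I would simplify the factor using $0<\gamma<1$: $1-\sqrt{1-\delta}=1-\sqrt{(1-\gamma)^2}=\gamma$, so the right-hand side equals $C\gamma\,\mu_p(\cF^t_r)\leq C\mu_p(\cF^t_r)$, which is the asserted inequality (and in fact the slightly stronger one with an extra factor $\gamma$). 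For the ``in particular'' clause I would specialise to $\cA=\cB$, where $\sqrt{\mu_p(\cA)\mu_p(\cB)}=\mu_p(\cA)$, so $\mu_p(\cA)\geq(1-\gamma)\mu_p(\cF^t_r)$ is precisely the hypothesis, and the conclusion reads $2\mu_p(\cA\triangle\cF^t_r)\leq C\mu_p(\cF^t_r)$, i.e.\ $\mu_p(\cA\triangle\cF^t_r)\leq(C/2)\mu_p(\cF^t_r)$.

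Alternatively, one could mirror the proof of Theorem~\ref{thm:weakstability} directly from Theorem~\ref{thm:stability}: fix $\epsilon_1$ with $C=2/(1-2\epsilon_1)$, take $\delta_1=\epsilon/(r+1)$ (which satisfies $2\gamma<\delta_1<\frac{1}{r+2}$), apply Theorem~\ref{thm:stability} to get $(s,s')\in R_{\rm ex}$, and handle the three cases as before --- Cases~1 and~2 are excluded by Claims~\ref{claim:for corollary2b} and~\ref{claim:for corollary2} because their bounds $(1-\frac{\epsilon}{r+1})(1+o(1))$ drop below $(1-\gamma)^2\geq 1-2\gamma>1-\frac{\epsilon}{r+1}$ for large $t$, while in Case~3 ($s=s'=r$) one runs the inclusion--exclusion identity $X_{\Delta}=X_{\cF}+2X_{*}-X$ with $X\geq 2\sqrt{\mu_p(\cA)\mu_p(\cB)}\geq 2(1-\gamma)\mu_p(\cF^t_r)$ and part~(b) of Theorem~\ref{thm:stability} to land $X_{\Delta}\leq\frac{2\gamma}{1-2\epsilon_1}\mu_p(\cF^t_r)=C\gamma\,\mu_p(\cF^t_r)$. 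Either way there is no genuine obstacle: the one point needing care is uniformity in $\gamma$, guaranteed by the strict inequality $\gamma<\epsilon/(2(r+1))$, which forces the translated parameter $\delta<2\gamma$ (resp.\ $\delta_1$) to stay below the forbidden value $\epsilon/(r+1)$.
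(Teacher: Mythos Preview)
Your proposal is correct and matches the paper's intention: the paper does not give an explicit proof but simply remarks that the theorem ``can be proved almost exactly as Theorem~\ref{thm:weakstability} is proved,'' which is precisely your alternative route via Theorem~\ref{thm:stability}. Your primary route---directly invoking Theorem~\ref{thm:weakstability} after the substitution $\delta=2\gamma-\gamma^2$---is even cleaner and works because $t_0$ in Theorem~\ref{thm:weakstability} does not depend on $\delta$; either argument is fine.
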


Their setup is different from ours and it seems that
neither result implies the other. We should note that their results
apply to all (not necessarily shifted) $t$-intersecting families.
It would be very interesting to see whether one can use their technique to remove the
shiftedness condition from Theorem~\ref{thm:stability}.
See also \cite{EKL,Fri} for related stability results for intersecting
families.

\end{document}